\newcommand{\citecomment}[2][]{\citen{#2}#1\citevar}
\newcommand{\citeone}[1]{\citecomment{#1}}
\newcommand{\citetwo}[2][]{\citecomment[,~#1]{#2}}
\newcommand{\citevar}{\@ifnextchar\bgroup{;~\citeone}{\@ifnextchar[{;~\citetwo}{]}}}
\newcommand{\citefirst}{\@ifnextchar\bgroup{\citeone}{\@ifnextchar[{\citetwo}{]}}}
\newcommand{\cites}{[\citefirst}
\pgfplotsset{compat=1.17}
\newcommand{\defeq}{\colonequals}
\newcommand{\bx}{\mathbf{x}}
\newcommand{\bu}{\mathbf{u}}
\newcommand{\by}{\mathbf{y}}
\newcommand{\bH}{\mathbf{H}}
\newcommand{\bPhi}{\boldsymbol{\Phi}}
\newcommand{\R}{\mathbb{R}}
\newcommand{\C}{\mathcal C}
\newcommand{\prox}{\textnormal{prox}}
\newcommand{\bmat}[1]{\begin{bmatrix}#1\end{bmatrix}}
\newcommand{\sbmat}[1]{\left[\begin{smallmatrix}#1\end{smallmatrix}\right]}
\DeclareMathAlphabet{\mathpzc}{OT1}{pzc}{m}{it}
\definecolor{purple}{rgb}{0.3,0.0,.4}
\newcommand{\mnote}[1]{}
\newcommand{\snote}[1]{}
\newcommand{\LL}[1]{}
\newcommand{\titleparagraph}[1]{\paragraph{#1}}
\tikzset{
	box/.style={draw,
		minimum height=1.8em,
		text width=1.5em,
		align=center, font=\small}
}
\tikzset{
	box2/.style={draw,
		minimum height=1.8em,
		text width= 8em, rounded corners,
		align=center, font=\small}
}
\tikzset{
	box3/.style={draw,
		minimum height=1.8em,
		text width= 2.6em, rounded corners,
		align=center, font=\small}
}
\tikzset{
	box4/.style={draw,
		minimum height=1.8em,
		text width= 5.2em, rounded corners,
		align=center, font=\small}
}
\tikzset{
	box5/.style={draw,
		minimum height=4.2em,
		text width= 1.5em,
		align=center, font=\small}
}
\tikzset{
	box6/.style={draw,
		minimum height=2.8em,
		text width= 1.5em,
		align=center, font=\small}
}
\tikzset{
	box7/.style={draw,
		circle, minimum size = 1em,
		align=center, font = \large}
}
\tikzset{
	box8/.style = {draw,dashed,
		minimum height = 6.4em,
		text width = 3.4em}
}
\tikzset{
	box9/.style={draw,
		minimum height=2em,
		text width=2.6em,
		align=center, font=\small}
}
\tikzset{
	box10/.style={draw,
		minimum height=2em,
		text width= 2.6em, rounded corners,
		align=center, font=\small}
}
\tikzset{
	box11/.style={draw,
		minimum height=1.8em,
		text width= 5.2em,
		align=center, font=\small}
}
\newbox\Wrappedcontinuationbox
\newbox\Wrappedvisiblespacebox
\newcommand*\Wrappedvisiblespace {\textcolor{red}{\textvisiblespace}}
\newcommand*\Wrappedcontinuationsymbol {\textcolor{red}{\llap{\tiny$\m@th\hookrightarrow$}}}
\newcommand*\Wrappedcontinuationindent {3ex }
\newcommand*\Wrappedafterbreak {\kern\Wrappedcontinuationindent\copy\Wrappedcontinuationbox}
\newcommand*\Wrappedbreaksatspecials {%
	\def\PYGZus{\discretionary{\char`\_}{\Wrappedafterbreak}{\char`\_}}%
	\def\PYGZob{\discretionary{}{\Wrappedafterbreak\char`\{}{\char`\{}}%
	\def\PYGZcb{\discretionary{\char`\}}{\Wrappedafterbreak}{\char`\}}}%
	\def\PYGZca{\discretionary{\char`\^}{\Wrappedafterbreak}{\char`\^}}%
	\def\PYGZam{\discretionary{\char`\&}{\Wrappedafterbreak}{\char`\&}}%
	\def\PYGZlt{\discretionary{}{\Wrappedafterbreak\char`\<}{\char`\<}}%
	\def\PYGZgt{\discretionary{\char`\>}{\Wrappedafterbreak}{\char`\>}}%
	\def\PYGZsh{\discretionary{}{\Wrappedafterbreak\char`\#}{\char`\#}}%
	\def\PYGZpc{\discretionary{}{\Wrappedafterbreak\char`\%}{\char`\%}}%
	\def\PYGZdl{\discretionary{}{\Wrappedafterbreak\char`\$}{\char`\$}}%
	\def\PYGZhy{\discretionary{\char`\-}{\Wrappedafterbreak}{\char`\-}}%
	\def\PYGZsq{\discretionary{}{\Wrappedafterbreak\textquotesingle}{\textquotesingle}}%
	\def\PYGZdq{\discretionary{}{\Wrappedafterbreak\char`\"}{\char`\"}}%
	\def\PYGZti{\discretionary{\char`\~}{\Wrappedafterbreak}{\char`\~}}%
}
\newcommand*\Wrappedbreaksatpunct {%
	\lccode`\~`\.\lowercase{\def~}{\discretionary{\hbox{\char`\.}}{\Wrappedafterbreak}{\hbox{\char`\.}}}%
	\lccode`\~`\,\lowercase{\def~}{\discretionary{\hbox{\char`\,}}{\Wrappedafterbreak}{\hbox{\char`\,}}}%
	\lccode`\~`\;\lowercase{\def~}{\discretionary{\hbox{\char`\;}}{\Wrappedafterbreak}{\hbox{\char`\;}}}%
	\lccode`\~`\:\lowercase{\def~}{\discretionary{\hbox{\char`\:}}{\Wrappedafterbreak}{\hbox{\char`\:}}}%
	\lccode`\~`\?\lowercase{\def~}{\discretionary{\hbox{\char`\?}}{\Wrappedafterbreak}{\hbox{\char`\?}}}%
	\lccode`\~`\!\lowercase{\def~}{\discretionary{\hbox{\char`\!}}{\Wrappedafterbreak}{\hbox{\char`\!}}}%
	\lccode`\~`\/\lowercase{\def~}{\discretionary{\hbox{\char`\/}}{\Wrappedafterbreak}{\hbox{\char`\/}}}%
	\catcode`\.\active
	\catcode`\,\active
	\catcode`\;\active
	\catcode`\:\active
	\catcode`\?\active
	\catcode`\!\active
	\catcode`\/\active
	\lccode`\~`\~
}
\let\OriginalVerbatim=\Verbatim
\renewcommand{\Verbatim}[1][1]{%
	%\parskip\z@skip
	\sbox\Wrappedcontinuationbox {\Wrappedcontinuationsymbol}%
	\sbox\Wrappedvisiblespacebox {\FV@SetupFont\Wrappedvisiblespace}%
	\def\FancyVerbFormatLine ##1{\hsize\linewidth
		\vtop{\raggedright\hyphenpenalty\z@\exhyphenpenalty\z@
			\doublehyphendemerits\z@\finalhyphendemerits\z@
			\strut ##1\strut}%
	}%
        % If the linebreak is at a space, the latter will be displayed as visible
% space at end of first line, and a continuation symbol starts next line.
% Stretch/shrink are however usually zero for typewriter font.
\def\FV@Space {%
	\nobreak\hskip\z@ plus\fontdimen3\font minus\fontdimen4\font
	\discretionary{\copy\Wrappedvisiblespacebox}{\Wrappedafterbreak}
	{\kern\fontdimen2\font}%
}%

% Allow breaks at special characters using \PYG... macros.
\Wrappedbreaksatspecials
% Breaks at punctuation characters . , ; ? ! and / need catcode=\active
\OriginalVerbatim[#1,codes*=\Wrappedbreaksatpunct]%
}
\definecolor{cellbackground}{HTML}{F7F7F7}
\definecolor{cellborder}{HTML}{CFCFCF}
\def\PY@reset{\let\PY@it=\relax \let\PY@bf=\relax%
	\let\PY@ul=\relax \let\PY@tc=\relax%
	\let\PY@bc=\relax \let\PY@ff=\relax}
\def\PY@tok#1{\csname PY@tok@#1\endcsname}
\def\PY@toks#1+{\ifx\relax#1\empty\else%
	\PY@tok{#1}\expandafter\PY@toks\fi}
\def\PY@do#1{\PY@bc{\PY@tc{\PY@ul{%
				\PY@it{\PY@bf{\PY@ff{#1}}}}}}}
\def\PY#1#2{\PY@reset\PY@toks#1+\relax+\PY@do{#2}}
\def\csname PY@tok@w\endcsname{\def\PY@tc##1{\textcolor[rgb]{0.73,0.73,0.73}{##1}}}
\def\csname PY@tok@c\endcsname{\let\PY@it=\textit\def\PY@tc##1{\textcolor[rgb]{0.25,0.50,0.50}{##1}}}
\def\csname PY@tok@cp\endcsname{\def\PY@tc##1{\textcolor[rgb]{0.74,0.48,0.00}{##1}}}
\def\csname PY@tok@k\endcsname{\let\PY@bf=\textbf\def\PY@tc##1{\textcolor[rgb]{0.00,0.50,0.00}{##1}}}
\def\csname PY@tok@kp\endcsname{\def\PY@tc##1{\textcolor[rgb]{0.00,0.50,0.00}{##1}}}
\def\csname PY@tok@kt\endcsname{\def\PY@tc##1{\textcolor[rgb]{0.69,0.00,0.25}{##1}}}
\def\csname PY@tok@o\endcsname{\def\PY@tc##1{\textcolor[rgb]{0.40,0.40,0.40}{##1}}}
\def\csname PY@tok@ow\endcsname{\let\PY@bf=\textbf\def\PY@tc##1{\textcolor[rgb]{0.67,0.13,1.00}{##1}}}
\def\csname PY@tok@nb\endcsname{\def\PY@tc##1{\textcolor[rgb]{0.00,0.50,0.00}{##1}}}
\def\csname PY@tok@nf\endcsname{\def\PY@tc##1{\textcolor[rgb]{0.00,0.00,1.00}{##1}}}
\def\csname PY@tok@nc\endcsname{\let\PY@bf=\textbf\def\PY@tc##1{\textcolor[rgb]{0.00,0.00,1.00}{##1}}}
\def\csname PY@tok@nn\endcsname{\let\PY@bf=\textbf\def\PY@tc##1{\textcolor[rgb]{0.00,0.00,1.00}{##1}}}
\def\csname PY@tok@ne\endcsname{\let\PY@bf=\textbf\def\PY@tc##1{\textcolor[rgb]{0.82,0.25,0.23}{##1}}}
\def\csname PY@tok@nv\endcsname{\def\PY@tc##1{\textcolor[rgb]{0.10,0.09,0.49}{##1}}}
\def\csname PY@tok@no\endcsname{\def\PY@tc##1{\textcolor[rgb]{0.53,0.00,0.00}{##1}}}
\def\csname PY@tok@nl\endcsname{\def\PY@tc##1{\textcolor[rgb]{0.63,0.63,0.00}{##1}}}
\def\csname PY@tok@ni\endcsname{\let\PY@bf=\textbf\def\PY@tc##1{\textcolor[rgb]{0.60,0.60,0.60}{##1}}}
\def\csname PY@tok@na\endcsname{\def\PY@tc##1{\textcolor[rgb]{0.49,0.56,0.16}{##1}}}
\def\csname PY@tok@nt\endcsname{\let\PY@bf=\textbf\def\PY@tc##1{\textcolor[rgb]{0.00,0.50,0.00}{##1}}}
\def\csname PY@tok@nd\endcsname{\def\PY@tc##1{\textcolor[rgb]{0.67,0.13,1.00}{##1}}}
\def\csname PY@tok@s\endcsname{\def\PY@tc##1{\textcolor[rgb]{0.73,0.13,0.13}{##1}}}
\def\csname PY@tok@sd\endcsname{\let\PY@it=\textit\def\PY@tc##1{\textcolor[rgb]{0.73,0.13,0.13}{##1}}}
\def\csname PY@tok@si\endcsname{\let\PY@bf=\textbf\def\PY@tc##1{\textcolor[rgb]{0.73,0.40,0.53}{##1}}}
\def\csname PY@tok@se\endcsname{\let\PY@bf=\textbf\def\PY@tc##1{\textcolor[rgb]{0.73,0.40,0.13}{##1}}}
\def\csname PY@tok@sr\endcsname{\def\PY@tc##1{\textcolor[rgb]{0.73,0.40,0.53}{##1}}}
\def\csname PY@tok@ss\endcsname{\def\PY@tc##1{\textcolor[rgb]{0.10,0.09,0.49}{##1}}}
\def\csname PY@tok@sx\endcsname{\def\PY@tc##1{\textcolor[rgb]{0.00,0.50,0.00}{##1}}}
\def\csname PY@tok@m\endcsname{\def\PY@tc##1{\textcolor[rgb]{0.40,0.40,0.40}{##1}}}
\def\csname PY@tok@gh\endcsname{\let\PY@bf=\textbf\def\PY@tc##1{\textcolor[rgb]{0.00,0.00,0.50}{##1}}}
\def\csname PY@tok@gu\endcsname{\let\PY@bf=\textbf\def\PY@tc##1{\textcolor[rgb]{0.50,0.00,0.50}{##1}}}
\def\csname PY@tok@gd\endcsname{\def\PY@tc##1{\textcolor[rgb]{0.63,0.00,0.00}{##1}}}
\def\csname PY@tok@gi\endcsname{\def\PY@tc##1{\textcolor[rgb]{0.00,0.63,0.00}{##1}}}
\def\csname PY@tok@gr\endcsname{\def\PY@tc##1{\textcolor[rgb]{1.00,0.00,0.00}{##1}}}
\def\csname PY@tok@ge\endcsname{\let\PY@it=\textit}
\def\csname PY@tok@gs\endcsname{\let\PY@bf=\textbf}
\def\csname PY@tok@gp\endcsname{\let\PY@bf=\textbf\def\PY@tc##1{\textcolor[rgb]{0.00,0.00,0.50}{##1}}}
\def\csname PY@tok@go\endcsname{\def\PY@tc##1{\textcolor[rgb]{0.53,0.53,0.53}{##1}}}
\def\csname PY@tok@gt\endcsname{\def\PY@tc##1{\textcolor[rgb]{0.00,0.27,0.87}{##1}}}
\def\csname PY@tok@err\endcsname{\def\PY@bc##1{\setlength{\fboxsep}{0pt}\fcolorbox[rgb]{1.00,0.00,0.00}{1,1,1}{\strut ##1}}}
\def\csname PY@tok@kc\endcsname{\let\PY@bf=\textbf\def\PY@tc##1{\textcolor[rgb]{0.00,0.50,0.00}{##1}}}
\def\csname PY@tok@kd\endcsname{\let\PY@bf=\textbf\def\PY@tc##1{\textcolor[rgb]{0.00,0.50,0.00}{##1}}}
\def\csname PY@tok@kn\endcsname{\let\PY@bf=\textbf\def\PY@tc##1{\textcolor[rgb]{0.00,0.50,0.00}{##1}}}
\def\csname PY@tok@kr\endcsname{\let\PY@bf=\textbf\def\PY@tc##1{\textcolor[rgb]{0.00,0.50,0.00}{##1}}}
\def\csname PY@tok@bp\endcsname{\def\PY@tc##1{\textcolor[rgb]{0.00,0.50,0.00}{##1}}}
\def\csname PY@tok@fm\endcsname{\def\PY@tc##1{\textcolor[rgb]{0.00,0.00,1.00}{##1}}}
\def\csname PY@tok@vc\endcsname{\def\PY@tc##1{\textcolor[rgb]{0.10,0.09,0.49}{##1}}}
\def\csname PY@tok@vg\endcsname{\def\PY@tc##1{\textcolor[rgb]{0.10,0.09,0.49}{##1}}}
\def\csname PY@tok@vi\endcsname{\def\PY@tc##1{\textcolor[rgb]{0.10,0.09,0.49}{##1}}}
\def\csname PY@tok@vm\endcsname{\def\PY@tc##1{\textcolor[rgb]{0.10,0.09,0.49}{##1}}}
\def\csname PY@tok@sa\endcsname{\def\PY@tc##1{\textcolor[rgb]{0.73,0.13,0.13}{##1}}}
\def\csname PY@tok@sb\endcsname{\def\PY@tc##1{\textcolor[rgb]{0.73,0.13,0.13}{##1}}}
\def\csname PY@tok@sc\endcsname{\def\PY@tc##1{\textcolor[rgb]{0.73,0.13,0.13}{##1}}}
\def\csname PY@tok@dl\endcsname{\def\PY@tc##1{\textcolor[rgb]{0.73,0.13,0.13}{##1}}}
\def\csname PY@tok@s2\endcsname{\def\PY@tc##1{\textcolor[rgb]{0.73,0.13,0.13}{##1}}}
\def\csname PY@tok@sh\endcsname{\def\PY@tc##1{\textcolor[rgb]{0.73,0.13,0.13}{##1}}}
\def\csname PY@tok@s1\endcsname{\def\PY@tc##1{\textcolor[rgb]{0.73,0.13,0.13}{##1}}}
\def\csname PY@tok@mb\endcsname{\def\PY@tc##1{\textcolor[rgb]{0.40,0.40,0.40}{##1}}}
\def\csname PY@tok@mf\endcsname{\def\PY@tc##1{\textcolor[rgb]{0.40,0.40,0.40}{##1}}}
\def\csname PY@tok@mh\endcsname{\def\PY@tc##1{\textcolor[rgb]{0.40,0.40,0.40}{##1}}}
\def\csname PY@tok@mi\endcsname{\def\PY@tc##1{\textcolor[rgb]{0.40,0.40,0.40}{##1}}}
\def\csname PY@tok@il\endcsname{\def\PY@tc##1{\textcolor[rgb]{0.40,0.40,0.40}{##1}}}
\def\csname PY@tok@mo\endcsname{\def\PY@tc##1{\textcolor[rgb]{0.40,0.40,0.40}{##1}}}
\def\csname PY@tok@ch\endcsname{\let\PY@it=\textit\def\PY@tc##1{\textcolor[rgb]{0.25,0.50,0.50}{##1}}}
\def\csname PY@tok@cm\endcsname{\let\PY@it=\textit\def\PY@tc##1{\textcolor[rgb]{0.25,0.50,0.50}{##1}}}
\def\csname PY@tok@cpf\endcsname{\let\PY@it=\textit\def\PY@tc##1{\textcolor[rgb]{0.25,0.50,0.50}{##1}}}
\def\csname PY@tok@c1\endcsname{\let\PY@it=\textit\def\PY@tc##1{\textcolor[rgb]{0.25,0.50,0.50}{##1}}}
\def\csname PY@tok@cs\endcsname{\let\PY@it=\textit\def\PY@tc##1{\textcolor[rgb]{0.25,0.50,0.50}{##1}}}
\crefname{hypothesis}{Hypothesis}{Hypotheses}
\title{An automatic system to detect equivalence between iterative algorithms % for continuous optimization
}
\author{Shipu Zhao\thanks{Cornell University (\email{sz533@cornell.edu}, \email{udell@cornell.edu}).}
	\and Laurent Lessard\thanks{Northeastern University (\email{l.lessard@northeastern.edu}).} 
	\and Madeleine Udell\footnotemark[1]}
\definecolor{RED}{rgb}{1,0,0}\definecolor{BLUE}{rgb}{0,0,1} %DIF PREAMBLE
\begin{document}

\maketitle

\begin{abstract}
	When are two algorithms the same?
	How can we be sure a recently proposed algorithm is novel,
	and not a minor twist on an existing method?
	In this paper, we present a framework for reasoning about equivalence
	between a broad class of iterative algorithms,
	with a focus on algorithms designed for convex optimization.  % iterative algorithms for optimization
	We propose several notions of what it means for two algorithms to be equivalent,
	and provide computationally tractable means to detect equivalence.
	Our main definition, oracle equivalence, states that two algorithms
	are equivalent if they result in the same sequence of calls to the function oracles
	(for suitable initialization).
	Borrowing from control theory,
	we use state-space realizations to represent algorithms
	and characterize algorithm equivalence via transfer functions.
	Our framework can also identify and characterize some algorithm transformations
	including permutations of the update equations,
	repetition of the iteration,
	and conjugation of some of the function oracles in the algorithm.
	To support the paper, we have developed a software package named \lin{}
	that implements the framework to identify other
	iterative algorithms that are equivalent to an input algorithm.
	More broadly, this framework and software advances the goal of making mathematics searchable. 
\end{abstract}

\begin{keywords}
	optimization algorithm, algorithm equivalence, algorithm transformation. 
\end{keywords}

\section{Introduction}\label{intro}

Large-scale optimization problems in machine learning, signal processing, and imaging have fueled ongoing interest in iterative optimization algorithms.
New optimization algorithms are regularly proposed in order to
capture more complicated models, reduce computational burdens,
or obtain stronger performance and convergence guarantees.

However, the \emph{novelty} of an algorithm can be difficult to establish
because algorithms can be written in different equivalent forms.
For example, \cref{algo_i1} was originally proposed by Popov~\cite{popov1980modification}
in the context of solving saddle point problems.
This method was later generalized by Chiang et al.~\cite[\S4.1]{chiang2012online}
in the context of online optimization.
\Cref{algo_i2} is a reformulation of \cref{algo_i1}
adapted for use in generative adversarial networks (GANs)~\cite{gidel2018a}.
\Cref{algo_i3} is an adaptation of \emph{Optimistic Mirror Descent}~\cite{OMD_rakhlin}
used by Daskalakis et al.~\cite{daskalakis2018training} and also used to train GANs.
Finally, \cref{algo_i4} was proposed by Malitsky~\cite{malitsky2015projected}
for solving monotone variational inequality problems.
(Some of these algorithms were originally proposed in conjunction with
projections or other operations that make them more distinct.)
In all four algorithms, the vectors $x^k_1$ and $x^k_2$ are algorithm states,
$\eta$ is a tunable parameter,
and $F^k(\cdot)$ is the gradient of the loss function at time step $k$.
\mnote{check}

\vspace{-1em}
\noindent\hfil
\begin{minipage}[t]{0.45\textwidth}
	\begin{algorithm}[H]
		\centering
		\caption{(Modified Arrow--Hurwicz)}
		\label{algo_i1}
		\begin{algorithmic}
			\FOR{$k=1, 2,\dots$}
			\STATE{$x^{k+1}_1 = x^k_1 - \eta F^k(x^k_2)$}
			\STATE{$x^{k+1}_2 = x^{k+1}_1 - \eta F^k(x^k_2)$}
			\ENDFOR
		\end{algorithmic}
	\end{algorithm}
\end{minipage}
\hfil
\begin{minipage}[t]{0.45\textwidth}
	\begin{algorithm}[H]
		\centering
		\caption{(Extrapolation from the past)}
		\label{algo_i2}
		\begin{algorithmic}
			\FOR{$k=1, 2,\dots$}
			\STATE{$x^k_2 = x^k_1 - \eta F^{k-1}(x^{k-1}_2)$}
			\STATE{$x^{k+1}_1 = x^k_1 - \eta F^k(x^{k}_2)$}
			\ENDFOR
		\end{algorithmic}
	\end{algorithm}
\end{minipage}
\hfil

\noindent\hfill
\begin{minipage}[t]{0.45\textwidth}
	\begin{algorithm}[H]
		\centering
		\caption{(Optimistic Mirror Descent)}
		\label{algo_i3}
		\begin{algorithmic}
			\FOR{$k=1, 2,\dots$}
			\STATE{$x^{k+1}_2 =  x^k_2 - 2\eta F^k(x^k_2) + \eta F^{k-1}(x^{k-1}_2)$}
			\ENDFOR
		\end{algorithmic}
	\end{algorithm}
\end{minipage}
\hfill
\begin{minipage}[t]{0.45\textwidth}
	\begin{algorithm}[H]
		\centering
		\caption{(Reflected Gradient Method)}
		\label{algo_i4}
		\begin{algorithmic}
			\FOR{$k=1, 2,\dots$}
			\STATE{$x^{k+1}_1 =  x^k_1 - \eta F^k( 2x^k_1 - x^{k-1}_1 )$}
			\ENDFOR
		\end{algorithmic}
	\end{algorithm}
\end{minipage}
\hfill
\vspace{1em}

\Crefrange{algo_i1}{algo_i4} are equivalent in the sense that when suitably initialized,
the sequences $(x^k_1)_{k\ge 0}$ and $(x^k_2)_{k\ge 0}$ are identical for all four algorithms.\footnote{
	In their original formulations, \cref{algo_i1,algo_i2,algo_i4}
	included projections onto convex constraint sets.
	We assume an unconstrained setting here for illustrative purposes.
	Some of the equivalences no longer hold in the constrained case.}
Although these particular equivalences are not difficult to verify
and many have been explicitly pointed out in the literature,
for example in~\cite{gidel2018a}, algorithm equivalence is not always immediately apparent.

One famous example concerns the relations between
the Chambolle-Pock method, Douglas-Rachford splitting, and the alternating directions method of multipliers (ADMM):
indeed, showing the connection between Chambolle-Pock and Douglas-Rachford requires a full page of mathematics in \cite{chambolle2011first}.
In contrast, our analysis supports a single coherent view of these algorithms
that can be summarized in a commutative diagram (\cref{fig11}).

In this paper, we present a framework for reasoning about algorithm equivalence,
with the ultimate goal of making the analysis and design of algorithms more principled and streamlined.
This includes:
\begin{itemize}
	\item A universal way of representing algorithms, inspired by methods from control theory.
	% Specifically, we will use \emph{state-space realizations} and \emph{transfer functions}.
	\item Several definitions of what it means for algorithms to be equivalent.
	\item A computationally efficient way to verify whether two algorithms are equivalent.
\end{itemize}
Briefly, our method is to parse each algorithm to a standard form
as a linear system in feedback with a nonlinearity;
to compute the transfer function of each linear system;
and to check, using a computer algebra system,
if there are parameter values that make the transfer functions equal.

We must point out a tension in our terminology:
the notion of algorithm equivalence we define below is rather broad,
which is in order to discover interesting connections between algorithms.
As a consequence, equivalent algorithms (in our terminology)
can nevertheless be extremely useful for different tasks: for example,
writing one algorithm in different ways can yield different generalizations,
different interpretations,
different computational complexity,
and different numerical stability.
On the other hand, equivalent algorithms will share many properties,
such as convergence, stability, and fixed points.

We also present a software package implementing this framework named \lin{}\footnote{
	Named after Carl Linnaeus, a botanist and zoologist who invented the modern system of naming organisms.},
for the classification and taxonomy of iterative algorithms.
The software is a search engine, where the input is an algorithm described using natural syntax,
and the output is a canonical form for the algorithm along with any known names
and pointers to relevant literature.
The approach described in this paper allows \lin{} to search over
first-order optimization algorithms
such as gradient descent with acceleration,
ADMM,
and the extragradient method. % linear time invariant
As the database in \lin{} grows,
it will help algorithm researchers understand
and efficiently discover connections between algorithms.
% it will become an invaluable resource for algorithm researchers across various fields.
More generally, \lin{} advances the goal of making mathematics searchable.

This paper is organized as follows.
In \cref{relatedwork}, we briefly summarize existing literature related to our work.
In \cref{example}, we introduce three examples of equivalent algorithms that motivate our framework.
In \cref{preliminary}, we briefly review important background on linear systems and optimization
used throughout the paper.
We formally define two notions of algorithm equivalence,
\emph{oracle equivalence} and \emph{shift equivalence},
in \cref{equivalence} and discuss how to characterize them
via transfer functions in \cref{charac-oracle,charac-shift}.
Certain transformations can also be identified and characterized with our
framework including \emph{algorithm repetition}, repeating an algorithm multiple times,
and \emph{conjugation}, a transformation using conjugate function oracles.
These are discussed in \cref{repe,conjugation} respectively.
In \cref{package}, we briefly introduce our
software package \lin{} for the classification of iterative algorithms.

\section{Related work}\label{relatedwork}
%
% Ultimately, our primary idea is to take algorithms into standard form, then
% standardize the procedures of algorithm analysis,
% and even make it applicable and searchable with computer software.

A variety of existing work advances the goal of making mathematics searchable.
This work is too diverse to survey here.
As an example, consider the On-Line Encyclopedia of Integer Sequences:
given a sub-sequence or a keyword, the encyclopedia will find a matching sequence
and return useful information such as mathematical motivation for the sequence and links to other literature \cite{integer-search}.
As a very different example, recent work in deep learning has led to new language models,
such as GPT3, that can generate code snippets, including
machine learning models, javascript applications, and SQL queries \cite{NEURIPS2020_1457c0d6, gpt3sandbox, openai-api, gpt3news}.
As these models are trained from large corpuses of data, we might view such models
as implementing a generalized search.

Within the optimization literature, several standard forms have been proposed to represent
problems and algorithms.
For example, the CVX* modeling languages represent (disciplined) convex optimization problems in a
standard conic form,
building up the representations of complex problems
from a few basic functions and a small set of composition rules \cite{cvx, gb08, udell2014convex, diamond2016cvxpy, shen2017disciplined}.
% Convex functions and sets are built up from a small set of rules from convex analysis with a base library.
% Constraints and objectives are expressed using these rules, automatically transformed to a canonical form, and solved
This paper builds on a foundation developed by Lessard et al. \cite{doi:10.1137/15M1009597}
that represents first-order algorithms as linear systems in feedback with a nonlinearity.
Lessard et al.\ use this representation to analyze convergence properties of an algorithm with integral quadratic constraints.
Our work extends theirs with the insight that such representations can be computed automatically
by a computer.

There are rich connections between many first-order methods for convex optimization.
These algorithms are surveyed in a recent textbook by Ryu and Yin,
which summarizes and unifies several operator splitting methods for convex optimization \cite{ryuyinconvex}.
Many of these connections are well known to experts, but the connections have
traditionally been complex to explain, communicate, or even remember.
For example, Boyd et al.~\cite{MAL-016} write,
``There are also a number of other algorithms distinct from but inspired
by ADMM. For instance, Fukushima \cite{applicationadmm} applies ADMM to a dual
problem formulation, yielding a `dual ADMM' algorithm, which is
shown in \cite{reformulationadmm} to be equivalent to the `primal Douglas-Rachford' method
discussed in \cites[\S3.5.6]{phdthesis}.''
As another example, Chambolle and Pock in~\cite{chambolle2011first}
propose a new primal-dual splitting algorithm and demonstrate
that transformations of their algorithm can yield
Douglas-Rachford splitting and ADMM, using a full page of mathematics
to sketch the connection.
% However, it is complicated to transform one algorithm to another among Chambolle-Pock method, Douglas-Rachford splitting, and ADMM
% and it is not aware of if the algorithms can be transformed back.
Using our framework, the (many!) relations between
the Chambolle-Pock method, Douglas-Rachford splitting, and ADMM
can be established precisely and conveyed efficiently in a commutative diagram;
see \cref{conjugation} and \cref{fig11} in particular.

\section{Motivating examples}\label{example}
To explain what we mean by algorithm equivalence,
we introduce three motivating examples in this section.
Each provides a different view of how two algorithms might be equivalent.

%\vspace{-1em}
\noindent
\hfil
\begin{minipage}{0.46\textwidth}
	\begin{algorithm}[H]
		\centering
		\caption{}
		\label{algo1}
		\begin{algorithmic}
			\FOR{$k=0, 1, 2,\ldots$}
			\STATE{$x^{k+1}_1 = 2x^k_1 - x^k_2 - \frac{1}{10} \nabla f(2x^k_1 - x^k_2)$}
			\STATE{$x^{k+1}_2 = x^k_1$}
			\ENDFOR
		\end{algorithmic}
	\end{algorithm}
\end{minipage}
\hfil
\begin{minipage}{0.46\textwidth}
	\begin{algorithm}[H]
		\centering
		\caption{}
		\label{algo2}
		\begin{algorithmic}
			\FOR{$k=0, 1, 2,\ldots$}
			\STATE{$\xi^{k+1}_1 = \xi^k_1 - \xi^k_2 - \frac{1}{5} \nabla f(\xi^k_1)$}
			\STATE{$\xi^{k+1}_2 = \xi^k_2 + \frac{1}{10} \nabla f(\xi^k_1)$}
			\ENDFOR
		\end{algorithmic}
	\end{algorithm}
\end{minipage}
\hfil
\vspace{1em}

The first example consists of \cref{algo1,algo2}.
These algorithms are equivalent in a strong sense:
when suitably initialized,
we may transform the iterates of \cref{algo1} by the invertible linear map
$\xi^k_1 = 2x^k_1 - x^k_2, \xi^k_2 = - x^k_1+x^k_2$
to yield the iterates of \cref{algo2}.
% both algorithms will generate identical $(\hat{x}^k_1)_{k\ge 0}$ and $(\hat{x}^k_2)_{k\ge 0}$ sequences
% with suitable initialization.
We say that the sequences $(x^k_1)_{k\ge 0}$ and $(x^k_2)_{k\ge 0}$ are
\emph{equivalent}
to sequences $(\xi^k_1)_{k\ge 0}$ and $(\xi^k_2)_{k\ge 0}$
\emph{up to an invertible linear transformation}.

\vspace{-1em}
\noindent
\hfil
\begin{minipage}[t]{0.46\textwidth}
	\begin{algorithm}[H]
		\centering
		\caption{}
		\label{algo3}
		\begin{algorithmic}
			\FOR{$k=0, 1, 2,\ldots$}
			\STATE{${x}^{k+1}_1 = 3{x}^k_1 - 2x^k_2 + \frac{1}{5} \nabla f(-x^k_1 + 2x^k_2)$}
			\STATE{$x^{k+1}_2 = x^k_1$}
			\ENDFOR
		\end{algorithmic}
	\end{algorithm}
\end{minipage}
\hfil
\begin{minipage}[t]{0.46\textwidth}
	\begin{algorithm}[H]
		\centering
		\caption{}
		\label{algo4}
		\begin{algorithmic}
			\FOR{$k=0, 1, 2,\ldots$}
			\STATE{$\xi^{k+1} = \xi^k - \frac{1}{5} \nabla f(\xi^k)$}
			\ENDFOR
		\end{algorithmic}
	\end{algorithm}
\end{minipage}
\hfil
\vspace{1em}

The second example consists of \cref{algo3,algo4}.
These algorithms do not even have the same number of state variables,
so these algorithms are \emph{not} equivalent up to an invertible linear transformation.
But when suitably initialized,
we may transform the iterates of \cref{algo3} by the linear map
$\xi^k = -x^k_1 +2 x^{k}_2$
to yield the iterates of \cref{algo4}.
% both algorithms will generate identical $(\hat{x}^k_1)_{k\ge 0}$ and $(\hat{x}^k_2)_{k\ge 0}$ sequences
% with suitable initialization.
This transformation is linear but not invertible.
Instead, notice that the sequence of calls to the gradient oracle are identical:
the algorithms satisfy \emph{oracle equivalence},
a notion we will define formally later in this paper.

\vspace{-1em}
\noindent
\hfil
\begin{minipage}[t]{0.46\textwidth}
	\begin{algorithm}[H]
		\centering
		\caption{}
		\label{algo5}
		\begin{algorithmic}
			\FOR{$k=0, 1, 2,\ldots$}
			\STATE{$x^{k+1}_1 = \textnormal{prox}_{f}(x^k_3)$}
			\STATE{$x^{k+1}_2 = \textnormal{prox}_{g}(2x^{k+1}_1 - x^k_3)$}
			\STATE{$x^{k+1}_3 = x^k_3 + x^{k+1}_2 - x^{k+1}_1$}
			\ENDFOR
		\end{algorithmic}
	\end{algorithm}
\end{minipage}
\hfil
\begin{minipage}[t]{0.46\textwidth}
	\begin{algorithm}[H]
		\centering
		\caption{}
		\label{algo6}
		\begin{algorithmic}
			\FOR{$k=0, 1, 2,\ldots$}
			\STATE{$\xi^{k+1}_1 = \textnormal{prox}_{g}(- \xi^k_1 + 2\xi^k_2) + \xi^k_1 - \xi^k_2$}
			\STATE{$\xi^{k+1}_2 = \textnormal{prox}_{f}(\xi^{k+1}_1)$}
			\ENDFOR
		\end{algorithmic}
	\end{algorithm}
\end{minipage}
\hfil
\vspace{1em}

The third example consists of \cref{algo5,algo6}.
With suitable initialization,
they will generate the same sequence of
calls to the proximal operator, ignoring the very first call to one of the oracles.
Specifically, \cref{algo6} is initialized as $\xi^0_1 = x^0_3$, $\xi^0_2 = x^1_1$
and the first call to $\text{prox}_f$ in \cref{algo5} is ignored.
We will say they are equivalent up to a prefix or shift: they satisfy \emph{shift equivalence}.

Generalizing from these motivating examples,
we will call algorithms equivalent when they generate an identical sequence
(e.g., of states or oracle calls) up to some transformations,
with suitable initialization.
%The trajectory can refer to the sequence of variable or the sequence of gradient or proximal operator.
To make our ideas formal, we need a few definitions and some ideas from control theory.
We will then revisit those motivating examples and define algorithm equivalence.
%\LL{Should this informal definition require that the oracles be the same in order for the algorithms to be compared? Otherwise, I'm not sure what the phrase ``identical sequence of oracle calls'' means. Or are we trying to write this broadly enough so that conjugate algorithms are still considered equivalent?}

\section{Preliminaries}\label{preliminary}

We let $\R^n$ denote the standard Euclidean space of $n$-dimensional vectors, and
use boldface lowercase symbols denote semi-infinite sequences of vectors,
which we index using superscripts.
For example, we may write $\bx \defeq (x^0, x^1, \dots)$,
where $x^k \in \R^n$ for each $k\geq 0$.
Subscripts index components or subvectors:
for example, we may write $x = \sbmat{x_1 \\ x_2} \in \R^n$,
where $x_1\in\R^{n_1}$ and $x_2\in\R^{n - n_1}$.

\subsection{Optimization}\label{opt}

\titleparagraph{Optimization problem, objective, and constraints}
An optimization problem is identified by an objective function and a constraint set.
The objective may be written as the sum of several functions, and the constraint
set may be the intersection of several sets.
As an example, in the optimization problem \cref{eqp1}~\cite{MAL-016}
\beq \label{eqp1}
\ba{ll}
\mbox{minimize} & f(x) + g(z) \\
\mbox{subject to} & Ax + Bz = c,
\ea
\eeq
the objective function is $f(x)+g(z)$ and the constraint set is $\{(x,z): Ax + Bz = c\}$.

\titleparagraph{Oracles}
We assume an oracle model of optimization:
we can only access an optimization problem
by querying oracles
at discrete query points~\cites[\S4]{boyd_vandenberghe_2004}[\S1]{MAL-050}[\S1]{nesterov2018lectures}.
Oracles might include the gradient or proximal operator of a function,
or projection onto a constraint set~\cites[\S6]{doi:10.1137/1.9781611974997}[\S2]{fenchel1953convex}[\S1]{OPT-003}.
Each query to the oracle returns an output such as
the function value, gradient, or proximal operator.
For example, the oracles for problem \cref{eqp1} might include the
gradients or proximal operators of $f$ and $g$,
and projection onto the hyperplane $\{(x,z): Ax + Bz = c\}$.

\subsection{Algorithms}\label{algorithm}

Detecting equivalence between \emph{any} pair of algorithms is beyond the scope of this paper.
Instead, we restrict our attention to equivalence
between iterative linear time invariant optimization algorithms.
In the following section, we provide some intuition and define each of these terms.
Further formalism of these terms will be provided
in the next subsection on control theory.

\titleparagraph{Iterative algorithms}
Given an optimization problem and an initial point $x^0 \in \mathcal{X}$,
an \emph{iterative algorithm} $\mathcal{A}$ generates a sequence of points
$\bx \defeq (x^k)_{k \geq 0}$
by repeated application of the map $\mathcal{A}: \mathcal X \to \mathcal X$.
(We do not distinguish the algorithm from its associated map.)
Hence, $x^{k+1} = \mathcal{A}(x^k)$ for  $k \geq 0$.
We call $x^k$ the \emph{state} of the algorithm at \emph{time} $k$.
We make two important simplifying assumptions when treating algorithms.

First, suppose the operator $\mathcal{A}$ calls each different oracle \emph{exactly once}.
(We will see how to extend our ideas to more complex algorithms later.)
This assumption forbids trivial repetition,
such as $\mathcal{A}' \defeq \mathcal{A} \circ \mathcal{A}$.
Second, we consider algorithms that are \emph{time-invariant}.
In general, one could envision an algorithm $\mathcal{A}^k$ that changes at each timestep.
Such time-varying algorithms are common in practice:
for example, gradient-based methods with diminishing stepsizes.
% Time-varying components add robustness against a poorly chosen initial point $x^0$.
% For example, smaller stepsizes may be more effective
% when $x^k$ is closer to the optimal point, prompting the use of a diminishing stepsize.
We view time-varying algorithms as a scheme for switching between
different time-invariant algorithms.
Since our aim is to reason about algorithm equivalence,
we restrict our attention to time-invariant algorithms.
A nice benefit of this restriction is
that we can define algorithm equivalence independently of the choice of initial point.

The formulation $x^{k+1} = \mathcal{A}(x^k)$ is general enough
to include algorithms with multiple timesteps.
For example consider \cref{algo_i4}:
$x_1^{k+1} = x_1^k - \eta F (2 x_1^k - x_1^{k-1})$.
If we define the new state $x_2^k \defeq x_1^{k-1}$
and let $x^k \defeq \sbmat{x_1^k \\ x_2^k}$,
then we may rewrite the algorithm as
\begin{equation}\label{eq:algdemo}
	x^{k+1}
	= \bmat{x_1^{k+1} \\ x_2^{k+1} }
	= \bmat{x_1^k - \eta F( 2x_1^k - x_2^k) \\ x_1^k}
	= \mathcal{A}\left( \bmat{x_1^k\\x_2^k} \right)
	= \mathcal{A}(x^k).
\end{equation}
% The iterative algorithms we consider make a call to an oracle or oracles
% at each iteration. We refer to the sequence of oracles called and their arguments
% as the \emph{oracle sequence}.

The algorithm $\mathcal{A}$ contains a combination of oracle calls and state updates.
Define $y^k$ and $u^k$ to be the input and output of the oracles called at time $k$, respectively.
Now, write three separate equations for the state update, oracle input, and oracle output.
Applying this to \cref{eq:algdemo}, we obtain:
\begin{subequations}\label{eq:algdemo0}
	\begin{align}
		\bmat{x_1^{k+1} \\ x_2^{k+1}} &= \bmat{ 1 & 0 \\ 1 & 0} \bmat{x_1^k \\ x_2^k} + \bmat{-\eta \\ 0} u^k  \hspace{-1cm}&&\hspace{-1cm}  \text{(state update)},\\
		y^k &= \bmat{2 & -1} \bmat{x_1^k \\ x_2^k} \hspace{-1cm}&&\hspace{-1cm} \text{(oracle input)}, \\
		u^k &= F(y^k) \hspace{-1cm}&&\hspace{-1cm} \text{(oracle output)}.
	\end{align}
\end{subequations}

\titleparagraph{Oracle sequence}

We have defined an algorithm $\mathcal{A}$ as a map $\mathcal X \to \mathcal X$.
In optimization, it is also conventional to write
an algorithm as a sequence of update equations,
that are executed sequentially on a computer to implement the map.
When this sequence of updates is executed, we may record the
sequence of states or the sequence of oracle calls (oracle and its input pairs),
which we call the oracle sequence.
There may be several ways of writing the algorithm as a sequence of updates,
which may produce different state sequences or oracle sequences.
We are not aware of any practical algorithm for optimization
that may be written to produce two different oracle sequences.
Hence we will assume for now that the oracle sequence produced by an algorithm
is unique.
\footnote{
	This assumption eliminates the possibility that some oracles
	may be permuted %(in a non-cyclic permutation)
	without changing the state sequence:
	e.g.,
	
	\vspace{-1em}
	\noindent\hfil
	\begin{minipage}[t]{0.42\textwidth}
		\begin{algorithm}[H]
			\centering
			\caption{}
			\begin{algorithmic}
				\FOR{$k=1, 2,\dots$}
				\STATE{$x^{k+1}_1 = \mathcal A_1(x^k_1)$}
				\STATE{$x^{k+1}_2 = \mathcal A_2(x^k_2)$}
				\STATE{$x^{k+1}_3 = \mathcal A_3(x^{k+1}_1, x^{k+1}_2)$}
				\ENDFOR
			\end{algorithmic}
		\end{algorithm}
	\end{minipage}
	\hfil
	\begin{minipage}[t]{0.42\textwidth}
		\begin{algorithm}[H]
			\centering
			\caption{}
			\begin{algorithmic}
				\FOR{$k=1, 2,\dots$}
				\STATE{$x^{k+1}_2 = \mathcal A_2(x^k_2)$}
				\STATE{$x^{k+1}_1 = \mathcal A_1(x^k_1)$}
				\STATE{$x^{k+1}_3 = \mathcal A_3(x^{k+1}_1, x^{k+1}_2)$}
				\ENDFOR
			\end{algorithmic}
		\end{algorithm}
	\end{minipage}
	\hfil
	\vspace{1em}
	
	\noindent Here, the algorithm may be equally well written with the oracle sequence $(\mathcal A_1, \mathcal A_2, \mathcal A_3)$
	as with the oracle sequence $(\mathcal A_2, \mathcal A_1, \mathcal A_3)$.
	But again, we are not aware of any concrete examples of optimization algorithms
	with this structure.
}
We will revisit this assumption later in the paper (\cref{charac-shift}) to see
how our ideas extend to more complex (not-yet-discovered) algorithms.
% Our definition of oracle sequence and oracle equivalence depends on this sequence of update equations.
% We will see later in the paper that the tools we develop here suffice to identify
% two algorithms that perform the same state update X->X even if they have been written
% using different update equations.

\titleparagraph{Linear algorithms}

The equations \cref{eq:algdemo0} have the general \emph{linear} form
\begin{subequations}\label{eqp2}
	\begin{align}
		x^{k+1}  & = Ax^k + Bu^k, \label{eqp2a}\\
		y^k & = Cx^k + Du^k, \label{eqp2b} \\
		u^k & = \phi (y^k) \label{eqp2c}.
	\end{align}
\end{subequations}
We say that a time-invariant algorithm is \emph{linear} if it can be written
in the form of \cref{eqp2}, where $x^k$ is the algorithm state and $\phi$ is the set of oracles.
Here $\phi$ can be any nonlinear map,
including a map with internal state.
For example, the oracle $\phi$ corresponding to the subgradient $\partial f$ of a
nondifferentiable function $f$ might make a choice to ensure
the output is unique and consistent, for example,
by selecting the subgradient of minimum norm;
the oracle $\phi$ corresponding to a stochastic gradient might
include an internal random seed that
ensures the output is unique and deterministic, given the seed.
% Except this, there is no restriction on $\phi$ for our framework.

% The intuition to represent an algorithm as recursive equations
% is to separate the linear and nonlinear parts of the algorithm.
% Linear equations \cref{eqp2a} and \cref{eqp2b} represent the linear part,
% while equation \cref{eqp2c} is the nonlinear part.
% Nonlinear map $\phi$ in \cref{eqp2c} represents the oracles in the algorithm,
% such as the gradient or subgradient of a convex function.
% We call the algorithm time-invariant,
% if matrices $(A, B, C, D)$ are invariant with respect to time $k$.
% In fact, as we will show in the next subsection, we represent algorithms
% as linear systems connected in feedback with nonlinearities~\cite{hu2020analysis, doi:10.1137/15M1009597}.
% The linear system operates on state variables $x^k$, inputs $u^k$, and outputs $y^k$.
% Nonlinearity $\phi$ maps
% the inputs $u^k$ to the outputs $y^k$ by the rule $u^k = \phi (y^k)$.
In the rest of the paper, unless specifically noted,
our discussion is limited to linear algorithms.
We will see that the class of linear algorithms includes commonly used algorithms,
such as accelerated methods, proximal methods, operator splitting methods, and more \cite{hu2020analysis, doi:10.1137/15M1009597}.

% As an example, we can represent \cref{algo1} in this form as
% \mnote{how to distinguish between algo as input-output map (represented by minimal realization)
% 	and algorithm as written?}\snote{need help}
% \begin{displaymath}
% 	\begin{aligned}
% 	\left[\begin{array}{c}
% 		x^{k+1}_1 \\
% 		x^{k+1}_2
% 	\end{array}\right]
%  & = \left[\begin{array}{c c}
%  		2 & -1 \\
%  		1 & 0
%  		\end{array}\right]
%  	\left[\begin{array}{c}
%  		x^{k}_1 \\
%  		x^{k}_2
%  	\end{array}\right] +
% \left[\begin{array}{c}
% 	-\frac{1}{10} \\ 0
% 	\end{array}\right] \nabla f(2x^k_1 - x^k_2) \\
% 2x^k_1 - x^k_2 & = \left[\begin{array}{cc} 2 & -1 \end{array}\right]
% \left[\begin{array}{c}
% 	x^{k}_1 \\
% 	x^{k}_2
% \end{array}\right] + \left[\begin{array}{c} 0 \end{array}\right] \nabla f(2x^k_1 - x^k_2) \\
% \nabla f(2x^k_1 - x^k_2) & = \nabla f(2x^k_1 - x^k_2)
% \end{aligned}.
% \end{displaymath}
% For each time step $k$, the state is $(x^k_1, x^k_2)$, % column vector
% the input is $\nabla f(2x^k_1 - x^k_2)$,
% and the output is $2x^k_1 - x^k_2$.
% The nonlinearity is $\nabla f$, the gradient of function $f$.

The general form \cref{eqp2} represents a convenient parameterization
of linear algorithms in terms of matrices $(A,B,C,D)$,
but it is only a starting point.
For example, \crefrange{algo_i1}{algo_i4} have different $(A,B,C,D)$ parameters
despite being equivalent algorithms.
In the next section, we show how tools from control theory can be brought
to bear on these sorts of representations.

\titleparagraph{Remark}
For an arbitrary state-space realization $(A, B, C, D)$,
the corresponding algorithmic sequence may not exist or may not be unique.
However, any implementable practical algorithm, written as a sequence of update equations
has a corresponding algorithmic sequence that exists and is unique:
it is obtained by performing the steps indicated in the update equations
and recording the values of $x$, $u$, and $y$.

\subsection{Control theory}\label{control}
This subsection provides a brief overview of relevant methods and terminology from control theory.
More detail can be found in standard references such as \cite[Ch.~1--3]{antsaklis2006linear}
and \cite[Ch.~1,2,5]{williams2007linear}.

\titleparagraph{Algorithms as linear systems}
Let $\bu$ denote the entire sequence of $u^k$ and
$\by$ denote the entire sequence of $y^k$.
The equations in \cref{eqp2} can be separated into two parts.
Equations \cref{eqp2a} and \cref{eqp2b} define a map $\bH$
from $\bu$ to $\by$ compactly as $\by = \bH\bu$,
while \cref{eqp2c} defines a map $\bPhi$ from $\by$ to $\bu$ as $\bu = \bPhi\by$, where $\bPhi = \mathrm{diag}\{\phi,\phi,\dots\}$.
We can represent these algebraic relations visually via the \emph{block-diagram} shown in \cref{figp1}.

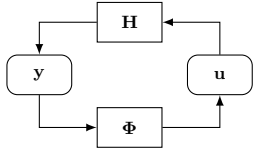
\begin{figure}[tbhp]
	\centering
	\begin{tikzpicture}[>=latex]
		
		\node[scale = 0.75][box9] at (0, 0) (algo) {$\bH$};
		\node[scale = 0.75][box9] at (0, -1.4) (oracle) {$\bPhi$};
		\node[scale = 0.75][box10] at (1.2, -0.7) (input) {$\bu$};
		\node[scale = 0.75][box10] at (-1.2, -0.7) (output) {$\by$};
		
		\draw[->]  (algo) -| (output);
		\draw[->]  (output) |- (oracle);
		\draw[->]   (oracle) -| (input);
		\draw[->]  (input) |- (algo);
		
	\end{tikzpicture}
	\caption{Block-diagram representation of an algorithm. This is equivalent to the pair of equations $\by = \bH \bu$ and $\bu = \bPhi \by$. }
	\label{figp1}
\end{figure}

Consider map $\bH$ defined by \cref{eqp2a} and \cref{eqp2b}.
For simplicity, we assume that $x^0 = 0$.
As we eliminate $\{x^1, \dots, x^k\}$ from \cref{eqp2a} and \cref{eqp2b},
map $\bH$ can be represented as a semi-infinite matrix,

\begin{equation}\label{eqp3}
	\begin{bmatrix}
		y^0 \\ y^1 \\ y^2 \\ y^3 \\ \vdots
	\end{bmatrix}
	=
	\underbrace{\begin{bmatrix}
			D & 0 & 0 & 0 &\cdots \\
			CB & D & 0 & 0 &\cdots \\
			CAB & CB & D & 0 &\cdots \\
			C(A)^2B & CAB & CB & D & \cdots \\
			\vdots & \ddots & \ddots & \ddots & \ddots
	\end{bmatrix}}_\bH
	\begin{bmatrix}
		u^0 \\ u^1 \\ u^2 \\ u^3 \\ \vdots
	\end{bmatrix}.
\end{equation}

In control theory, map $\bH$ is considered as a (discrete-time) \emph{system} that
maps a sequence of \emph{inputs} $\bu$ to a sequence of \emph{outputs} $\by$.
Map $\bH$ is linear since it can be represented as a semi-infinite matrix.
The matrix representation is lower-triangular and it indicates $\bH$ is \emph{causal}.
Further, $\bH$ is time-invariant because the matrix representation is (block) \emph{Toeplitz},
which means that $\bH$ is (block) constant along diagonals from top-left to bottom right.
Thus, $\bH$ is a \emph{causal linear time-invariant system}.
For the rest of this paper, we will work with such systems
and we will refer to such systems as \emph{linear systems}.

Further, to combine maps $\bH$ and $\bPhi$ together,
a linear algorithm in the form of \cref{eqp2} can be regarded as
a linear system connected in feedback with a nonlinearity shown by \cref{figp1}.
At time $k$, $u^k$ is the input and $y^k$ is the output of the system.
Nonlinear feedback $\phi$ represents the set of oracles
such as the gradient or subgradient of a convex function
and it maps the output $y^k$ to the input $u^k$.

\titleparagraph{State-space realization}
Reconsider equations \cref{eqp2a} and \cref{eqp2b}.
They correspond to the \emph{state-space realization} of system $\bH$.
%An important subclass of linear systems are those that admit a \emph{finite-dimensional state-space realization}.
In control theory, a state-space realization is characterized
by an internal sequence of \emph{states} $\bx$
that evolves according to a difference equation with parameters $(A,B,C,D)$:
\begin{equation}\label{eqp4}
	\begin{aligned}
		x^{k+1}  & = Ax^k + Bu^k, \\
		y^k & = Cx^k + Du^k,
	\end{aligned}
	\qquad
	\text{or equivalently, }
	\bmat{x^{k+1} \\ y^k} = L \bmat{ x^k \\ u^k},
	\text{ where }
	% L = \left[\begin{array}{c:c}
	% 	A & B\\
	% 	\hdashline
	% 	C & D
	% \end{array}\right].
	L = \bmat{A & B \\ C & D}.
\end{equation}
%
% that evolves with time and summarizes
%
% Systems are usually described with equations. Such system equations usually contain
% a set of variables $x$ that evolve with time and summarize effect of past inputs on future
% output, which are called as the states of the system. One important type of linear systems
% can be described with system equations in the form of
%
Here, $u^k \in \R^m$, $y^k \in \R^p$, and $x^k \in \R^n$.
The parameters $(A,B,C,D)$ are matrices of compatible dimensions,
so  $A\in \R^{n\times n}$, $B \in \R^{n\times m}$, $C \in \R^{p\times n}$, and $D \in \R^{p\times m}$.
The state-space realization corresponding to the system $\bH$ can also be characterized by omitting all vectors and writing the block matrix $L$ shown in \cref{eqp4} (right), which is the map from $(x^k,u^k)$ to $(x^{k+1},y^k)$.

In this paper, we rely on such formalism that represents algorithms as linear systems
using a state-space realization as \cref{eqp4} for each algorithm,
following~\cite{hu2020analysis, doi:10.1137/15M1009597}.
The state-space realization $L$ represents the linear part of an algorithm and
map $\phi$ represents the nonlinear part.
Moreover, we have $\mathcal{A} = (L, \phi)$.
In this way, we can unroll \cref{figp1} in time to obtain
the block-diagram shown in \cref{figp2}.
Each dashed box in \cref{figp2} represents map $\mathcal{A}$ for each iteration.

\begin{figure}[tbhp]
	\centering
	\begin{tikzpicture}[>=latex]
		
		\node[scale = 0.75][box6] at (0,-0.2) (algo1) {$L$};
		\node[scale = 0.75] at (0,0) (ref1) {};
		\node[scale = 0.75] at (0,-0.4) (ref2) {};
		\node[scale = 0.75][box3, left of = ref1 , node distance = 8em] (state0) {$x^{k-1}$};
		\node[scale = 0.75][left of = ref1, node distance = 13em] (init1) {\ldots};
		\node[scale = 0.75][box3, right of = ref1 , node distance = 8em] (state1) {$x^{k}$};
		\node[scale = 0.75][box] at (0, -1) (oracle1) {$\phi$};
		\node[scale = 0.75][box3, right of = oracle1, node distance = 6em] (output1) {$y^{k-1}$};
		\node[scale = 0.75, left of= oracle1, node distance = 6em][box3] (input1) {$u^{k-1}$};
		\node[scale = 0.75][box6, right of = algo1, node distance = 16em] (algo2) {$L$};
		\node[scale = 0.75][box8] at (0, -0.55) (a1) {};
		
		\draw[->]  (init1) -- (state0);
		\draw[->]  (state0) -- (ref1-|algo1.west);
		\draw[->]  (state1-|algo1.east) -- (state1);
		\draw[->]   (input1) |- (ref2-|algo1.west);
		\draw[->]  (output1) -- (oracle1);
		\draw[->]  (oracle1) -- (input1);
		\draw[->]  (ref2-|algo1.east) -| (output1) ;
		\draw[->]  (state1) -- (state1-|algo2.west);
		
		\node[scale = 0.75][box3, right of = state1 , node distance = 16em] (state2) {$x^{k+1}$};
		\node[scale = 0.75][box, right of = oracle1, node distance = 16em] (oracle2) {$\phi$};
		\node[scale = 0.75][box3, right of = oracle2, node distance = 6em] (output2) {$y^{k}$};
		\node[scale = 0.75, left of= oracle2, node distance = 6em][box3] (input2) {$u^{k}$};
		\node[scale = 0.75][box6, right of = algo2, node distance = 16em] (algo3) {$L$};
		\node[scale = 0.75][box8, right of = a1, node distance = 16em] (a2) {};
		
		\draw[->]  (state1-|algo2.east) -- (state2);
		\draw[->]  (input2) |- (ref2-|algo2.west);
		\draw[->]  (output2) -- (oracle2);
		\draw[->]  (oracle2) -- (input2);
		\draw[->]  (ref2-|algo2.east) -| (output2);
		\draw[->]  (state2) -- (state1-|algo3.west);
		
		\node[scale = 0.75][right of = state2, node distance = 16em] (end1) {\ldots};
		\node[scale = 0.75][box, right of = oracle2, node distance = 16em] (oracle3) {$\phi$};
		\node[scale = 0.75][box3, right of = oracle3, node distance = 6em] (output3) {$y^{k+1}$};
		\node[scale = 0.75, left of= oracle3, node distance = 6em][box3] (input3) {$u^{k+1}$};
		\node[scale = 0.75][box8, right of = a2, node distance = 16em] (a3) {};
		
		\draw[->]  (state1-|algo3.east) -- (end1);
		\draw[->]  (input3) |- (ref2-|algo3.west);
		\draw[->]  (output3) -- (oracle3);
		\draw[->]  (oracle3) -- (input3);
		\draw[->]  (ref2-|algo3.east) -| (output3);
		
	\end{tikzpicture}
	\caption{Unrolled-in-time block-diagram representation of an algorithm.}
	\label{figp2}
\end{figure}
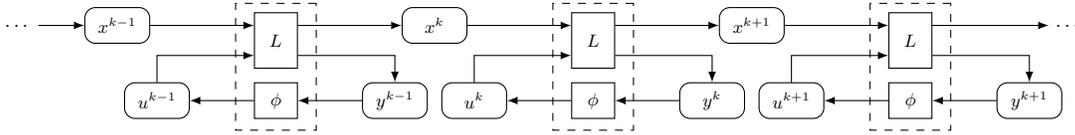

% which is called the state-space representation of the system. At each time step $k$,
% $x^k$ is the state, $u^k$ is the input, and $y^k$ is the output. Matrices
% $A \in R^{n \times n}, B \in R^{n \times m}, C \in R^{m \times n},$ and $D \in R^{m \times m}$
% are constant and are regarded as state-space matrices. For linear time-varying systems,
% the state-space matrices can evolve with time. The state-space formula can be expressed using block notation as

% One thing worthwhile to note is that the time step $k$ for a linear system can either
% be continuous or be discrete. Continuous time step leads to
% continuous-time systems and discrete time step leads to discrete-time systems. For the rest
% of the paper, we would like to focus on discrete-time systems.

\titleparagraph{Impulse response and transfer function}
From \cref{eqp3}, without the assumption that $x^0 = 0$, we can obtain
\begin{equation}\label{eqp6}
	y^k= C(A)^kx^0+ \sum_{j=0}^{k-1}C(A)^{k-(j+1)}Bu^j + Du^k.
\end{equation}
The output $y^k$ is the sum of $C(A)^kx^0$, which is due to the initial condition $x^0$, and
$\sum_{j=0}^{k-1}C(A)^{k-(j+1)}Bu^j + Du^k$, which is due to the inputs $\{u^0,\dots,u^k\}$.
The compact form $\by = \bH\bu$ and its matrix representation \cref{eqp3}
omit the first term that depends on $x^0$.
These representations are formally equivalent to the state-space model
only when the state is initialized at $x^0 = 0$.
However, linearity of $\bH$ allows the two contributions to be studied separately:
\[
(\text{total response}) = \underbrace{(\text{zero input response})}_{\text{set $u^k=0$ for $k \geq 0$}}
\,+\, \underbrace{(\text{zero state response})}_{\text{set $x^0=0$}}.
\]
This decomposition is analogous to writing the general solution
to a linear differential (or difference) equation as the sum of a
homogeneous solution (due to initial conditions only)
and a particular solution (due to the non-homogeneous terms only).
We will characterize linear systems by their input-output map.
The input-output map depends only on the zero state response,
which allows us to avoid details about initialization.
For simplicity, we denote the entries in the matrix representation of $\bH$ in \cref{eqp3} as
\begin{equation}\label{eqp7}
	H^k = \begin{cases}
		D 		  & k = 0 \\
		C(A)^{k-1}B & k \geq 1
	\end{cases}.
\end{equation}
To study the zero state response, recall from \cref{eqp3} that
\begin{equation}\label{eqp8}
	y^k = H^k u^0 + H^{k-1} u^1 + \cdots + H^1 u^{k-1} + H^0u^k.
\end{equation}
The sequence $(H^k)_{k\geq 0}$ is called the \emph{impulse response} of $\bH$, because it corresponds
to %the output observed when we use
the impulsive input $u^0=1$ and $u^j=0$ for $j \geq 1$.

A convenient way to represent $\bH$ is via the use of a \emph{transfer function}.
To this end,
we can represent $\by$ and $\bu$ as generating functions in the variable $z^{-1}$.
Equating powers of $z^{-1}$, we have:
\begin{equation}\label{eqp9}
	\underbrace{\left( y^0 + y^1 z^{-1} + y^{2} z^{-2} + \cdots \right)}_{\hat y(z)} =
	\underbrace{\left( H^0 + H^1 z^{-1} + H^2 z^{-2} + \cdots \right)}_{\hat H(z)}
	\underbrace{\left( u^0 + u^1 z^{-1} + u^{2} z^{-2} + \cdots \right)}_{\hat u(z)}.
\end{equation}
We can recover \cref{eqp8} by expanding the multiplication in \cref{eqp9}
and grouping terms with the same power of $z^{-1}$.
So when written as generating functions, the output is related to the input via multiplication.
The functions $\hat y$ and $\hat u$ are the $z$-transforms of the sequences $\by$ and $\bu$, respectively,
and $\hat H$ is called the \emph{transfer function}. If $p\geq 2$ or $m\geq 2$ (the $H^k$ are matrices),
then $\hat H$ is called the \emph{transfer matrix}.

Substituting \cref{eqp7} into the definition of the transfer function,
we can write a compact form for the formal power series $\hat H$,
which converges on some appropriate set:
\begin{equation}\label{eqp10}
	\hat H(z) = \left[\begin{array}{c|c}
		A & B\\
		\hline
		C & D
	\end{array}\right] = D+\sum_{k=1}^{\infty}C(A)^{k-1}Bz^{-k} = C(zI - A)^{-1}B +D.
\end{equation}
The transfer function $\hat H(z) = C(zI - A)^{-1}B + D$ can be directly computed
from the state-space matrices $(A,B,C,D)$.
Moreover, $\hat H(z)$ is a matrix whose entries are rational functions of $z$.
Hence the transfer function provides a computationally efficient way
to uniquely characterize the input-output map of a system.
We will use the block notation with solid lines to indicate transfer function as in \cref{eqp10}.

\titleparagraph{Linear transformations of state-space realizations}
Consider a linear transformation of the states $x^k$ in \cref{eqp4}.
Specifically, suppose $Q \in \R^{n\times n}$ is invertible,
and define $\tilde{x}^k = Qx^k$ for each $k$.
The new state-space realization in terms of the new variables $\tilde x^k$ is

\begin{equation}\label{eqp11}
	\begin{aligned}
		\tilde x^{k+1}  & = QAQ^{-1} \tilde x^k + QB u^k \\
		y^k & = CQ^{-1} \tilde x^k + Du^k
	\end{aligned},
	\qquad
	\tilde L =
	% \left[\begin{array}{c:c}
	% 	QAQ^{-1} & QB \\
	% 	\hdashline
	% 	CQ^{-1}  &  D
	% \end{array}\right].
	\bmat{ QAQ^{-1} & QB \\
		CQ^{-1}  &  D }.
\end{equation}
It is straightforward to check that $\bH$ and $\tilde \bH$ have the same transfer function.
Therefore,
whether we apply the linear system $\bH$ or $\tilde \bH$,
the same input sequence $\bu$ will produce the same output sequence $\by$,
although the respective states $x^k$ and $\tilde x^k$ will generally be different.
So although the state-space realization
$(A,B,C,D)$ depend on the coordinates used to represent states $x^k$,
the transfer function is invariant under linear transformations.

This invariance is the key to understanding when two optimization algorithms
are the same, % (as a map from initialization to output),
even if they look different as written.
For example, this idea alone suffices to show that \cref{algo1,algo2} are equivalent.
\mnote{Better way to connect this idea to optimization?}

\titleparagraph{Minimal realizations}
Every set of appropriately-sized state-space parameters $(A,B,C,D)$
produces a transfer matrix whose entries are rational functions of $z$.
Closer inspection of the formula $\hat H(z) = C(zI-A)^{-1}B + D$
reveals that $\hat H(z) \to D$ as $z\to\infty$.
Therefore, the rational entries of $\hat H(z)$ must be \emph{proper}:
the degree of the numerator cannot exceed the degree of the denominator.
Moreover, the degree of the common denominator of all
entries of $\hat H(z)$ cannot exceed $n$ (the size of the matrix $A$).
Further, given any transfer matrix $\hat H(z)$ whose entries are proper,
there exists at least one realization $(A,B,C,D)$ whose transfer function is $\hat H(z)$.
Any realization of $\hat H(z)$ for which the size of $A$ is as small as possible
is called \emph{minimal}.
All minimal realizations of $\hat H(z)$ are related by
an invertible state transformation via a suitably
chosen invertible matrix $Q$, as in \cref{eqp11}.

Realizations can be non-minimal when
the transfer function has factors
that cancel from both the numerator and denominator.
For example, the following pair of state-space equations both have the same transfer function:
\begin{align*}
	% \begin{aligned}
	% 	x^{k+1} &= x^k + u^k \\
	% 	y^k &= x^k
	% \end{aligned}
	% &\to
	\hat H(z) = & \left[\begin{array}{c|c}
		1 & 1 \\ \hline
		1 & 0
	\end{array}\right] =  1 \cdot (z-1)^{-1} \cdot 1 = \frac{1}{z-1},\\
	%
	% \begin{aligned}
	% 	x^{k+1} &= \begin{bmatrix}1 & 2 \\ 0 & 3\end{bmatrix}x^k + \begin{bmatrix}1\\ 0\end{bmatrix}u^k \\
	% 	y^k &= \begin{bmatrix}1 & 6\end{bmatrix}x^k
	% \end{aligned}
	% &\to
	\hat H(z) = &\left[\begin{array}{cc|c}
		1 & 2 & 1 \\
		0 & 3 & 0 \\ \hline
		1 & 6 & 0
	\end{array}\right] =
	\begin{bmatrix} 1 & 6\end{bmatrix} \begin{bmatrix}z-1 & -2 \\ 0 & z-3\end{bmatrix}^{-1}\begin{bmatrix}1\\ 0\end{bmatrix} =  \frac{z-3}{z^2-4z+3} = \frac{1}{z-1}.
\end{align*}
We can detect when two optimization algorithms are equivalent,
even when one has additional (redundant) state variables,
by computing their minimal realizations.
This strategy shows that \cref{algo3,algo4} are equivalent.

\titleparagraph{Inverse of state-space realization}
Consider a state-space system $\bH$ with
realization \cref{eqp4} and for which $m=p$ (input and output dimension are the same).
Is it possible to find a state-space system $\bH^{-1}$ that maps $\by$ back to $\bu$? It turns out this is possible
if and only if $D$ is invertible.
In this case, the transfer function of $\bH^{-1}$ is $\hat H^{-1}(z)$,
a matrix whose entries are rational functions of $z$.
One possible state-space realization of the inverse system $\bH^{-1}$ is
\[
\hat H^{-1}(z) =
\left[\begin{array}{c|c}
	A & B \\ \hline
	C & D
\end{array}\right]^{-1}
=
\left[\begin{array}{c|c}
	A-BD^{-1}C & BD^{-1} \\ \hline
	-D^{-1}C & D^{-1}
\end{array}\right].
\]
This explicit realization can be obtained by applying the matrix inversion lemma to \cref{eqp10}. We can extend this idea to partial inverses of linear systems.
Suppose the input sequence $\bu$ is partitioned as
\[
\bu \defeq (u^0, u^1, \dots) = \left( \bmat{u^0_1 \\ u^0_2}, \bmat{u^1_1 \\ u^1_2}, \dots \right)	= \bmat{\bu_1 \\ \bu_2},\quad\text{where } u^k_1 \in \R^{m_1}, u^k_2 \in \R^{m_2}\text{ for all }k\geq 0
\]
and similarly for $\by$.
The matrix $D$ and transfer matrix $\hat H(z)$ can also be partitioned conformally as
\begin{equation}\label{eqp12}
	D = \begin{bmatrix}
		D_{11} & D_{12} \\
		D_{21} & D_{22}
	\end{bmatrix} \quad\text{and}\quad \hat H(z) = \begin{bmatrix}
		\hat H_{11}(z) & \hat H_{12}(z) \\
		\hat H_{21}(z) & \hat H_{22}(z)
	\end{bmatrix},\quad\text{where }D_{ij} \in \R^{p_i \times m_j}\text{ and similarly for }\hat H(z).
\end{equation}
If $D_{11}$ is invertible, we can partially invert $\bH$ with respect to $\bu_1$ and $\by_1$
to form a new system $\bH'$ that maps $(\by_1,\bu_2)\mapsto (\bu_1,\by_2)$.
The transfer function $\hat H'(z)$ of the new system $\bH'$ satisfies
\begin{equation}\label{eqp13}
	\hat H'(z) = \left[\begin{array}{c c}
		\hat H_{11}^{-1}(z) & -\hat H_{11}^{-1}(z)\hat H_{12}(z) \\
		\hat H_{21}(z)\hat H_{11}^{-1}(z) & \hat H_{22}(z) - \hat H_{21}(z)\hat H_{11}^{-1}(z)\hat H_{12}(z)
	\end{array}\right].
\end{equation}

A detailed proof of \cref{eqp13} is presented in \cref{proof-inversesystem}. Note that if $D_{22}$ is invertible,
we can perform a similar partial inverse with respect to the second component.
When an optimization algorithm
is related to another by conjugation of one of the function oracles,
their transfer functions are related by (possibly partial) inversion.

\section{Algorithm equivalence}\label{equivalence}
We are now ready to revisit the motivating examples and formally define algorithm equivalence.

\subsection{Assumptions}\label{assumptions}\

We now formally state the assumptions that we have discussed informally in \cref{preliminary}.
We assume all algorithms throughout the paper satisfy these assumptions unless specifically noted.
\begin{assumption}\label{assump:linearalgo}
	The algorithm is causal, time-invariant, and linear.
\end{assumption}
Any algorithm satisfying \cref{assump:linearalgo}
can be implemented as a sequence of update equations (because it is causal) and
can be written in form \cref{eqp2} (because it is linear and time-invariant).

\begin{assumption}\label{assump:oracle}
	Given an oracle $\phi$, the oracle sequence produced by the algorithm is unique.
\end{assumption}
\Cref{assump:oracle} follows if the output of $\phi$ is
deterministic. It also follows if $\phi$ has internal state but
is deterministic given the sequence of inputs to $\phi$ so far.
% As discussed in \cref{opt}, we will revisit this assumption later in \cref{charac-shift}
% to see how it extends to more complex algorithms.

Two algorithms can only produce the same sequences if called on the same set of oracles
(or on compatible oracles, for example, related by convex conjugacy). We say that
two algorithms are \emph{comparable} if they use the same or compatible oracles.
\begin{assumption}\label{assump:compare}
	When we compare two algorithms to detect equivalence or other relations,
	we assume that they are comparable.
\end{assumption}
% The comparability can have different meanings
% when detecting different types of equivalence or relations
% or for different oracles.
% For the oracle equivalence and shift equivalence cases
% that we will introduce later in this section,
% comparability indicates that the algorithms call the same oracles in each iteration.
% And if the oracles are not single-valued, such as subgradient or stochastic gradient,
% there should be an associated deterministic selection process for those oracles in order to
% maintain comparability.
% For example, in subgradient case, we can always select one with the smallest norm;
% or in stochastic gradient case, the random seed is fixed.
% We will interpret comparability
% for different types of equivalence and relations or for different oracles in later sections.
We will discuss several kinds of compatible oracles in the sequel.

\subsection{Oracle equivalence}\label{oracle-equ}\

In the first motivating example,
the algorithms have the same number of states,
and the state sequences are equivalent up
to an invertible linear transformation.
We call these algorithms \emph{state-equivalent}.

In the second motivating example,
the state sequence of \cref{algo3} can be transformed into the
state sequence of \cref{algo4} with a linear transformation.
However, unlike the first motivating example,
the linear transformation is not invertible;
indeed, \cref{algo4} uses fewer state variables than \cref{algo3}.
Instead, recall that
the sequence of calls to the gradient oracle are identical for \cref{algo3,algo4}.
Hence these algorithms are \emph{oracle-equivalent}.

\begin{definition}\label{def1} Two algorithms are oracle-equivalent
	on a set of optimization problems if, for any problem in the set and
	for all possible oracles,
	there exist initializations for both algorithms
	such that the two algorithms generate the same oracle sequence.
\end{definition}

Oracle-equivalent algorithms generate identical sequence regardless of oracles.
For example, if two oracle-equivalent algorithms both call oracle $\nabla f$ and generate identical oracle sequence,
they will still produce identical oracle sequence if we replace oracle $\nabla f$ to $\nabla g$ or every other possible oracle.
Further, oracle equivalence is a symmetric relation.
Notice that if the oracle sequences (that is, the oracles and their arguments $y^k$) are the same,
then the oracles produce the same inputs $u^k$ for the linear systems of each algorithm.
% The arguments correspond to the outputs of algorithms. If the oracles and the
% arguments are the same, the values returned by the oracles must be the same,
% which means that the inputs of algorithms should also be the same.
% Thus, oracle equivalence can be interpreted
% that from the point of view of oracles,
% oracle-equivalent algorithms are indistinguishable.
Hence, as shown in \cref{fig5},
oracle-equivalent algorithms have matching input $\bu$ and output $\by$ sequences.
The solid double-sided arrow indicates
the sequences $y^k$ and $\tilde y^k$ are identical,
and the sequences $u^k$ and $\tilde u^k$ are identical.

\begin{figure}[tbhp]
	\centering
	\begin{tikzpicture}[>=latex]
		
		%topline
		\node[scale = 0.75][box6] at (0,1.9) (algo1) {$L$};
		\node[scale = 0.75] at (0,1.7) (ref2) {};
		\node[scale = 0.75] at (0,2.1) (ref1) {};
		\node[scale = 0.75][box3, left of = ref1 , node distance = 8em] (state0) {$x^{k-1}$};
		\node[scale = 0.75][left of = ref1, node distance = 13em] (init1) {\ldots};
		\node[scale = 0.75][box3, right of = ref1, node distance = 8em] (state1) {$x^{k}$};
		\node[scale = 0.75][box] at (0, 1.1) (oracle1) {$\phi$};
		\node[scale = 0.75][box3, right of = oracle1, node distance = 6em] (output1) {$y^{k-1}$};
		\node[scale = 0.75, left of= oracle1, node distance = 6em][box3] (input1) {$u^{k-1}$};
		\node[scale = 0.75][box6, right of = algo1, node distance = 16em] (algo2) {$L$};
		
		\draw[->]  (init1) -- (state0);
		\draw[->]  (state0) -- (ref1-|algo1.west);
		\draw[->]  (state1-|algo1.east) -- (state1);
		\draw[->]   (input1) |- (ref2-|algo1.west);
		\draw[->]  (output1) -- (oracle1);
		\draw[->]  (oracle1) -- (input1);
		\draw[->]  (ref2-|algo1.east) -| (output1);
		\draw[->]  (state1) -- (state1-|algo2.west);
		
		\node[scale = 0.75][box3, right of = state1, node distance = 16em] (state2) {$x^{k+1}$};
		\node[scale = 0.75][box, right of = oracle1, node distance = 16em] (oracle2) {$\phi$};
		\node[scale = 0.75][box3, right of = oracle2, node distance = 6em] (output2) {$y^{k}$};
		\node[scale = 0.75, left of= oracle2, node distance = 6em][box3] (input2) {$u^{k}$};
		\node[scale = 0.75][box6, right of = algo2, node distance = 16em] (algo3) {$L$};
		
		\draw[->]  (state1-|algo2.east) -- (state2);
		\draw[->]  (input2) |- (ref2-|algo2.west);
		\draw[->]  (output2) -- (oracle2);
		\draw[->]  (oracle2) -- (input2);
		\draw[->]  (ref2-|algo2.east) -| (output2);
		\draw[->]  (state2) -- (state1-|algo3.west);
		
		\node[scale = 0.75][right of = state2 , node distance = 16em] (end1) {\ldots};
		\node[scale = 0.75][box, right of = oracle2, node distance = 16em] (oracle3) {$\phi$};
		\node[scale = 0.75][box3, right of = oracle3, node distance = 6em] (output3) {$y^{k+1}$};
		\node[scale = 0.75, left of= oracle3, node distance = 6em][box3] (input3) {$u^{k+1}$};
		
		\draw[->]  (state1-|algo3.east) -- (end1);
		\draw[->]  (input3) |- (ref2-|algo3.west);
		\draw[->]  (output3) -- (oracle3);
		\draw[->]  (oracle3) -- (input3);
		\draw[->]  (ref2-|algo3.east) -| (output3);
		
		%botline
		\node[scale = 0.75][box6] at (0,-0.8) (algo10) {$\tilde{L}$};
		\node[scale = 0.75] at (0,-1) (ref10) {};
		\node[scale = 0.75] at (0,-0.6) (ref20) {};
		\node[scale = 0.75][box3, left of = ref10, node distance = 8em] (state00) {$\tilde{x}^{k-1}$};
		\node[scale = 0.75][left of = ref10, node distance = 13em] (init10) {\ldots};
		\node[scale = 0.75][box3, right of = ref10, node distance = 8em] (state10) {$\tilde{x}^{k}$};
		\node[scale = 0.75][box] at (0, 0) (oracle10) {$\phi$};
		\node[scale = 0.75][box3, right of = oracle10, node distance = 6em] (output10) {$\tilde{y}^{k-1}$};
		\node[scale = 0.75, left of= oracle10, node distance = 6em][box3] (input10) {$\tilde{u}^{k-1}$};
		\node[scale = 0.75][box6, right of = algo10, node distance = 16em] (algo20) {$\tilde{L}$};
		
		\draw[->]  (init10) -- (state00);
		\draw[->]  (state00) -- (ref10-|algo10.west);
		\draw[->]  (state10-|algo10.east) -- (state10);
		\draw[->]   (input10) |- (ref20-|algo10.west);
		\draw[->]  (output10) -- (oracle10);
		\draw[->]  (oracle10) -- (input10);
		\draw[->]  (ref20-|algo10.east) -| (output10) ;
		\draw[->]  (state10) -- (state10-|algo20.west);
		
		\node[scale = 0.75][box3, right of = state10 , node distance = 16em] (state20) {$\tilde{x}^{k+1}$};
		\node[scale = 0.75][box, right of = oracle10, node distance = 16em] (oracle20) {$\phi$};
		\node[scale = 0.75][box3, right of = oracle20, node distance = 6em] (output20) {$\tilde{y}^{k}$};
		\node[scale = 0.75, left of= oracle20, node distance = 6em][box3] (input20) {$\tilde{u}^{k}$};
		\node[scale = 0.75][box6, right of = algo20, node distance = 16em] (algo30) {$\tilde{L}$};
		
		\draw[->]  (state10-|algo20.east) -- (state20);
		\draw[->]  (input20) |- (ref20-|algo20.west);
		\draw[->]  (output20) -- (oracle20);
		\draw[->]  (oracle20) -- (input20);
		\draw[->]  (ref20-|algo20.east) -| (output20);
		\draw[->]  (state20) -- (state10-|algo30.west);
		
		\node[scale = 0.75][right of = state20, node distance = 16em] (end10) {\ldots};
		\node[scale = 0.75][box, right of = oracle20, node distance = 16em] (oracle30) {$\phi$};
		\node[scale = 0.75][box3, right of = oracle30, node distance = 6em] (output30) {$\tilde{y}^{k+1}$};
		\node[scale = 0.75, left of= oracle30, node distance = 6em][box3] (input30) {$\tilde{u}^{k+1}$};
		
		\draw[->]  (state10-|algo30.east) -- (end10);
		\draw[->]  (input30) |- (ref20-|algo30.west);
		\draw[->]  (output30) -- (oracle30);
		\draw[->]  (oracle30) -- (input30);
		\draw[->]  (ref20-|algo30.east) -| (output30);
		
		% connections
		
		\draw[-{Straight Barb[left]}] ($(output1.south) + (0.02,0)$) -- ($(output10.north) + (0.02,0)$);
		\draw[-{Straight Barb[left]}] ($(output10.north) + (-0.02,0)$) -- ($(output1.south) + (-0.02,0)$);
		
		\draw[-{Straight Barb[left]}] ($(output2.south) + (0.02,0)$) -- ($(output20.north) + (0.02,0)$);
		\draw[-{Straight Barb[left]}] ($(output20.north) + (-0.02,0)$) -- ($(output2.south) + (-0.02,0)$);
		
		\draw[-{Straight Barb[left]}] ($(output3.south) + (0.02,0)$) -- ($(output30.north) + (0.02,0)$);
		\draw[-{Straight Barb[left]}] ($(output30.north) + (-0.02,0)$) -- ($(output3.south) + (-0.02,0)$);
		
		\draw[-{Straight Barb[left]}] ($(input1.south) + (0.02,0)$) -- ($(input10.north) + (0.02,0)$);
		\draw[-{Straight Barb[left]}] ($(input10.north) + (-0.02,0)$) -- ($(input1.south) + (-0.02,0)$);
		
		\draw[-{Straight Barb[left]}] ($(input2.south) + (0.02,0)$) -- ($(input20.north) + (0.02,0)$);
		\draw[-{Straight Barb[left]}] ($(input20.north) + (-0.02,0)$) -- ($(input2.south) + (-0.02,0)$);
		
		\draw[-{Straight Barb[left]}] ($(input3.south) + (0.02,0)$) -- ($(input30.north) + (0.02,0)$);
		\draw[-{Straight Barb[left]}] ($(input30.north) + (-0.02,0)$) -- ($(input3.south) + (-0.02,0)$);
		
	\end{tikzpicture}
	\caption{Unrolled block-diagram representation of oracle equivalence.}
	\label{fig5}
\end{figure}
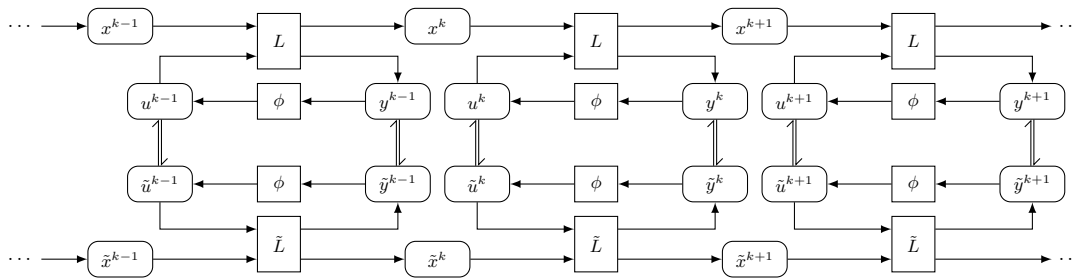

Further, since oracle-equivalent algorithms have identical input and output sequences,
many analytical properties of interest, % regarding oracle iterate sequences,
particularly those pertaining to algorithm convergence or robustness, are preserved.
For example, suppose the target problem is to minimize $f(x)$ with $x\in R^n$,
with solution $x^\star$ and corresponding objective value $f(x^\star)$.
Further suppose $f$ is convex and differentiable
with oracle $\nabla f$.
If two algorithms are oracle-equivalent,
the sequence of gradients $\left \| \nabla f(x) \right \|$,
distance to the solution $\left \| x - x^\star \right \|$,
and objective function values $\| f(x) - f(x^\star)\|$ evolve identically,
so they have the same worst-case convergence, etc:
the gradient sequence and objective value are controlled by the oracle sequence.
\mnote{Laurent, it's obvious that gradient sequence and obj value are controlled by the oracle sequence,
	but why is distance in state space also controlled? State space could be different.}
Moreover, even if the oracle is noisy
(e.g., suffers from additive or multiplicative noise, or even adversarial noise),
from the point of view of the oracle, the algorithms are indistinguishable
and any analytical property that involves only the oracle sequence will be the same.
% Here comparability requires that the noise should be the same.

\subsection{Shift equivalence}\label{shift-equ}\

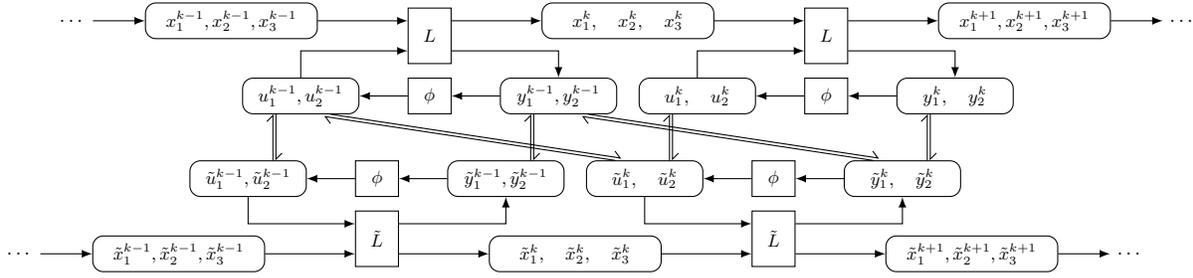
\begin{figure}[tbhp]
	\centering
	\begin{tikzpicture}[>=latex]
		
		%topline
		\node[scale = 0.75][box6] at (0.35,1.9) (algo1) {$L$};
		\node[scale = 0.75] at (0.35,1.7) (ref2) {};
		\node[scale = 0.75] at (0.35,2.1) (ref1) {};
		\node[scale = 0.75][box2, left of = ref1 , node distance = 10em] (state0) {$x^{k-1}_1, x^{k-1}_2, x^{k-1}_3$};
		\node[scale = 0.75][left of = ref1, node distance = 18em] (init1) {\ldots};
		\node[scale = 0.75][box2, right of = ref1, node distance = 10em] (state1) {$x^{k}_1,\quad x^{k}_2,\quad x^{k}_3$};
		\node[scale = 0.75][box] at (0.35, 1.1) (oracle1) {$\phi$};
		\node[scale = 0.75][box4, right of = oracle1, node distance = 6.5em] (output1) {$y^{k-1}_1, y^{k-1}_2$};
		\node[scale = 0.75, left of= oracle1, node distance = 6.5em][box4] (input1) {$u^{k-1}_1, u^{k-1}_2$};
		\node[scale = 0.75][box6, right of = algo1, node distance = 20em] (algo2) {$L$};
		
		\draw[->]  (init1) -- (state0);
		\draw[->]  (state0) -- (ref1-|algo1.west);
		\draw[->]  (state1-|algo1.east) -- (state1);
		\draw[->]   (input1) |- (ref2-|algo1.west);
		\draw[->]  (output1) -- (oracle1);
		\draw[->]  (oracle1) -- (input1);
		\draw[->]  (ref2-|algo1.east) -| (output1);
		\draw[->]  (state1) -- (state1-|algo2.west);
		
		\node[scale = 0.75][box2, right of = state1, node distance = 20em] (state2) {$x^{k+1}_1, x^{k+1}_2, x^{k+1}_3$};
		\node[scale = 0.75][box, right of = oracle1, node distance = 20em] (oracle2) {$\phi$};
		\node[scale = 0.75][box4, right of = oracle2, node distance = 6.5em] (output2) {$y^{k}_1, \quad y^k_2$};
		\node[scale = 0.75, left of= oracle2, node distance = 6.5em][box4] (input2) {$u^{k}_1, \quad u^k_2$};
		
		\draw[->]  (state1-|algo2.east) -- (state2);
		\draw[->]  (input2) |- (ref2-|algo2.west);
		\draw[->]  (output2) -- (oracle2);
		\draw[->]  (oracle2) -- (input2);
		\draw[->]  (ref2-|algo2.east) -| (output2);
		\node[scale = 0.75][right of = state2, node distance = 8em] (end1) {\ldots};
		\draw[->]  (state2) -- (end1);
		
		%botline
		\node[scale = 0.75][box6] at (-0.35,-0.8) (algo10) {$\tilde{L}$};
		\node[scale = 0.75] at (-0.35,-1) (ref10) {};
		\node[scale = 0.75] at (-0.35,-0.6) (ref20) {};
		\node[scale = 0.75][box2, left of = ref10, node distance = 10em] (state00) {$\tilde{x}^{k-1}_1, \tilde{x}^{k-1}_2, \tilde{x}^{k-1}_3$};
		\node[scale = 0.75][left of = ref10, node distance = 18em] (init10) {\ldots};
		\node[scale = 0.75][box2, right of = ref10, node distance = 10em] (state10) {$\tilde{x}^{k}_1,\quad \tilde{x}^{k}_2,\quad \tilde{x}^{k}_3$};
		\node[scale = 0.75][box] at (-0.35, 0) (oracle10) {$\phi$};
		\node[scale = 0.75][box4, right of = oracle10, node distance = 6.5em] (output10) {$\tilde{y}^{k-1}_1, \tilde{y}^{k-1}_2$};
		\node[scale = 0.75, left of= oracle10, node distance = 6.5em][box4] (input10) {$\tilde{u}^{k-1}_1, \tilde{u}^{k-1}_2$};
		\node[scale = 0.75][box6, right of = algo10, node distance = 20em] (algo20) {$\tilde{L}$};
		
		\draw[->]  (init10) -- (state00);
		\draw[->]  (state00) -- (ref10-|algo10.west);
		\draw[->]  (state10-|algo10.east) -- (state10);
		\draw[->]   (input10) |- (ref20-|algo10.west);
		\draw[->]  (output10) -- (oracle10);
		\draw[->]  (oracle10) -- (input10);
		\draw[->]  (ref20-|algo10.east) -| (output10) ;
		\draw[->]  (state10) -- (state10-|algo20.west);
		
		\node[scale = 0.75][box2, right of = state10 , node distance = 20em] (state20) {$\tilde{x}^{k+1}_1, \tilde{x}^{k+1}_2, \tilde{x}^{k+1}_3$};
		\node[scale = 0.75][box, right of = oracle10, node distance = 20em] (oracle20) {$\phi$};
		\node[scale = 0.75][box4, right of = oracle20, node distance = 6.5em] (output20) {$\tilde{y}^{k}_1, \quad \tilde{y}^{k}_2$};
		\node[scale = 0.75, left of= oracle20, node distance = 6.5em][box4] (input20) {$\tilde{u}^{k}_1, \quad \tilde{u}^{k}_2$};
		
		\draw[->]  (state10-|algo20.east) -- (state20);
		\draw[->]  (input20) |- (ref20-|algo20.west);
		\draw[->]  (output20) -- (oracle20);
		\draw[->]  (oracle20) -- (input20);
		\draw[->]  (ref20-|algo20.east) -| (output20);
		\node[scale = 0.75][right of = state20, node distance = 8em] (end10) {\ldots};
		\draw[->]  (state20) -- (end10);
		
		% matching
		
		\draw[-{Straight Barb[left]}] ($(output1.south) + (-0.33,0)$) -- ($(output10.north) + (0.37,0)$);
		\draw[-{Straight Barb[left]}] ($(output10.north) + (0.33,0)$) -- ($(output1.south) + (-0.37,0)$);
		
		\draw[-{Straight Barb[left]}] ($(output1.south) + (0.39,0)$) -- ($(output20.north) + (-0.31,+0.03)$);
		\draw[-{Straight Barb[left]}] ($(output20.north) + (-0.39,0)$) -- ($(output1.south) + (0.31,-0.03)$);
		
		\draw[-{Straight Barb[left]}] ($(output2.south) + (-0.33,0)$) -- ($(output20.north) + (0.37,0)$);
		\draw[-{Straight Barb[left]}] ($(output20.north) + (0.33,0)$) -- ($(output2.south) + (-0.37,0)$);
		
		\draw[-{Straight Barb[left]}] ($(input1.south) + (-0.33,0)$) -- ($(input10.north) + (0.37,0)$);
		\draw[-{Straight Barb[left]}] ($(input10.north) + (0.33,0)$) -- ($(input1.south) + (-0.37,0)$);
		
		\draw[-{Straight Barb[left]}] ($(input1.south) + (0.39,0)$) -- ($(input20.north) + (-0.31,+0.03)$);
		\draw[-{Straight Barb[left]}] ($(input20.north) + (-0.39,0)$) -- ($(input1.south) + (0.31,-0.03)$);
		
		\draw[-{Straight Barb[left]}] ($(input2.south) + (-0.33,0)$) -- ($(input20.north) + (0.37,0)$);
		\draw[-{Straight Barb[left]}] ($(input20.north) + (0.33,0)$) -- ($(input2.south) + (-0.37,0)$);
		
	\end{tikzpicture}
	\caption{Unrolled block-diagram representation of shift equivalence.}
	\label{fig6}
\end{figure}

Now consider \cref{algo5,algo6} from the third motivating example.
They are not oracle-equivalent.
However, their input and output sequences become
identical after shifting \cref{algo5} one step backward:
these algorithms are \emph{shift-equivalent}.

\begin{definition}\label{def2}
	Two algorithms are shift-equivalent
	on a set of problems if,
	for any problem in the set and
	for all possible oracles,
	there exist initializations for both algorithms
	such that the oracle sequences match up to a prefix.
\end{definition}

Shift equivalence can also be interpreted as oracle equivalence up to a shift.
We depict shift equivalence graphically in \cref{fig6}.
Conversely, oracle equivalence can be regarded as a special case of shift equivalence,
where the oracle sequences match without any shift.
Besides, similar as oracle equivalence, shift equivalence is also symmetric.

\subsection{Discussion}\label{discussion-equ}\

\titleparagraph{One algorithm, many interpretations}
Is it useful to have many different forms of an algorithm, if
all the forms are (oracle- or shift-)equivalent?
Yes: different rewritings of one algorithm
often yield different (``physical'') intuition.
For example,
\cref{algo_i1} uses the current
loss function for extrapolation~\cite{vasilyev2010extragradient};
while \cref{algo_i2} seems to extrapolate
from the previous loss function~\cite{censor2011subgradient}.
Equivalent algorithms can differ in memory usage, computational efficiency,
or numerical stability.
For example, implementations of
\cref{algo_i3,algo_i4} lead to different memory usage~\cite{daskalakis2018training, malitsky2015projected}.
In each time step $k$, \cref{algo_i3} needs to store $x_2^{k}, x_2^{k+1}$ and $F^k(\cdot)$,
but \cref{algo_i4} only needs to store $x_1^{k}$ and $x_1^{k+1}$ in memory.
These different rewritings also naturally yield different generalizations,
for example, by projecting different state variables.

\titleparagraph{Limitations}
Do these formal notions of equivalence capture everything an
optimization expert might mean by ``equivalent algorithms''?
No: an example is shown in \cref{algop3}.
\Cref{algop3,algo4} are related by a nonlinear state transformation,
$x^k = \textnormal{exp}(\xi^{k})$. However, none of the equivalences we have
discussed capture this example.
The difficulty is that \cref{algop3} is a nonlinear algorithm,
while all of our machinery for detecting algorithm equivalence requires linearity.
While notions of nonlinear equivalence are certainly interesting,
in this paper we will define only those types of equivalence
that our framework can detect.

\begin{algorithm}[H]
	\centering
	\caption{}
	\label{algop3}
	\begin{algorithmic}
		\FOR{$k=1, 2,\dots$}
		\STATE{$x^{k+1} = x^k \textnormal{exp} ( - \frac{1}{5} \nabla f(\textnormal{log} x^k))$}
		\ENDFOR
	\end{algorithmic}
\end{algorithm}

\section{A characterization of oracle equivalence}\label{charac-oracle}
In this section, we will discuss how to characterize oracle equivalence
via transfer functions.
Recall that oracle equivalence, introduced in \cref{equivalence},
characterizes an algorithm by its oracle sequence.
This sequence is uniquely determined by
the initialization of the algorithm (which we ignore)
and the input-output map of the linear system representing the algorithm.
While the state-space realization of two equivalent algorithms may differ,
from \cref{control}, recall that the transfer function of a linear system
uniquely characterizes the system as an input-output map.
Fortunately, using \cref{eqp10},
we can directly calculate the transfer function
from the state-space realization of an algorithm;
and we can use equality of transfer functions
to check if two algorithms are equivalent.
This machinery allows us to avoid the issue of initialization
(or of the optimization problem!) entirely,
as we can check algorithm equivalence without ever producing a sequence of iterates.

More formally,
consider two oracle-equivalent algorithms
with the same number of oracle calls in each iteration.
From \cref{oracle-equ}, we know that for every optimization problem,
and for all possible oracles,
there exist initializations for both algorithms
so that the oracle sequence of the two algorithms is the same.
Concretely, by picking the initializations of both algorithms appropriately,
we can ensure that the first output of the linear systems match.
Hence (since the oracles are the same), the first input of the linear systems match,
and so the second output of the linear systems match, etc.
By induction, for each possible sequence of input $\bu$,
they produce identical sequences of output $\by$.
% If they have the same number of oracle calls in each iteration, for any
% time step $k$ they have identical inputs $u_k$ and outputs $y_k$.
Then from \cref{control}, the algorithms must have identical impulse responses
and consequently identical transfer functions.
In light of the previous discussion,
we have proved the following proposition,
since each step in the reasoning above is necessary and sufficient.
We defer a detailed mathematical proof to \cref{proof-oracleequivalence}.

\begin{proposition}\label{prop1}
	Algorithms with the same oracle calls in each iteration are oracle-equivalent
	if and only if they have identical transfer functions.
\end{proposition}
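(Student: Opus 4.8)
The plan is to prove the two implications separately, resting everything on one fact established earlier in the paper: by \cref{eqp10} together with \cref{eqp7}, two linear systems of matching input/output dimensions (which is the role of the hypothesis that the two algorithms make the same number of oracle calls per iteration) have the same transfer function if and only if they have the same impulse response $(H^k)_{k\ge 0}$; and by \cref{eqp8} the zero-state response $y^k=\sum_{j=0}^{k}H^{k-j}u^j$ depends on the realization $(A,B,C,D)$ only through that impulse response. Throughout, I write $\mathcal A=(L,\phi)$ and $\tilde{\mathcal A}=(\tilde L,\phi)$ with the \emph{same} oracle map $\phi$ (it is fixed by the optimization problem), and I decompose each output as in \cref{eqp6} into a zero-input part $CA^kx^0$ and the zero-state part above.

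For the ``if'' direction, assume the two transfer functions agree, hence the impulse responses agree. Take an arbitrary problem in the set and an arbitrary initialization $x^0$ of $\mathcal A$; running $\mathcal A$ produces an oracle sequence $(\phi,y^k)_{k\ge 0}$ with oracle outputs $u^k=\phi(y^k)$. I would then choose an initialization $\tilde x^0$ of $\tilde{\mathcal A}$ that reproduces the zero-input response of this run, i.e.\ with $\tilde C\tilde A^k\tilde x^0=CA^kx^0$ for all $k$, and prove by induction on $k$ that $\tilde y^k=y^k$, and consequently $\tilde u^k=\phi(\tilde y^k)=\phi(y^k)=u^k$: at each step the zero-state contributions up to time $k$ coincide because the impulse responses coincide and the inputs $u^0,\dots,u^k$ have already been matched, while the zero-input contributions coincide by the choice of $\tilde x^0$. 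Hence $\mathcal A$ and $\tilde{\mathcal A}$ generate the same oracle sequence from these initializations, which is exactly \cref{def1}.

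For the ``only if'' direction I would run the reasoning that immediately precedes the statement: oracle equivalence says that for every problem and every initialization of one algorithm there is an initialization of the other producing a matching oracle sequence, hence (same oracles) matching input sequences $\bu$ and matching output sequences $\by$. To extract the impulse response, vary the problem so that the oracle is, say, the gradient of a linear function, which forces the oracle output to a prescribed constant $c$ regardless of its argument, and start $\mathcal A$ at $x^0=0$; by \cref{eqp8} the output is the partial-sum sequence $y^k=(H^0+\cdots+H^k)c$. Matching this for $\tilde{\mathcal A}$, differencing consecutive terms, and letting $c$ range over a basis forces $H^k=\tilde H^k$ for every $k\ge 0$, so by \cref{eqp10} the transfer functions are equal. (Equivalently: the input/output sequences realizable by genuine problems are rich enough that the two zero-state maps of \cref{eqp3} must coincide.)

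The hard part will be the bookkeeping around \emph{initialization}, since the transfer function captures only the zero-state response and discards the zero-input term $CA^kx^0$. In the ``if'' direction this shows up as the need to produce an initialization $\tilde x^0$ of the second algorithm reproducing the first algorithm's zero-input response; this is the point where the realization, not just its transfer function, matters. In the well-behaved (controllable) case it follows cleanly from the fact that all minimal realizations of a common transfer function are related by an invertible state transformation $Q$ as in \cref{eqp11}, so that $\tilde x^0=Qx^0$ works, with the general case reduced to this one. Symmetrically, the delicate point in the ``only if'' direction is certifying that the inputs arising from genuine optimization problems are varied enough to pin down $(H^k)_{k\ge 0}$; exhibiting concrete oracles (gradients of linear or quadratic functions) that drive the linear system with prescribed inputs is what makes that rigorous.
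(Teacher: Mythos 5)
Your proof follows essentially the same route as the paper's: oracle equivalence is unwound into the statement that the two linear systems realize the same input--output map, which is then identified with equality of impulse responses and hence of transfer functions via \cref{eqp7}--\cref{eqp10}. The two points you flag as delicate --- producing an initialization of the second algorithm that reproduces the first's zero-input response term $CA^kx^0$, and certifying that the inputs generated by genuine oracles are rich enough to pin down $(H^k)_{k\ge 0}$ --- are exactly the points the paper's own argument passes over silently (it matches only the first output and asserts the induction), so your version is, if anything, the more careful one.
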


Importantly, oracle-equivalent algorithms have the same transfer function,
even if they have a different number of state variables.
But any realization of the algorithm must have
at least as many state variables as
the minimal realization of the linear system.

\titleparagraph{Remark}
It is meaningless to compare algorithms with different oracle calls,
as two algorithms are oracle equivalent if
there exist initializations
for both algorithms such that they generate the same oracle sequence.
Hence throughout this section, we make \cref{assump:compare}:
when we compare two algorithms, we assume both algorithms use the same set of oracles.
In this case, by \cref{control}, we can always initialize both algorithms at zero to satisfy the requirement of oracle equivalence.
For any algorithm that involves constant terms in its state-space realization, we can affinely transform
it into an equivalent state-space realization without constant terms.
Under this affine transformation,
zero still satisfies the requirements of initialization for oracle-equivalence.
This justifies our approach to characterize oracle equivalence with transfer functions and ignore the initializations.
Further, from \cref{eqp6}, the effect of initialization diminishes as time step goes to infinity,
thus, asymptotically initialization does not affect the behavior of an algorithm such as convergence properties.

Oracle-equivalent algorithms have identical oracle sequences
and hence converge to the same fixed point (if they converge).
% From the perspective of linear dynamical systems, the optimal solutions are the
% \emph{fixed points} for the linear system.
Suppose algorithm $\mathcal{A}_1: \mathcal X \to \mathcal X$ with (nonlinear) oracle $\phi: \mathcal X \to \mathcal X$
and state-space realization $(A_1, B_1, C_1, D_1)$,
converges to a fixed point $(y^\star, u^\star, x^\star)$ that satisfies
\begin{equation}\label{eq36}
	\begin{aligned}
		x^\star & = A_1 x^\star + B_1 u^\star \\
		y^\star & = C_1 x^\star + D_1 u^\star \\
		u^\star & = \phi(y^\star).
	\end{aligned}
\end{equation}
If algorithm $\mathcal{A}_2$ is oracle-equivalent to $\mathcal{A}_1$,
$\mathcal{A}_2$ converges to a fixed point $(y^\star, u^\star, \tilde x^\star)$
that has the same output and input as the fixed point of $\mathcal{A}_1$;
however, the state $\tilde x^\star$ may not be the same,
or even have the same dimension.

Further, if there is an invertible linear map $Q$ between the states of $\mathcal{A}_1$ and $\mathcal{A}_2$ and
$(y^\star, u^\star, x^\star)$ is a fixed point of $\mathcal{A}_1$,
then $(y^\star, u^\star, Qx^\star)$ is a fixed point of $\mathcal{A}_2$.
We can use this fact to derive a relation between the
state-space realizations of the two algorithms:
the fixed point equation for $\mathcal{A}_2$ can be written as
\begin{equation}\label{eq37}
	\begin{aligned}
		Qx^\star & = QA_1Q^{-1} Qx^\star + QB_1 u^\star \\
		y^\star & = C_1Q^{-1} Qx^\star + D_1 u^\star \\
		u^\star & = \phi(y^\star),
	\end{aligned}
\end{equation}
which shows that the state-space realization
of $\mathcal{A}_2$ is
\begin{equation}\label{eq2}
	% \left[\begin{array}{c:c c}
	% 	QA_1Q^{-1} & QB_1 \\
	% 	\hdashline
	% 	C_1Q^{-1}  &  D_1
	% \end{array}\right],
	\bmat{
		QA_1Q^{-1} & QB_1 \\
		C_1Q^{-1}  &  D_1},
\end{equation}
which can be obtained by \cref{eqp11}.

\subsection{Motivating examples: proof of equivalence}
Now, we will revisit the first and second motivating examples
and apply \cref{prop1} to show equivalence.
We perform the computation using the gradient oracle ($\nabla f$) as the oracle
to compute the state-space realizations and transfer functions.

\titleparagraph{\Cref{algo1,algo2}}
The state-space realization and transfer function of \cref{algo1} are shown as
\begin{displaymath}
	\hat H_1(z) = \left[\begin{array}{c c|c}
		2 & -1 &-\frac{1}{10}\\
		1 & 0 & 0  \\
		\hline
		2 & -1 & 0
	\end{array}\right] =
	\left[\begin{array}{c c} 2 & -1 \end{array} \right] \left(zI - \left[\begin{array}{c c} 2 & -1 \\
		1 & 0 \end{array}\right]\right)^{-1} \left[\begin{array}{c} -\frac{1}{10}\\  0 \end{array} \right] = \frac{-2z + 1}{10(z-1)^2}.
\end{displaymath}
The state-space realization and the transfer function of \cref{algo2} are
\begin{displaymath}
	\hat H_2(z) = \left[\begin{array}{c c|c}
		1 & -1 &-\frac{1}{5}\\
		0 & 1 & \frac{1}{10}  \\
		\hline
		1 & 0 & 0
	\end{array}\right] =
	\left[\begin{array}{c c} 1 & 0 \end{array} \right] \left(zI - \left[\begin{array}{c c} 1 & -1 \\
		0 & 1 \end{array}\right]\right)^{-1} \left[\begin{array}{c} -\frac{1}{5}\\  \frac{1}{10} \end{array} \right] = \frac{-2z + 1}{10(z-1)^2}.
\end{displaymath}
Hence we see \cref{algo1,algo2}
have the same transfer function,
so by \cref{prop1} they are oracle-equivalent.
In fact, since the algorithms have the same number of state variables,
there exists an invertible linear transformation
\[
Q = \bmat{ 2 & -1 \\ -1 & 1}
\]
to convert the state-space realization of \cref{algo1}
to the state-space realization of \cref{algo2} following \cref{eqp11}.

\titleparagraph{\Cref{algo3,algo4}}
The state-space realization and transfer function of \cref{algo3} are
\begin{displaymath}
	\hat H_3(z) = \left[\begin{array}{c c|c}
		3 & -2 & \frac{1}{5}\\
		1 & 0 & 0  \\
		\hline
		-1 & 2 & 0
	\end{array}\right] =
	\left[\begin{array}{c c} -1 & 2 \end{array} \right] \left(zI - \left[\begin{array}{c c} 3 & -2 \\
		1 & 0 \end{array}\right]\right)^{-1} \left[\begin{array}{c} \frac{1}{5}\\  0 \end{array} \right] = -\frac{1}{5(z-1)}.
\end{displaymath}
The state-space realization and transfer function of \cref{algo4} are
\begin{displaymath}
	\hat H_4(z) = \left[\begin{array}{c|c}
		1 & -\frac{1}{5}\\
		\hline
		1 &  0
	\end{array}\right] =
	\left[\begin{array}{c} 1 \end{array} \right] \left(zI - \left[\begin{array}{c} 1 \end{array}\right]\right)^{-1}
	\left[\begin{array}{c} -\frac{1}{5} \end{array} \right] = -\frac{1}{5(z-1)}.
\end{displaymath}
\Cref{algo3,algo4} have the same transfer function,
so by \cref{prop1} they are oracle-equivalent.
On the other hand, they have different numbers of states.
Consider the invertible linear transformation
\begin{displaymath}
	Q = \left[ \begin{array}{c c} -1 & 2 \\ 0 & 1 \end{array}\right].
\end{displaymath}
Applying $Q$ to the state-space realization of \cref{algo3} leads to
\begin{displaymath}
	\left[\begin{array}{c c:c}
		1 & 0 & -\frac{1}{5}\\
		-1 & 2 & 0  \\
		\hdashline
		1 & 0 & 0
	\end{array}\right],
\end{displaymath}
where we have used dashed lines to demarcate the blocks in the state-space realization.
This has the same minimal realization as \cref{algo4} by \cref{control}. % on minimal realizations.
\begin{displaymath}
	\left[\begin{array}{c:c}
		1 & -\frac{1}{5}\\
		\hdashline
		1 & 0
	\end{array}\right].
\end{displaymath}
Note that the state-space realization of \cref{algo4} is a minimal realization.
This shows the reason why \cref{algo3,algo4} are equivalent
even if they have different numbers of states.

Now we show how the sausage was made.
\Cref{algo3} was designed by starting with the more complex \emph{Triple momentum algorithm}
\cref{algo13}~\cite{doi:10.1137/15M1009597,tmm} and choosing parameters of the algorithm
so its transfer function matched \cref{algo4}.

\renewcommand{\thealgorithm}{6.\arabic{algorithm}}
\setcounter{algorithm}{0}

\begin{algorithm}[H]
	\caption{Triple momentum algorithm}
	\label{algo13}
	\begin{algorithmic}
		\FOR{$k=0, 1, 2,\ldots$}
		\STATE{$x^{k+1}_1 = (1+\beta)x^k_1 - \beta x^k_2 - \alpha \nabla f((1+\eta)x^k_1 - \eta x^k_2)$}
		\STATE{$x^{k+1}_2 = x^k_1 $}
		\ENDFOR
	\end{algorithmic}
\end{algorithm}

The state-space realization and transfer function of \cref{algo13} are
\begin{equation}\label{eq7}
	\hat H_7(z) = \left[\begin{array}{c c|c}
		1+\beta & -\beta & -\alpha \\
		1 & 0 & 0  \\
		\hline
		1+\eta & -\eta & 0
	\end{array}\right]
	= -\frac{\alpha((\eta+1)z-\eta)}{(z-1)(z-\beta)}.
\end{equation}
We now demand that \cref{eq7}(right) must equal the transfer function of \cref{algo4}
for all values of $z$, resulting in the equations
\begin{equation}\label{eq3}
	\begin{aligned}
		& 5\alpha(\eta+1) = 1 \\
		& 5\alpha\eta = \beta.
	\end{aligned}
\end{equation}
We solve for the parameters $\alpha$, $\eta$ and $\beta$
to find a solution $\alpha =-\frac{1}{5}$, $\beta = 2$ and $\eta = -2$ to \cref{eq3}
that corresponds to \cref{algo3}.
Other solutions exist:
for example, $\alpha = 1$, $\beta = -4$ and $\eta = -\frac{4}{5}$ solves \cref{eq3}
and yields another (different!) algorithm equivalent to \cref{algo4}.

\section{A characterization of shift equivalence}\label{charac-shift}
We can also characterize shift equivalence using transfer functions.
Suppose an algorithm uses more than one oracle,
and the call to the second oracle depends on the value of the first.
Take \cref{algo5} as example:
at iteration $k$,
the first update equation calls the oracle $\textnormal{prox}_{f}$ to compute $x^{k+1}_1 = \textnormal{prox}_{f}(x^k_3)$,
and the second update equation calls the oracle $\textnormal{prox}_{g}$ to compute $x^{k+1}_2 = \textnormal{prox}_{g}(2x^{k+1}_1 - x^k_3)$.
This second update relies on the value of $x^{k+1}_1$.
Imagine now that we reorder the update equations by some permutation.
Generally this change produces an entirely different algorithm.
But if the permutation is a \emph{cyclic} permutation,
the order of the oracle calls is preserved.
In the example of \cref{algo5}, we could start with the update equation
$x^{k+1}_2 = \textnormal{prox}_{g}(2x^{k+1}_1 - x^k_3)$
and produce exactly the same sequence of oracle calls (after the first)
by initializing $x^{k+1}_1$ and $x^k_3$ appropriately.
This new algorithm is shift-equivalent to \cref{algo5} by \cref{def2}.

\Cref{algo5} has three update equations, and so there are two other
algorithms that may be produced by cyclic permutations of \cref{algo5},
shown below as \cref{algo5_1,algo5_2}.

\renewcommand{\thealgorithm}{7.\arabic{algorithm}}
\setcounter{algorithm}{0}
\begin{minipage}[t]{0.46\textwidth}
	\begin{algorithm}[H]
		\centering
		\caption{}
		\label{algo5_1}
		\begin{algorithmic}
			\FOR{$k=0, 1, 2,\ldots$}
			\STATE{$x^{k+1}_2 = \textnormal{prox}_{g}(2x^{k}_1 - x^k_3)$}
			\STATE{$x^{k+1}_3 = x^k_3 + x^{k+1}_2 - x^{k}_1$}
			\STATE{$x^{k+1}_1 = \textnormal{prox}_{f}(x^{k+1}_3)$}
			\ENDFOR
		\end{algorithmic}
	\end{algorithm}
\end{minipage}
\hfil
\begin{minipage}[t]{0.46\textwidth}
	\begin{algorithm}[H]
		\centering
		\caption{}
		\label{algo5_2}
		\begin{algorithmic}
			\FOR{$k=0, 1, 2,\ldots$}
			\STATE{$x^{k+1}_3 = x^k_3 + x^{k}_2 - x^{k}_1$}
			\STATE{$x^{k+1}_1 = \textnormal{prox}_{f}(x^{k+1}_3)$}
			\STATE{$x^{k+1}_2 = \textnormal{prox}_{g}(2x^{k+1}_1 - x^{k+1}_3)$}
			\ENDFOR
		\end{algorithmic}
	\end{algorithm}
\end{minipage}
\hfil
\vspace{1em}

Both are shift-equivalent to \cref{algo5},
but \cref{algo5_2} is also oracle-equivalent to \cref{algo5}.
(We will revisit and formally prove this result later.)
It is easy to see why:
the oracles $\textnormal{prox}_{f}$ and $\textnormal{prox}_{g}$
are called in the same order in \cref{algo5,algo5_2},
but in the opposite order in \cref{algo5_1}.

We introduce notation to generalize this idea to more complex algorithms.
Consider an algorithm $\mathcal{A}$ that consists of $m$ update equations and
makes $n$ sequential oracle calls in each iteration.
We insist that no update equation may contain more than one oracle call,
so $m \ge n$.
At iteration $k$, the algorithm generates states
$x^k_1, \ldots, x^k_m$,
outputs $y^k_1, \ldots, y^k_n$, and inputs $u^k_1, \ldots, u^k_n$, respectively.
Consider any permutation $\tilde \pi$ of the sequence $(m) = (1, \ldots, m)$.
We call algorithm $\mathcal{B} = P_{\tilde \pi}\mathcal{A}$ a \emph{permutation} of algorithm $\mathcal{A}$
if $\mathcal{B}$ performs the update equations of $\mathcal{A}$ in the order $\tilde \pi$ at each iteration.
The algorithms $\mathcal{A}$ and $\mathcal{B}$
are shift-equivalent
if and only if $\tilde \pi$ is a cyclic permutation of $(m)$.

\begin{proposition}\label{prop3}
	An algorithm and any of its cyclic permutations are shift-equivalent.
	Any two shift-equivalent algorithms are equivalent to cyclic permutations of each other.
\end{proposition}
% Note that some cyclic permutations of an algorithm are also oracle-equivalent to the original algorithm;
% we discuss this phenomenon in greater detail below.

\begin{proof}
	We provide a proof sketch here,
	and defer a detailed proof to \cref{proof-cyclicpermutation}.
	Let us name the oracle calls of the original algorithm $\mathcal A$
	so that the oracles are called in order $(n)$.
	
	Cyclic permutation implies shift equivalence.
	Suppose $\mathcal{B} = P_{\tilde \pi}\mathcal{A}$ where $\tilde \pi$ is a cyclic permutation of $(m)$.
	The permutation of update equations may reorder the
	oracle calls within one iteration,
	so that the oracle calls in algorithm $\mathcal{B}$
	follow a cyclic permutation $\pi$ of $(n)$ (possibly, the identity).
	Hence $\mathcal{A}$ and $\mathcal{B}$ are shift-equivalent.
	(If the permutation is the identity, then the algorithms are also oracle-equivalent.)
	
	Shift equivalence implies cyclic permutation.
	Suppose algorithms $\mathcal A$ and $\mathcal B$ are shift-equivalent.
	If they are also oracle-equivalent, then they can be written using the same set of update equations.
	If they are not oracle-equivalent, we can always find a cyclic permutation of
	the update equations of $\mathcal A$ that produces the same oracle sequence as $\mathcal B$.
	Therefore $\mathcal A$ and $\mathcal B$
	are equivalent to cyclic permutations of each other.
	(In the first case, the permutation is the identity.)
\end{proof}

% redundant
% From the proof sketch of \cref{prop3}, any cyclic permutation of update equations of an algorithm
% that preserves the order of oracle calls within each iteration
% leads to an oracle-equivalent algorithm,
% while ones that shift the order of oracle calls within each iteration
% lead to shift-equivalent algorithms.

\subsection{Reordering oracle calls}\label{odg}

Most optimization algorithms proceed by sequential updates,
each of which depends on the previous update.
However, for completeness, we consider a more general class of
equivalences that arises for algorithms whose oracle updates
have a more complex dependency structure.
We may express the order of oracle calls at each iteration using a directed graph,
where the graph has edge from oracle $i$ to oracle $j$
if oracle call $j$ depends on the result of oracle call $i$ (within the same iteration).
In other words, within the iteration we must call oracle $i$ before oracle $j$.
We call this directed graph the \emph{oracle dependence graph} (ODG) of the algorithm.

An example is provided below as \cref{algo6_1}.
Note that we are not aware of any practical algorithm for optimization
with this ODG.
It is constructed only for illustration.

\begin{minipage}[t]{0.46\textwidth}
	\begin{algorithm}[H]
		\centering
		\caption{}
		\label{algo6_1}
		\begin{algorithmic}
			\FOR{$k=0, 1, 2,\ldots$}
			\STATE{$x^{k+1}_1 = x^k_4 - t\nabla f(x^k_4)$}
			\STATE{$x^{k+1}_2 = x^{k+1}_1 - t\nabla g(x^{k+1}_1)$}
			\STATE{$x^{k+1}_3 = x^{k+1}_1 - t\nabla h(x^{k+1}_1)$}
			\STATE{$x^{k+1}_4 = \textnormal{prox}_{tf}(\frac{1}{2}x^{k+1}_2 + \frac{1}{2}x^{k+1}_3)$}
			\ENDFOR
		\end{algorithmic}
	\end{algorithm}
\end{minipage}
\hfil
\begin{minipage}[t]{0.46\textwidth}
	\begin{algorithm}[H]
		\centering
		\caption{}
		\label{algo6_2}
		\begin{algorithmic}
			\FOR{$k=0, 1, 2,\ldots$}
			\STATE{$x^{k+1}_1 = x^k_4 - t\nabla f(x^k_4)$}
			\STATE{$x^{k+1}_3 = x^{k+1}_1 - t\nabla h(x^{k+1}_1)$}
			\STATE{$x^{k+1}_2 = x^{k+1}_1 - t\nabla g(x^{k+1}_1)$}
			\STATE{$x^{k+1}_4 = \textnormal{prox}_{tf}(\frac{1}{2}x^{k+1}_2 + \frac{1}{2}x^{k+1}_3)$}
			\ENDFOR
		\end{algorithmic}
	\end{algorithm}
\end{minipage}
\hfil
\vspace{1em}

\Cref{fig7} expresses the dependency of oracle calls within each iteration of \cref{algo6_1}.
At each iteration, oracle calls 2 ($\nabla g$) and 3 ($\nabla h$) depends
on the result of oracle call 1 ($\nabla f$); oracle call 4 ($\textnormal{prox}_{tf}$)
depends on the results of oracle calls 1, 2, and 3.

\begin{figure}[tbhp]
	\centering
	\begin{tikzpicture}
		[->,>=stealth',shorten >=1pt,auto,node distance=1.5cm,semithick]
		
		\node[state] (A) [minimum size=0.7cm] {$1$};
		\node[state] (B) [below right of=A] [minimum size=0.7cm] {$2$};
		\node[state] (C) [below left of=A] [minimum size=0.7cm] {$3$};
		\node[state] (D) [below left of=B] [minimum size=0.7cm] {$4$};
		\path (A)  edge node {} (B);
		\path (A)  edge node {} (C);
		\path (B)  edge node {} (D);
		\path (C)  edge node {} (D);
		\path (A)  edge node {} (D);
	\end{tikzpicture}
	\caption{Directed graph representing dependency of oracle calls in \cref{algo6_1}.}
	\label{fig7}
\end{figure}
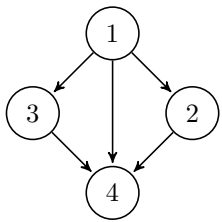

An algorithm is always written as a sequence of update equations.
But some algorithms might have a directed graph that may be written as a sequence (with all edges pointing forward)
in more than one way,
and so can be implemented as a
sequence of oracle calls in more than one way.
% and these sequences of calls would \emph{not} be the same.
For illustration, consider \cref{algo6_1,algo6_2}.
At each iteration, the oracle calls of \cref{algo6_1,algo6_2}
are identical:
that is,
calls to oracles $\nabla f$, $\nabla g$, $\nabla h$, and $\textnormal{prox}_{tf}$ are identical.
The only difference is that the
oracle calls $\nabla g$ and $\nabla h$ are swapped in the oracle sequence at each iteration.
Notice that the state-space realizations of these algorithms \emph{still}
have the same transfer function
(after swapping the second and third columns and rows),
%(up to a permutation of rows and columns).
consistent with the fact that \cref{algo6_1,algo6_2}
share the same directed graph of oracle calls (\cref{fig7}).

We know of no practical optimization algorithm like this.
However, were one to be discovered, we would suggest an expanded definition of
oracle equivalence: two algorithms are oracle-equivalent if there exists a
way of writing each algorithm as a sequence of updates so that
both algorithms have the same sequence of oracle calls.
The transfer function still identifies algorithms that are oracle-equivalent in this
expanded sense.

The oracle calls in an algorithm at each iteration are always written in sequential form.
This sequential form is lost in the state-space realization of the algorithm.
However, the order (dependency) of oracle calls is encoded in the $D$ matrix of the state-space realization.
In this sense, the $D$ matrix encodes the adjacency matrix of the directed graph.
We have $D_{ij} \neq 0$ if and only if
oracle call $i$ depends on the results of oracle call $j$ at each iteration.
For example, in the state-space realization of \cref{algo6_1},
the $D$ matrix is
\begin{displaymath}
	\left[\begin{array}{cccc}
		0 & 0 & 0 & 0 \\
		-t & 0 & 0 & 0 \\
		-t & 0 & 0 & 0 \\
		-t & -\frac{1}{2}t & -\frac{1}{2}t & 0
	\end{array}\right].
\end{displaymath}
In light of this discussion, we can strengthen \cref{prop3} to \cref{prop3_1}.
\begin{proposition}\label{prop3_1}
	An algorithm and any of its cyclic permutations are shift-equivalent;
	further, if they share the same $D$ matrix in their state-space realizations,
	they are also oracle-equivalent.
	Any two shift-equivalent algorithms are equivalent to cyclic permutations of each other.
\end{proposition}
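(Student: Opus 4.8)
The plan is to take the first assertion as given --- it is exactly \cref{prop3} --- and to reduce the ``further'' clause to \cref{def1}, using the correspondence between the $D$ matrix and the oracle dependence graph established just before the proposition. Write $\mathcal{B} = P_{\tilde\pi}\mathcal{A}$ with $\tilde\pi$ a cyclic permutation of $(m)$, and let $p_1 < \cdots < p_n$ be the positions, among the $m$ update equations of $\mathcal{A}$, of the $n$ equations that carry an oracle call.

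The first step is to measure the shift. Unrolling $\mathcal{A}$ in time gives a single infinite stream of update equations, and $\mathcal{B}$ amounts to regrouping that stream into iterations starting from a new cut point, say just after position $c-1$. Setting $\ell \defeq \#\{\, q : p_q < c \,\}$ --- the number of oracle-carrying equations rotated to the tail --- a bookkeeping argument (the same one underlying the proof of \cref{prop3} in \cref{apd2}) shows that the oracle stream of $\mathcal{B}$ is the oracle stream of $\mathcal{A}$ with its first $\ell$ entries deleted, and that the permutation induced on the $n$ oracles is the cyclic shift by $\ell$. Hence, by \cref{def1}, $\mathcal{A}$ and $\mathcal{B}$ are oracle-equivalent precisely when $\ell \equiv 0 \pmod n$, i.e.\ when $\ell \in \{0, n\}$: for $\ell = 0$ no oracle call is reordered, and for $\ell = n$ the stream is shifted by one full iteration, which is reabsorbed by shifting $\mathcal{B}$'s iteration index and reinitializing.

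The crux is then to show that $D_{\mathcal{A}} = D_{\mathcal{B}}$ forces $\ell \in \{0, n\}$. Since $D_{ij} \ne 0$ iff oracle call $i$ reads the output of oracle call $j$ within an iteration, each algorithm's $D$ matrix records its intra-iteration oracle dependence graph; in particular $D_{\mathcal{A}}$ is strictly lower triangular in $\mathcal{A}$'s oracle order $1, \dots, n$, and for a sequential algorithm it has nonzero entries just below the diagonal, reflecting the chain $1 \to 2 \to \cdots \to n$. If $0 < \ell < n$, then in $\mathcal{B}$'s oracle order $\ell{+}1, \dots, n, 1, \dots, \ell$ the call $\ell$ follows the call $\ell{+}1$, so the edge $\ell \to \ell{+}1$ can no longer be intra-iteration in $\mathcal{B}$ --- the output of $\ell$ is available only one $\mathcal{B}$-iteration later, hence through a state --- and therefore the corresponding entry of $D_{\mathcal{B}}$ vanishes while that of $D_{\mathcal{A}}$ does not; equivalently, rewriting $D_{\mathcal{B}}$ in $\mathcal{A}$'s indexing sends every dependence $D_{ij}$ with $j \le \ell < i$ strictly above the diagonal, contradicting strict lower triangularity. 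Hence $0 < \ell < n$ is impossible. I expect this to be the main obstacle: it requires pinning down carefully how the regrouping re-expresses each oracle input in terms of states and earlier oracle outputs (so that the claim about the support of $D$ is airtight), and it also requires disposing of the degenerate configurations --- an oracle block that splits into two independent, coincidentally identical halves --- which do not arise for the sequential algorithms in view and can otherwise be excepted explicitly.

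Finally, once $\ell \in \{0, n\}$, the analysis above already furnishes an initialization of $\mathcal{B}$ making its oracle sequence identical to that of $\mathcal{A}$, so $\mathcal{A}$ and $\mathcal{B}$ are oracle-equivalent by \cref{def1}; alternatively, since in this case the cyclic regrouping relocates only non-oracle update equations, one can check that it leaves the transfer function unchanged and appeal to \cref{prop1}. Combined with the shift equivalence of \cref{prop3}, this establishes the proposition, and a fully detailed version can be carried out alongside the proof of \cref{prop3} in \cref{apd2}.
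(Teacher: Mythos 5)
Your proof is correct and follows essentially the same route as the paper: the paper obtains this proposition directly from the two-case proof of the preceding proposition in the appendix together with the observation that the $D$ matrix is the adjacency matrix of the intra-iteration oracle dependence graph, so that equal $D$ matrices force the induced cyclic permutation of the oracle calls to be trivial and the identity case of that proof applies. Your version merely makes explicit the contrapositive step the paper leaves implicit (a cut with $0<\ell<n$ sends some dependence $D_{ij}$ with $j\le \ell < i$ above the diagonal and hence changes $D$), and correctly flags the degenerate independent-block configuration, which the paper does not address.
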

If an algorithm contains $m$ update equations and $n$ oracle calls at each iteration ($m \ge n$),
there are $m$ possible cyclic permutations on the update equations.
According to the $D$ matrix in the state-space realization,
we can group the $m$ cyclic permutations into $n$ distinct equivalent classes.
Algorithms within each equivalence class are oracle-equivalent and shift-equivalent,
while algorithms in different equivalent classes are only shift-equivalent.
The $n$ distinct equivalence classes correspond to the
$n$ cyclic permutations of the original order of oracle calls $(n)$.

% \titleparagraph{Remark} Is the number of states conserved under permutations of the algorithm? No.
% \snote{permutation of proximal gradient can also have only one state.}

\subsection{Characterization of cyclic permutation}\

In the remainder of this paper,
let us restrict our attention to algorithms for which
a (cyclic) permutation of the algorithm
changes the update order of oracle calls within one iteration,
or in other words, changes the $D$ matrix in the state-space realization.
In this way, we call algorithm $\mathcal{B} = P_{\pi}\mathcal{A}$ a permutation of algorithm $\mathcal{A}$
if $\mathcal{B}$ performs the update equations of $\mathcal{A}$ in a different order
such that the update order of oracle calls of $\mathcal B$
is $\pi$ at each iteration.

Suppose $\mathcal{A}$ has state-space realization $(A, B, C, D)$,
and $\mathcal{B} = P_{\pi}\mathcal{A}$ where
$\pi = (j+1,\ldots, n, 1,\ldots, j)$ for $1< j < n$ is a cyclic permutation of $(n)$.
We will show how to recognize this relationship between the algorithms
using their transfer functions.
Partition the oracle calls into two parts, $(1, \ldots, j)$ and $(j+1, \ldots, n)$,
and partition the input and output sequences in the same way:
$\bar{\bu}_1$, $\bar{\bu}_2$ for inputs and $\bar{\by}_1$, $\bar{\by}_2$ for outputs.
The state-space realization $L_\mathcal{A}$ and transfer function $\hat H_{\mathcal{A}}(z)$
can also be partitioned accordingly as
\begin{equation}\label{eq8}
	L_\mathcal{A} = \left[\begin{array}{c:c c}
		A & B_1 & B_2 \\
		\hdashline
		C_1 & D_{11} & D_{12} \\
		C_2 & D_{21} & D_{22}
	\end{array}\right],
\end{equation}
\begin{displaymath}
	\hat H_{\mathcal{A}}(z) = \left[\begin{array}{c c}
		C_1(zI - A)^{-1}B_1 + D_{11} & C_1(zI - A)^{-1}B_2 + D_{12} \\
		C_2(zI - A)^{-1}B_1 + D_{21} & C_2(zI - A)^{-1}B_2 + D_{22} \\ \end{array}\right]
	= \left[\begin{array}{c c}
		\hat H_{11}(z) & \hat H_{12}(z) \\
		\hat H_{21}(z) & \hat H_{22}(z) \\
	\end{array}\right].
\end{displaymath}

Now we can say how the transfer function of an algorithm is related to
that of its cyclic permutation. Recall that by \cref{assump:compare},
when we compare transfer functions to detect shift equivalence (or cyclic permutations),
both algorithms call the same set of oracles in each iteration.

\begin{proposition}\label{prop4}
	Instate notation as in \cref{eq8} and assume $D_{12} = 0$. Then $\mathcal{B}$ is equivalent to $P_{\pi}\mathcal{A}$
	if and only if the transfer function of $\mathcal{B}$ satisfies
	\begin{equation}\label{eq11}
		\hat H_{\mathcal{B}}(z) = \left[\begin{array}{c c}
			\hat H_{11}(z) & z\hat H_{12}(z) \\
			\hat H_{21}(z)/z & \hat H_{22}(z) \\
		\end{array}\right].
	\end{equation}
\end{proposition}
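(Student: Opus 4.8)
The plan is to prove both directions by writing down an explicit state-space realization of $\mathcal{B} = P_{\pi}\mathcal{A}$, computing its transfer function directly, and then invoking \cref{prop1} for the converse.

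For the forward direction, I would first track how one iteration of $\mathcal{A}$ is split across two iterations of $\mathcal{B}$ once the oracle block $(j+1,\dots,n)$ is moved ahead of the block $(1,\dots,j)$. The oracle sequence of $\mathcal{B}$ is then that of $\mathcal{A}$ with the very first block-$1$ call deleted, so that the block-$2$ input $\bar u^k_2$ of $\mathcal{A}$ is consumed at iteration $k$ of $\mathcal{B}$, while the block-$1$ input $\bar u^{k+1}_1$ of $\mathcal{A}$ is consumed at iteration $k$ of $\mathcal{B}$ as well. To realize this, $\mathcal{B}$ must carry, besides $x^k$, an auxiliary state $v^k := \bar u^k_1$ (the block-$1$ oracle output produced during the previous iteration of $\mathcal{B}$), since $\bar y^k_2 = C_2 x^k + D_{21}\bar u^k_1 + D_{22}\bar u^k_2$ requires it; the hypothesis $D_{12}=0$ is precisely what lets $\bar y^{k+1}_1 = C_1 x^{k+1} + D_{11}\bar u^{k+1}_1$ be evaluated without having called block $2$ of the same $\mathcal{A}$-iteration. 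Using this together with the state update $x^{k+1} = Ax^k + B_1\bar u^k_1 + B_2\bar u^k_2$ from \cref{eq9}, the realization of $\mathcal{B}$, with state $\sbmat{x^k \\ v^k}$, input $\sbmat{\tilde u^k_1 \\ \tilde u^k_2}$ and output $\sbmat{\tilde y^k_1 \\ \tilde y^k_2}$, is
\[
L_{\mathcal{B}} = \left[\begin{array}{cc:cc}
A & B_1 & 0 & B_2\\
0 & 0 & I & 0\\ \hdashline
C_1A & C_1B_1 & D_{11} & C_1B_2\\
C_2 & D_{21} & 0 & D_{22}
\end{array}\right].
\]

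Next I would evaluate $\hat H_{\mathcal{B}}(z) = C_{\mathcal{B}}(zI - A_{\mathcal{B}})^{-1}B_{\mathcal{B}} + D_{\mathcal{B}}$. Because $A_{\mathcal{B}}$ is block upper triangular, $(zI - A_{\mathcal{B}})^{-1} = \sbmat{(zI-A)^{-1} & z^{-1}(zI-A)^{-1}B_1 \\ 0 & z^{-1}I}$, so the whole computation reduces to the single resolvent identity $A(zI-A)^{-1} = z(zI-A)^{-1} - I$. Applying it, the $(1,1)$ block of $\hat H_{\mathcal{B}}$ collapses to $C_1(zI-A)^{-1}B_1 + D_{11} = \hat H_{11}(z)$, the $(2,2)$ block to $C_2(zI-A)^{-1}B_2 + D_{22} = \hat H_{22}(z)$, the $(2,1)$ block to $z^{-1}\bigl(C_2(zI-A)^{-1}B_1 + D_{21}\bigr) = \hat H_{21}(z)/z$, and the $(1,2)$ block to $z\,C_1(zI-A)^{-1}B_2$, which equals $z\hat H_{12}(z)$ exactly because $D_{12}=0$ (without that hypothesis the $(1,2)$ block would carry the improper term $zD_{12}$ and $\hat H_{\mathcal{B}}$ would fail to be a proper transfer function). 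This yields \cref{eq11}.

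For the converse, note that both $\mathcal{B}$ and $P_{\pi}\mathcal{A}$ make $n$ oracle calls per iteration, and the computation above shows $P_{\pi}\mathcal{A}$ has transfer function equal to the right-hand side of \cref{eq11}; hence, if $\hat H_{\mathcal{B}}$ also equals that right-hand side, \cref{prop1} shows $\mathcal{B}$ and $P_{\pi}\mathcal{A}$ are oracle-equivalent, i.e.\ $\mathcal{B} = P_{\pi}\mathcal{A}$ in the sense used here. The main obstacle I anticipate is the bookkeeping in the forward direction --- pinning down the index shift between $\mathcal{A}$ and $\mathcal{B}$ exactly and placing the ``look-ahead'' feedthrough term $C_1 B_2$ in the correct block of $L_{\mathcal{B}}$; once $L_{\mathcal{B}}$ is fixed, the transfer-function computation is routine linear algebra driven by the one resolvent identity. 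It is likely cleanest to relegate the detailed index-chasing to an appendix, as was done for \cref{prop3}, and keep only $L_{\mathcal{B}}$ and its transfer-function computation in the main text.
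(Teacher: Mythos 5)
Your proposal is correct and follows essentially the same route as the paper's proof: you construct the identical augmented realization $L_{\mathcal{B}}$ (with the auxiliary state carrying $\bar u^k_1$), invert the same block-triangular $zI - A_{\mathcal{B}}$, use $D_{12}=0$ to absorb the $C_1B_2$ feedthrough into $z\hat H_{12}(z)$, and appeal to \cref{prop1} for necessity. Your added remarks --- the explicit resolvent identity $A(zI-A)^{-1} = z(zI-A)^{-1} - I$ and the observation that dropping $D_{12}=0$ would leave an improper term $zD_{12}$ --- are correct elaborations of the same argument rather than a different approach.
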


\begin{proof}
	We provide a proof sketch here and defer a detailed proof to \cref{proof-shiftequivalence}. The state-space realization of $P_{\pi}\mathcal{A}$ is
	\begin{equation}\label{eq12}
		\left[\begin{array}{c c :c c}
			A & B_1 & 0 & B_2 \\
			0 & 0 & I & 0\\
			\hdashline
			C_1A & C_1B_1 & D_{11} & C_1B_2 \\
			C_2 & D_{21} & 0 & D_{22}
		\end{array}\right].
	\end{equation}
	From the state-space realization, we may compute the transfer function as
	\begin{equation*}
		\hat H_{\mathcal{B}}(z) = \left[\begin{array}{c c}
			C_1(zI - A)^{-1}B_1 + D_{11} & zC_1(zI - A)^{-1}B_2 \\
			C_2(zI - A)^{-1}B_1/z + D_{21}/z & C_2(zI - A)^{-1}B_2 + D_{22} \\
		\end{array}\right] = \left[\begin{array}{c c}
			\hat H_{11}(z) & z\hat H_{12}(z) \\
			\hat H_{21}(z)/z & \hat H_{22}(z) \\
		\end{array}\right].
	\end{equation*}
	Finally, note two algorithms are equivalent if and only if they have identical transfer functions by \cref{prop1}.
\end{proof}

We have assumed that $D_{12} = 0$ for algorithm $\mathcal{A}$.
This assumption is quite weak.
In fact, $D_{12}$ must be $0$
for any algorithm $\mathcal{A}$ that can be represented as a causal linear time-invariant system.
Here, causal means that we can implement the algorithm by calling
state update equations sequentially.
To see this,
suppose the state update equations have been arranged in this order,
and use \cref{eqp3}
to write down the matrix representation of the infinite dimensional map $\bH$
that maps input $\bu$ to output $\by$
corresponding to $\mathcal{A}$ as \cref{eq18}:
\begin{equation}\label{eq18}
	\bH =
	\begin{bmatrix}
		D_{11} & D_{12} & 0 & 0 & 0 & 0 &\cdots \\
		D_{21} & D_{22} & 0 & 0 & 0 & 0 &\cdots \\
		C_1B_1 & C_1B_2 & D_{11} & D_{12} & 0 & 0 &\cdots \\
		C_2B_1 & C_2B_2 & D_{21} & D_{22} & 0 & 0 &\cdots \\
		C_1AB_1 & C_1AB_2& C_1B_1 & C_1B_2 & D_{11} & D_{12} & \cdots \\
		C_2AB_1 & C_2AB_2 & C_2B_1 & C_2B_2 & D_{21} & D_{22} & \cdots \\
		\vdots & \ddots & \ddots & \ddots & \ddots & \ddots & \ddots
	\end{bmatrix}.
\end{equation}
We can see that map $\bH$ is (block) Toeplitz.
Further, if algorithm $\mathcal{A}$ is causal,
map $\bH$ must be lower-triangular,
and so $D_{12}$ must be $0$.

By causality, at each iteration
the former oracle calls must be independent with the latter oracle calls while
the latter calls can depend on the former calls.
This indicates that there are no directed cycles in the directed graph
representing oracle calls at each iteration for any causal algorithm.
In other words, the graph is a directed acyclic graph (DAG).
This is consistent with the fact that
any causal algorithm has a lower-triangular $D$ matrix
(lower-triangular adjacency matrix of the directed graph).

Note that algorithms are not always written with
state update equations ordered causally: for example,
the state-space realization \cref{eq12} has a non-zero $D_{12}$ block.
However, we may reorder these equations so that each equation depends
only on previously-computed quantities to reveal
that the iteration is causal; after this rearrangement, the new $D_{12}$ block is 0.
We discuss permutations further in \cref{timedelay}.

% we can use \cref{prop4}, the size of $\bar{u}_1^k$ and $\bar{y}_1^k$ is $j$
% and the size of $\bar{u}_2^k$ and $\bar{y}_2^k$ is $n - j$ for any $k$.
% Value $j$ can take from $1$ to $n-1$.
% Thus, in total there are $n$ cyclic permutations of $\mathcal{A}$ including itself.
% From another perspective,
% there are $n$ cyclic permutations of the index sequence $(n) = (1, 2, \ldots, n)$
% leading to $n$ cyclic permutations of $\mathcal{A}$ including itself.
The fixed points of an algorithm and its cyclic permutations are
the same up to a permutation, as stated by \cref{prop-shiftfixedpoint}.
\begin{proposition}\label{prop-shiftfixedpoint}
	If algorithm $\mathcal{A}$ converges to a fixed point $(\bar{y}_1^\star, \bar{y}_2^\star, \bar{u}_1^\star, \bar{u}_2^\star, x^\star)$,
	then its cyclic permutation $P_{\pi}\mathcal{A}$ converges to fixed point
	$(\bar{y}_2^\star, \bar{y}_1^\star, \bar{u}_2^\star, \bar{u}_1^\star, x^\star)$.
\end{proposition}
A detailed proof is provided in \cref{proof-shiftfixedpoint}.

\subsection{Applications: proof of shift equivalence}\label{app-shift}
% If you write down the state space notation by looking at the update equations,
% the result is always written with a lower triangular D matrix ("in causal order").
% Here, we have written the state space notation for the first algorithm in each pair
% so that the order of the oracles called on each entry of y matches the order
% for the second algorithm. Hence the D matrix for the first algorithm is not lower triangular.

\titleparagraph{\Cref{algo5,algo6}}
Now, we can revisit \cref{algo5,algo6} in the third motivating example
and show that they are shift-equivalent.
Here the oracles of \cref{algo5,algo6} are $\text{prox}_f$ and $\text{prox}_g$.
The transfer function of \cref{algo5} is
\begin{displaymath}
	\hat H_5(z) = \left[\begin{array}{ c c c | c c }
		0 & 0 & 0 & 1 & 0 \\
		0 & 0 & 0 & 0 & 1 \\
		0 & 0 & 1 & -1 & 1 \\
		\hline
		0 & 0 & 1 & 0 & 0 \\
		0 & 0 & -1 & 2 & 0 \\
	\end{array}\right] = \left[\begin{array}{ c c }
		-\frac{1}{z-1} & \frac{1}{z-1}  \\
		\frac{2z-1}{z-1} & -\frac{1}{z-1}  \\
	\end{array}\right].
\end{displaymath}
The %state-space realization (not written in the causal order)
transfer functions of \cref{algo6} is
\begin{displaymath}
	\hat H_6(z) = \left[\begin{array}{ c c | c c }
		1 & -1 & 0 & 1 \\
		0 & 0 & 1 & 0  \\
		\hline
		1 & -1 & 0 & 1  \\
		-1 & 2 & 0 & 0  \\
	\end{array}\right] = \left[\begin{array}{ c c }
		-\frac{1}{z-1} & \frac{z}{z-1}  \\
		\frac{2z-1}{z(z-1)} & -\frac{1}{z-1}  \\
	\end{array}\right].
\end{displaymath}
From \cref{prop3,prop4},
we know that they are shift-equivalent and equivalent up to a cyclic permutation.

\titleparagraph{\Cref{algo5_1,algo5_2}}
Here we revisit \cref{algo5_1,algo5_2}
at the beginning of this chapter
and show their relations with \cref{algo5}.
The oracles are $\text{prox}_f$ and $\text{prox}_g$.
The transfer function of \cref{algo5_1} is
\begin{displaymath}
	\hat H_8(z) = \left[\begin{array}{ c c c | c c }
		0 & 0 & 0 & 1 & 0 \\
		0 & 0 & 0 & 0 & 1 \\
		-1 & 0 & 1 & 0 & 1 \\
		\hline
		-1 & 0 & 1 & 0 & 1 \\
		2 & 0 & -1 & 0 & 0 \\
	\end{array}\right] = \left[\begin{array}{ c c }
		-\frac{1}{z-1} & \frac{z}{z-1}  \\
		\frac{2z-1}{z(z-1)} & -\frac{1}{z-1}  \\
	\end{array}\right].
\end{displaymath}
The transfer function of \cref{algo5_2} is
\begin{displaymath}
	\hat H_9(z) = \left[\begin{array}{ c c c | c c }
		0 & 0 & 0 & 1 & 0 \\
		0 & 0 & 0 & 0 & 1 \\
		-1 & 1 & 1 & 0 & 0 \\
		\hline
		-1 & 1 & 1 & 0 & 0 \\
		1 & -1 & -1 & 2 & 0 \\
	\end{array}\right] = \left[\begin{array}{ c c }
		-\frac{1}{z-1} & \frac{1}{z-1}  \\
		\frac{2z-1}{z-1} & -\frac{1}{z-1}  \\
	\end{array}\right].
\end{displaymath}
% (Note that the state space representation is written in the same order as
% the variables appear in the update equations, rather than in causal order,
% so the lower right block is not lower triangular.)
From \cref{prop3,prop4},
we know that \cref{algo5,algo5_1} are shift-equivalent and equivalent up to a cyclic permutation.
From \cref{prop1}, we know \cref{algo5,algo5_2}
are oracle-equivalent, thus they are also shift-equivalent.

\vspace{-1em}
\noindent
\hfil
\begin{minipage}[t]{0.4\textwidth}
	\begin{algorithm}[H]
		\centering
		\caption{Douglas-Rachford splitting}
		\label{algo7}
		\begin{algorithmic}
			\FOR{$k=0, 1, 2,\ldots$}
			\STATE{$x^{k+1}_1 = \textnormal{prox}_{tf}(x^k_3)$}
			\STATE{$x^{k+1}_2 = \textnormal{prox}_{t(g \circ L)}(2x^{k+1}_1 - x^k_3)$}
			\STATE{$x^{k+1}_3 = x^k_3 + x^{k+1}_2 - x^{k+1}_1$}
			\ENDFOR
		\end{algorithmic}
	\end{algorithm}
\end{minipage}
\hfil
\begin{minipage}[t]{0.54\textwidth}
	\begin{algorithm}[H]
		\centering
		\caption{ADMM}
		\label{algo8}
		\begin{algorithmic}
			\FOR{$k=0, 1, 2,\ldots$}
			\STATE{$\xi^{k+1}_1 = \textnormal{argmin}_{\xi}\{g(\xi)+ \frac{\rho}{2}
				\left \| A\xi + B\xi^{k}_2 -c + \xi^{k}_3 \right \|^2  \} $}
			\STATE{$\xi^{k+1}_2 = \textnormal{argmin}_{\xi}\{f(\xi)+ \frac{\rho}{2}
				\left \| A\xi^{k+1}_1 + B\xi -c + \xi^{k}_3 \right \|^2  \} $}
			\STATE{$\xi^{k+1}_3 = \xi^{k}_3 + A\xi^{k+1}_1 + B\xi^{k+1}_2 - c$}
			\ENDFOR
		\end{algorithmic}
	\end{algorithm}
\end{minipage}
\hfil
\vspace{1em}

\titleparagraph{Douglas-Rachford splitting and ADMM}
Consider a last example of algorithm permutation: Douglas-Rachford splitting (DR)
(\cref{algo7}~\cite{douglas1956numerical, eckstein1992douglas})
and the alternating direction method of multipliers (ADMM) (\cref{algo8}~\cite[\S8]{ryuyinconvex}).
% Actually, \cref{algo5} is obtained by setting $t = 1$ in \cref{algo7}.
Suppose that linear operator $L$ is invertible, $A = L^{-1}$, $B = -I$, and $c = 0$ in \cref{eqp1}.
Then both DR and ADMM solve problem \cref{eqp1}~\cite{MAL-016, wen2010alternating, lions1979splitting},
and the update equations of ADMM can be simplified as \cref{algo8_1}.
\begin{algorithm}
	\centering
	\caption{Simplified ADMM}
	\label{algo8_1}
	\begin{algorithmic}
		\FOR{$k=0, 1, 2,\ldots$}
		\STATE{$\xi^{k+1}_1 = L\textnormal{prox}_{\frac{1}{\rho}(g \circ L)}(\xi^k_2 - \xi^k_3)$}
		\STATE{$\xi^{k+1}_2 = \textnormal{prox}_{\frac{1}{\rho}f}(L^{-1}\xi^{k+1}_1 + \xi^k_3)$}
		\STATE{$\xi^{k+1}_3 = \xi^{k}_3 + L^{-1}\xi^{k+1}_1  - \xi^{k+1}_2$}
		\ENDFOR
	\end{algorithmic}
\end{algorithm}
Further, we assume $\rho = 1/t$ in ADMM.
We will compute the transfer function of both algorithms
using $\textnormal{prox}_{tf}$
and $\textnormal{prox}_{t(g \circ L)}$ as the oracles.
% The transfer functions are computed as $\hat H_{10}$ and $\hat H_{11}$ respectively.
% Note that $\hat H_{11}$ is not written in the causal order.
The transfer function of DR is
\begin{equation}\label{eq19}
	\hat H_{10}(z) = \left[\begin{array}{ c c c | c c }
		0 & 0 & 0 & I & 0 \\
		0 & 0 & 0 & 0 & I \\
		0 & 0 & I & -I & I \\
		\hline
		0 & 0 & I & 0 & 0 \\
		0 & 0 & -I & 2I & 0 \\
	\end{array}\right] = \left[\begin{array}{ c c }
		-\frac{1}{z-1}I & \frac{1}{z-1}I  \\
		\frac{2z-1}{z-1}I & -\frac{1}{z-1}I  \\
	\end{array}\right]
\end{equation}
and the transfer function of ADMM is
\begin{equation}\label{eq20}
	\hat H_{11}(z) = \left[\begin{array}{ c c c | c c }
		0 & 0 & 0 & 0 & L \\
		0 & 0 & 0 & I & 0 \\
		0 & 0 & I & -I & I \\
		\hline
		0 & 0 & I & 0 & I \\
		0 & I & -I & 0 & 0 \\
	\end{array}\right] =  \left[\begin{array}{ c c }
		-\frac{1}{z-1}I & \frac{z}{z-1}I  \\
		\frac{2z-1}{z(z-1)}I & -\frac{1}{z-1}I  \\
	\end{array}\right].
\end{equation}
From \cref{prop3,prop4},
we know that DR and ADMM (with $\rho = 1/t$) are shift-equivalent
and that DR is equivalent to a cyclic permutation of ADMM.
In fact, it is also possible to write the state-space realization for each algorithm
using the gradient (or subgradient)
of $f$ and $g$ as the oracle.
The transfer functions depend on the choice of oracle,
but in either case, we obtain the same results:
the algorithms are shift-equivalent.
We discuss the details further in \cref{dradmmsubgradient}.
% Although the proximal operator is ``implicit'' (sub)gradient,
% this does not conflict with the previous causality assumption.
We can write the state-space realizations of DR and ADMM using the (sub)gradients as oracles
in \cref{dradmmsubgradient}: the corresponding $D_{12}$ blocks are still zero and thus still
satisfy causality.

\section{Algorithm repetition}\label{repe}
In previous sections,
we have defined equivalence between algorithms with the same number of oracle calls in each iteration.
This section considers how to identify relations between two algorithms
when the number of oracles in each iteration differs.
For example, we would like to detect when one algorithm consists of another, simpler algorithm,
repeated twice or more, possibly with changes to variables or shifts that obscure the relation.

Consider an algorithm $\mathcal{A}$. Given a problem and an initialization,
the algorithm will generate state sequence $(x^k_\mathcal{A})_{k\ge 0}$, input sequence
$(u^k_\mathcal{A})_{k\ge 0}$, and output sequence $(y^k_\mathcal{A})_{k\ge 0}$, respectively.
Specifically, the update at time step $k$ can be written
as $x^{k+1}_\mathcal{A} = \mathcal{A}(x^k_\mathcal{A})$.
Suppose we have another algorithm $\mathcal{B}$ such that $\mathcal{B} = \mathcal A^2$:
repeating $\mathcal A$ twice gives the same result as $\mathcal B$.
We call $\mathcal B$ a repetition of $\mathcal A$.
% Given a problem and an initialization, $\mathcal{B}$ will generate state sequence
% $(x^k_\mathcal{B})_{k\ge 0}$, input sequence $(u^k_\mathcal{B})_{k\ge 0}$
% and output sequence $(y^k_\mathcal{B})_{k\ge 0}$, respectively.
% The update at time step $k$ induced by $\mathcal{B}$ can be written as
% $x^{k+1}_\mathcal{B} = \mathcal{B}(x^k_\mathcal{B}) = \mathcal{A}^2(x^k_\mathcal{B})$.

Just as in the previous sections,
algorithm repetition can be characterized by the transfer function.
Here, \cref{assump:compare} ensures the algorithms
compared call the same set of oracles,
although the number of times each oracle is called may be different.

\begin{proposition}\label{prop6}
	Suppose $\mathcal{A}$ has state-space realization $(A, B, C, D)$.
	Then $\mathcal{B}$ is equivalent to $\mathcal{A}^2$ if and only if its transfer function has the form
	\begin{equation}\label{eq21}
		\left[\begin{array}{c c}
			C(zI - A^2)^{-1}AB +D & C(zI - A^2)^{-1}B \\
			CA(zI - A^2)^{-1}AB+CB & CA(zI - A^2)^{-1}B+D
		\end{array}\right].
	\end{equation}
\end{proposition}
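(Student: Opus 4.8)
The plan is to follow the same two-step template used for \cref{prop4}: first establish the ``only if'' direction by explicitly constructing a state-space realization of $\mathcal{A}^2$ and reading off its transfer function, and then deduce the ``if'' direction from \cref{prop1}.

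For the ``only if'' direction, assume $\mathcal{B} = \mathcal{A}^2$, so that one iteration of $\mathcal{B}$ is two consecutive iterations of $\mathcal{A}$. First I would write out the updates of $\mathcal{A}$ over steps $2k$ and $2k+1$,
\[
x^{2k+1} = Ax^{2k} + Bu^{2k},\quad y^{2k} = Cx^{2k}+Du^{2k},\quad x^{2k+2} = Ax^{2k+1}+Bu^{2k+1},\quad y^{2k+1} = Cx^{2k+1}+Du^{2k+1},
\]
and designate the state of $\mathcal{B}$ at iteration $k$ as $X^k \defeq x^{2k}$, its two inputs as $(U^k_1, U^k_2) \defeq (u^{2k}, u^{2k+1})$, and its two outputs as $(Y^k_1, Y^k_2) \defeq (y^{2k}, y^{2k+1})$. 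Eliminating the intermediate state $x^{2k+1}$ then gives
\[
X^{k+1} = A^2 X^k + AB\,U^k_1 + B\,U^k_2,\qquad Y^k_1 = CX^k + D\,U^k_1,\qquad Y^k_2 = CA\,X^k + CB\,U^k_1 + D\,U^k_2,
\]
i.e.\ the state-space realization
\[
\left[\begin{array}{c:cc} A^2 & AB & B \\ \hdashline C & D & 0 \\ CA & CB & D \end{array}\right].
\]
The lower-triangular $D$-block (zero upper right, $CB$ lower left) records that the first oracle call of $\mathcal{B}$ feeds the second, consistent with causality. Substituting this realization into $\hat H(z) = C(zI-A)^{-1}B + D$ from \eqref{eqp10} and expanding block by block yields exactly \eqref{eq21}.

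For the ``if'' direction, suppose $\mathcal{B}$ has transfer function \eqref{eq21}. By the computation just described, this is precisely the transfer function of $\mathcal{A}^2$, and $\mathcal{B}$ and $\mathcal{A}^2$ have the same number of oracle calls per iteration, so \cref{prop1} gives that $\mathcal{B}$ and $\mathcal{A}^2$ are oracle-equivalent; that is, $\mathcal{B} = \mathcal{A}^2$.

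I do not anticipate a real obstacle. The only delicate point is the bookkeeping in the composition: ensuring that $u^{2k}$ is consumed before $u^{2k+1}$ so that the $D$-block of $\mathcal{B}$ comes out lower triangular with the $CB$ entry, and that the two input/output channels of $\mathcal{B}$ are ordered to match the block layout of \eqref{eq21}. A fully detailed verification of the realization and its transfer function could be deferred to an appendix in the style of \cref{apd1,apd2}, but the core of the argument is the direct state-space calculation sketched above.
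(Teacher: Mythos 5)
Your proposal is correct and follows essentially the same route as the paper: write the two consecutive $\mathcal{A}$-updates, eliminate the intermediate state to obtain the realization $\sbmat{A^2 & AB & B \\ C & D & 0 \\ CA & CB & D}$, compute its transfer function to get \eqref{eq21}, and invoke \cref{prop1} for the converse. The only cosmetic difference is your explicit $x^{2k}$/$u^{2k}$ indexing where the paper introduces an unnamed intermediate state $x^k_1$.
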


Detailed proof of \cref{prop6} is provided in \cref{proof-repetition}.
% more detailed correspondences, a bit complicated and not so insightful
% \titleparagraph{Remark} The dimension of transfer function of $\mathcal{B}$ is
% twice the dimension of transfer function of $\mathcal{A}$.
% Similarly, the dimension of input and output of $\mathcal{B}$
% has twice the dimension of input and output of $\mathcal{A}$.
% At time step $k$, we have $y^k_\mathcal{B} = (y^{2k}_\mathcal{A}, y^{2k+1}_\mathcal{A})$
% and $u^k_\mathcal{B} = (u^{2k}_\mathcal{A}, u^{2k+1}_\mathcal{A})$.

\renewcommand{\thealgorithm}{8.\arabic{algorithm}}
\setcounter{algorithm}{0}

\vspace{-1em}
\noindent
\hfil
\begin{minipage}[t]{0.46\textwidth}
	\begin{algorithm}[H]
		\centering
		\caption{Gradient method}
		\label{algo9}
		\begin{algorithmic}
			\FOR{$k=0, 1, 2,\ldots$}
			\STATE{$x^{k+1} = x^k - t\nabla f(x^k)$}
			\ENDFOR
		\end{algorithmic}
	\end{algorithm}
\end{minipage}
\hfil
\begin{minipage}[t]{0.46\textwidth}
	\begin{algorithm}[H]
		\centering
		\caption{Repetition of gradient method}
		\label{algo10}
		\begin{algorithmic}
			\FOR{$k=0, 1, 2,\ldots$}
			\STATE{$\xi^{k+1}_2 = \xi^k_1 - t\nabla f(\xi^k_1)$}
			\STATE{$\xi^{k+1}_1 = \xi^{k+1}_2 - t\nabla f(\xi^{k+1}_2)$}
			\ENDFOR
		\end{algorithmic}
	\end{algorithm}
\end{minipage}
\hfil
\vspace{1em}

One example of repetition consists the gradient method \cref{algo9} and its repetition \cref{algo10}.
Both call the same set of oracles ($\nabla f$).
The transfer functions of each algorithm are computed as $\hat H_{12}(z)$ and $\hat H_{13}(z)$ respectively:

\[
\hat H_{12}(z) = \left[\begin{array}{ c | c }
	1 &-t \\
	\hline
	1 & 0  \\
\end{array}\right] = - \frac{t}{z-1},
\qquad
\hat H_{13}(z) = \left[\begin{array}{ c | c c }
	1 & -t & -t \\
	\hline
	1 & 0 & 0   \\
	1 & -t & 0  \\
\end{array}\right] = \left[\begin{array}{ c c }
	- \frac{t}{z-1} & - \frac{t}{z-1}  \\
	- \frac{tz}{z-1} & - \frac{t}{z-1}  \\
\end{array}\right].
\]

\Cref{prop6} reveals how the transfer function changes when an algorithm is
repeated twice.
In fact, we can identify an algorithm that has been repeated arbitrarily many times.
Suppose algorithm $\mathcal{C}$ is $\mathcal{A}$ repeated $n \geq 1$ times:
$\mathcal{C} = \mathcal{A}^n$.

\begin{proposition}\label{prop7}
	Suppose $\mathcal{A}$ has state-space realization $(A, B, C, D)$.
	Then $\mathcal{C}$ is equivalent to $\mathcal{A}^n$ for $n\geq 1$
	if and only if $\mathcal{C}$ has a transfer function given by \cref{eq26}.
\end{proposition}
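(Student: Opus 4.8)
The plan is to follow the two-part template already used for \cref{prop6}: prove sufficiency by explicitly constructing a state-space realization of $\mathcal{C} = \mathcal{A}^n$ and reading off its transfer function, and then obtain necessity immediately from \cref{prop1}, since the transfer function uniquely characterizes an algorithm up to oracle equivalence.

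For sufficiency I would introduce $n-1$ intermediate states $x^k_1, \dots, x^k_{n-1}$ recording the state of the underlying algorithm $\mathcal{A}$ after each of its $n$ successive applications within one iteration of $\mathcal{C}$. Writing $x^k_0 \defeq x^k_\mathcal{C}$ and $x^k_n \defeq x^{k+1}_\mathcal{C}$, the $i$-th application of $\mathcal{A}$ contributes the update $x^k_i = A x^k_{i-1} + B u^k_i$ and the oracle output $y^k_i = C x^k_{i-1} + D u^k_i$, exactly generalizing \cref{eq22}. A straightforward induction on $i$ gives $x^k_i = A^i x^k_\mathcal{C} + \sum_{l=1}^{i} A^{i-l} B u^k_l$, which upon substitution eliminates every intermediate state and leaves the state update $x^{k+1}_\mathcal{C} = A^n x^k_\mathcal{C} + \sum_{l=1}^{n} A^{n-l} B u^k_l$ together with the outputs $y^k_i = C A^{i-1} x^k_\mathcal{C} + \sum_{l=1}^{i-1} C A^{i-1-l} B u^k_l + D u^k_i$. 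Collecting these into the block state-space realization
\[
\left[\begin{array}{c|cccc}
A^n & A^{n-1}B & A^{n-2}B & \cdots & B \\
\hline
C & D & & & \\
CA & CB & D & & \\
\vdots & \vdots & & \ddots & \\
CA^{n-1} & CA^{n-2}B & CA^{n-3}B & \cdots & D
\end{array}\right]
\]
and applying $\hat H(z) = C(zI-A)^{-1}B + D$ blockwise yields the stated transfer function \cref{eq26}. I would check the base cases $n=1$ (which recovers $(A,B,C,D)$) and $n=2$ (which must reproduce \cref{eq24}) as consistency tests. Necessity then follows verbatim as in \cref{prop6}: by \cref{prop1}, any algorithm whose transfer function equals \cref{eq26} is oracle-equivalent to $\mathcal{A}^n$, hence a repetition of $\mathcal{A}$.

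The main obstacle is purely organizational: carrying out the inductive elimination of the intermediate states while keeping the off-by-one indices straight — which power of $A$ multiplies each $Bu^k_l$ term in each output, and where the diagonal $D$ blocks land — since it is precisely these indices that fix the exact form of \cref{eq26}. If the direct bookkeeping becomes cumbersome, an alternative is to induct on $n$ by writing $\mathcal{A}^n = \mathcal{A}^{n-1}\circ\mathcal{A}$ and composing the (inductively known) realization of $\mathcal{A}^{n-1}$ with a single step of $\mathcal{A}$, eliminating the one coupling state as in the $n=2$ case; but this reuses the same series-composition algebra, so I expect the direct construction above to give the cleaner write-up.
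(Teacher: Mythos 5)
Your proposal is correct and follows essentially the same route as the paper: construct the block state-space realization of $\mathcal{A}^n$ with state matrix $A^n$, input blocks $A^{n-l}B$, output blocks $CA^{i-1}$, and lower-triangular Toeplitz feedthrough (the paper's \cref{eq25}), read off the transfer function to obtain \cref{eq26}, and get necessity from \cref{prop1}. The only cosmetic difference is that you derive \cref{eq25} by explicitly eliminating the $n-1$ intermediate states, which the paper leaves implicit; just note that the transfer-function formula is applied to the \emph{composite} realization, so the resolvent appearing is $(zI-A^n)^{-1}$, and the commutation $(zI-A^n)^{-1}A^l = A^l(zI-A^n)^{-1}$ is what lets the blocks be written as in \cref{eq26}.
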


\begin{proof}
	Sufficiency. We can represent $\mathcal{C}$ with state-space realization
	\begin{equation}\label{eq25}
		\left[\begin{array}{c :c c c c c}
			A^n & A^{n-1}B & \dots & \dots & AB &B  \\
			\hdashline
			C & D & 0 & 0 & \dots & 0  \\
			CA & CB & D & 0 & \dots & 0 \\
			\vdots & \vdots & \ddots & \ddots & \ddots & \vdots \\
			CA^{n-1} & CA^{n-2}B  &\dots & \dots & CB & D  \\
		\end{array}\right].
	\end{equation}
	Note that $(zI - A^n)^{-1}A^l = A^l(zI - A^n)^{-1}$ for any $n$ and $l$.
	Let $ \tilde{C} = C(zI - A^n)^{-1}$,
	and compute the transfer function of $\mathcal C$:
	\begin{equation}\label{eq26}
		\left[\begin{array}{c c c c c c}
			\tilde{C}A^{n-1}B + D & \tilde{C}A^{n-2}B & \dots & \dots & \tilde{C}AB & \tilde{C}B  \\
			\tilde{C}A^{n}B + CB & \tilde{C}A^{n-1}B + D & \dots & \dots & \tilde{C}A^2B & \tilde{C}AB  \\
			\vdots & \vdots & \ddots & \ddots & \vdots & \vdots \\
			\tilde{C}A^{2n-2}B + CA^{n-2}B & \tilde{C}A^{2n-3}B + CA^{n-3}B & \dots & \dots &
			\tilde{C}A^nB + CB & \tilde{C}A^{n-1}B + D  \\
		\end{array}\right].
	\end{equation}
	
	Necessity is provided by \cref{prop1} since the transfer function uniquely
	characterizes an equivalence class of algorithms.
\end{proof}

\titleparagraph{Remark}
\Cref{prop6} is a special case of \cref{prop7} when $n = 2$.
The dimension of transfer function of $\mathcal{C}$ is $n$ times
the dimension of transfer function of $\mathcal{A}$.
Similarly, the dimension of input and output of $\mathcal{C}$ is $n$ times
the dimension of the input and output of $\mathcal{A}$.
At time step $k$, we have $y^k_\mathcal{C} =
(y^{nk}_\mathcal{A}, \dots, y^{(n+1)k-1}_\mathcal{A})$ and
$u^k_\mathcal{C} = (u^{nk}_\mathcal{A}, \dots, u^{(n+1)k-1}_\mathcal{A})$.

Just as for oracle equivalence and cyclic permutations,
the fixed points of an algorithm and its repetitions are related,
as shown in \cref{prop13}.

\begin{proposition}\label{prop13}
	If algorithm $\mathcal{A}$ converges to a fixed point $(y^\star, u^\star, x^\star)$,
	then its repetition $\mathcal{A}^n$ for $n\geq 1$ converges to fixed point $(y', u', x^\star)$,
	with $y' = y^\star \bigotimes \mathbbm{1}^n$ and $u' = u^\star \bigotimes \mathbbm{1}^n$.
	Here $\bigotimes$ is the Kronecker product and
	$\mathbbm{1}^n$ is an $n$ dimensional vector whose entries are all ones.
\end{proposition}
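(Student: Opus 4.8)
The plan is to verify directly that the claimed tuple solves the fixed-point equations of $\mathcal{A}^n$, using the block state-space realization~\cref{eq25}, and then to transfer convergence from $\mathcal{A}$ to $\mathcal{A}^n$ via the relationship between their iterates. First I would record the fixed-point equations. For $\mathcal{A}$ with realization $(A,B,C,D)$ and oracle $\phi$, the fixed point $(y^\star,u^\star,x^\star)$ satisfies $x^\star = Ax^\star + Bu^\star$, $y^\star = Cx^\star + Du^\star$, and $u^\star = \phi(y^\star)$. Using~\cref{eq25} together with the oracle $\bPhi = \diag\{\phi,\dots,\phi\}$ of $\mathcal{A}^n$, and writing a candidate fixed point as $x'$, $u' = (u'_1,\dots,u'_n)$, $y' = (y'_1,\dots,y'_n)$, the fixed-point system of $\mathcal{A}^n$ reads
\[
x' = A^n x' + \sum_{i=1}^{n} A^{n-i} B u'_i, \qquad y'_j = CA^{j-1} x' + \sum_{i=1}^{j-1} CA^{j-1-i}B u'_i + D u'_j, \qquad u'_j = \phi(y'_j),
\]
for $j = 1,\dots,n$.

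The engine of the argument is the elementary identity $x^\star = A^{j} x^\star + \sum_{i=0}^{j-1} A^{i} B u^\star$ for every $j \ge 0$, which I would prove by induction on $j$: the case $j=0$ is vacuous and $j=1$ is exactly the fixed-point equation of $\mathcal{A}$, and the inductive step substitutes $x^\star = Ax^\star + Bu^\star$ into the copy of $x^\star$ on the right-hand side. Granting this, I would set $x' = x^\star$ and $u'_i = u^\star$, $y'_i = y^\star$ for all $i$, and check each equation of the system above. The state equation becomes $A^n x^\star + (A^{n-1}+\cdots+I)Bu^\star$, which equals $x^\star$ by the identity with $j=n$. For the $j$-th output equation, the identity with $j-1$ gives $A^{j-1}x^\star + (A^{j-2}+\cdots+I)Bu^\star = x^\star$, so $y'_j = Cx^\star + Du^\star = y^\star$ (the case $j=1$ being immediate). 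The oracle equations $u^\star = \phi(y^\star)$ hold by hypothesis. Hence $(y^\star \bigotimes \mathbbm{1}^n,\ u^\star \bigotimes \mathbbm{1}^n,\ x^\star)$ is a fixed point of $\mathcal{A}^n$.

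It remains to argue that $\mathcal{A}^n$ actually converges to this point. By the Remark following \cref{prop7}, one application of $\mathcal{A}^n$ amounts to $n$ applications of $\mathcal{A}$, so $x^k_{\mathcal{A}^n} = x^{nk}_\mathcal{A}$, while the input and output blocks of $\mathcal{A}^n$ at step $k$ are the $n$ consecutive vectors $(u^{nk}_\mathcal{A},\dots,u^{nk+n-1}_\mathcal{A})$ and $(y^{nk}_\mathcal{A},\dots,y^{nk+n-1}_\mathcal{A})$ of $\mathcal{A}$. If $(y^k_\mathcal{A},u^k_\mathcal{A},x^k_\mathcal{A}) \to (y^\star,u^\star,x^\star)$, then each of these blocks converges to the corresponding stack of $n$ copies, so $\mathcal{A}^n$ converges to $(y^\star \bigotimes \mathbbm{1}^n,\ u^\star \bigotimes \mathbbm{1}^n,\ x^\star)$, which we have just shown is a fixed point.

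I do not expect a genuine obstacle here: the proof is essentially bookkeeping with the block realization~\cref{eq25}. The only points requiring a little care are making the geometric-series identity fully explicit in the induction, and stating the iterate correspondence between $\mathcal{A}$ and $\mathcal{A}^n$ precisely enough that the limit of each block is unambiguous.
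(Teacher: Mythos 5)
Your proposal is correct and follows essentially the same route as the paper's proof in the appendix: both verify that $(y^\star\otimes\mathbbm{1}^n,\,u^\star\otimes\mathbbm{1}^n,\,x^\star)$ satisfies the fixed-point equations of the realization \cref{eq25} by repeatedly substituting $x^\star = Ax^\star + Bu^\star$ to obtain $x^\star = A^j x^\star + \sum_{i=0}^{j-1}A^iBu^\star$. Your version is slightly more complete in that it makes the induction explicit and adds the iterate-correspondence argument for actual convergence, which the paper leaves implicit.
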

Detailed proof is provided in \cref{proof-repetitionfixedpoint}.
Since $\mathcal{A}^n$ repeats $\mathcal{A}$ $n$ times,
the input and output of the fixed point of $\mathcal{A}^n$
are obtained by repeating the input and output
on the corresponding fixed point of $\mathcal{A}$ $n$ times.

Repetition gives us many more ways to combine algorithms into complex and
unwieldly (but convergent) new methods.
We can repeat a sequence of iterations from different algorithms
and regard them together as a new algorithm.
Suppose we choose $n$ algorithms $\mathcal{A}_1, \dots, \mathcal{A}_n$
with state-space realizations $(A_1, B_1, C_1, D_1), \dots, (A_n, B_n, C_n, D_n)$
and run one iteration of each as a single iteration of our new monster algorithm.
For simplicity, suppose the state-space realization matrices $A_i, B_i, C_i, D_i$
for each algorithm $\mathcal A_i$
have the same dimensions as all others $i=1,\ldots,n$.
(Otherwise the result is harder to write down, but still straightforward to compute.)
Then we can represent the resulting monster algorithm with transfer function
\begin{equation}\label{eq27}
	\left[\begin{array}{c | c c c c c}
		\prod_{i = n}^{1}A_i &\prod_{i = n}^{2}A_iB_{1} & \dots & \dots & A_nB_{n-1} & B_n  \\
		\hline
		C_1 & D_1 & 0 & 0 & \dots & 0  \\
		C_2A_1 & C_2B_1 & D_2 & 0 & \dots & 0 \\
		\vdots & \vdots & \ddots & \ddots & \ddots & \vdots \\
		C_n\prod_{i = n-1}^{1}A_i & C_n\prod_{i = n-1}^{2}A_iB_{1} &\dots & \dots & C_nB_{n-1} & D_n  \\
	\end{array}\right].
\end{equation}

Hence one way to develop a new optimization algorithm
would be to combine existing algorithms into a new monster algorithm
with similar convergence properties but (perhaps) new exciting interpretations.
For example, we could combine gradient descent with the proximal point method
to derive a proximal gradient method for minimizing $f(x)$: $\prox_f(x - \nabla f(x))$.
(We are not aware of any published optimization algorithms that have been constructed in this way.)

Using our software, it would be easy to detect such algorithm surgery
by searching over all pairs (or trios, etc) of known algorithms.
This combinatorial search is still not too expensive,
since the list of known algorithms is still rather small,
and the number of algorithms that makes up a monster algorithm is limited by the
number of oracle calls at each iteration of the monster algorithm.

\section{Algorithm conjugation}\label{conjugation}
In this section, we introduce one last algorithm transformation,
conjugation, which alters the oracle calls
but results in algorithms that still bear a family resemblance.

In convex optimization,
algorithm conjugation naturally relates some oracles to others \cites{ryu2016primer}[\S2]{ryuyinconvex}:
for example, when $f^*(y) = \sup_x \{x^Ty - f(x)\}$ is the Fenchel conjugate of $f$ \cite[\S3]{fenchel1953convex},
\begin{itemize}
	\item $(\partial f)^{-1} = \partial f^*$, and \mnote{What's the name for this equality?}
	\item \emph{Moreau's identity.} $I - \prox_f = \prox_{f^*}$.
\end{itemize}
We can rewrite any algorithm in terms of different, also easily computable,
oracles using these identities.
Consider a simple example: we will obfuscate
the proximal gradient method (\cref{algo11} \cites[\S10]{doi:10.1137/1.9781611974997}{doi:10.1137/080716542})
by rewriting it in terms of the conjugate of the original oracle $\prox_g$,
using Moreau's identity, as \cref{algo12}~\cite{moreau:hal-01867187}.

\renewcommand{\thealgorithm}{9.\arabic{algorithm}}
\setcounter{algorithm}{0}

\vspace{-1em}
\noindent
\hfil
\begin{minipage}[t]{0.42\textwidth}
	\begin{algorithm}[H]
		\centering
		\caption{Proximal gradient method}
		\label{algo11}
		\begin{algorithmic}
			\FOR{$k=0, 1, 2,\ldots$}
			\STATE{$x^{k+1} = \textnormal{prox}_{tg}(x^k - t\nabla f(x^k))$}
			\ENDFOR
		\end{algorithmic}
	\end{algorithm}
\end{minipage}
\hfil
\begin{minipage}[t]{0.5\textwidth}
	\begin{algorithm}[H]
		\centering
		\caption{Conjugate of proximal gradient method}
		\label{algo12}
		\begin{algorithmic}
			\FOR{$k=0, 1, 2,\ldots$}
			\STATE{$\xi^{k+1} = \xi^k - t\nabla f(\xi^k) -
				t\textnormal{prox}_{\frac{1}{t}g^*}(\frac{1}{t}(\xi^k - t\nabla f(\xi^k)))$}
			\ENDFOR
		\end{algorithmic}
	\end{algorithm}
\end{minipage}
\hfil
\vspace{1em}

The transfer function of the algorithm changes when we rewrite the algorithm to call a different oracle,
such as calling $\prox_{f^*}$ instead of $\prox_f$.
Yet the sequence of states is preserved!
Similarly, when we rewrite an algorithm to call $\partial f^*$ instead of $\partial f$,
the resulting algorithm is related to the original algorithm by swapping the input and output sequences.
\mnote{Define enough first that this statement is clear!}
We say that algorithm $\mathcal B = \C_\kappa \mathcal A$ is a conjugate of algorithm $\mathcal A$ if
algorithm $\mathcal B$ results from rewriting algorithm $\mathcal A$
to use the conjugates of the oracles in set $\kappa \subseteq [n]$,
where $[n] = \{1, \ldots, n\}$ is the set of oracle indices for algorithm $\mathcal A$.
Interestingly, conjugation preserves the state sequence but not the oracle sequence.
We will also call two algorithms conjugates if they are oracle-equivalent to a conjugate pair.
Our goal in this section is to describe how to identify conjugate algorithms.

For simplicity in the remainder of this section, we suppose that all oracles are
(sub)gradients. To detect equivalence of algorithms involving prox using methods presented here,
we may write
the state-space realization of the algorithm in terms of (sub)gradients:
\[
u = \prox_f(y) \quad \iff \quad y \in u + \partial f(u).
\]
In fact, our software uses this method to check algorithm conjugation.

Restricting to (sub)gradients, we see from the identity $(\partial f)^{-1} = \partial f^*$
that algorithm conjugation swaps the input and output of an algorithm:
the algorithm after conjugation
takes the output of the original algorithm as input and
produces the input of the original one as output.
As shown in \cref{fig10}, the input sequence of the algorithm after conjugation
is the original output sequence and the output sequence in the algorithm after conjugation
is the original input sequence.

\begin{figure}[tbhp]
	\centering
	\begin{tikzpicture}[>=latex]
		
		%topline
		\node[scale = 0.75][box6] at (0,1.9) (algo1) {$L$};
		\node[scale = 0.75] at (0,1.7) (ref2) {};
		\node[scale = 0.75] at (0,2.1) (ref1) {};
		\node[scale = 0.75][box3, left of = ref1 , node distance = 8em] (state0) {$x^{k-1}$};
		\node[scale = 0.75][left of = ref1, node distance = 13em] (init1) {\ldots};
		\node[scale = 0.75][box3, right of = ref1, node distance = 8em] (state1) {$x^{k}$};
		\node[scale = 0.75][box] at (0, 1.1) (oracle1) {$\phi$};
		\node[scale = 0.75][box3, right of = oracle1, node distance = 6em] (output1) {$y^{k-1}$};
		\node[scale = 0.75, left of= oracle1, node distance = 6em][box3] (input1) {$u^{k-1}$};
		\node[scale = 0.75][box6, right of = algo1, node distance = 16em] (algo2) {$L$};
		
		\draw[->]  (init1) -- (state0);
		\draw[->]  (state0) -- (ref1-|algo1.west);
		\draw[->]  (state1-|algo1.east) -- (state1);
		\draw[->]   (input1) |- (ref2-|algo1.west);
		\draw[->]  (output1) -- (oracle1);
		\draw[->]  (oracle1) -- (input1);
		\draw[->]  (ref2-|algo1.east) -| (output1);
		\draw[->]  (state1) -- (state1-|algo2.west);
		
		\node[scale = 0.75][box3, right of = state1, node distance = 16em] (state2) {$x^{k+1}$};
		\node[scale = 0.75][box, right of = oracle1, node distance = 16em] (oracle2) {$\phi$};
		\node[scale = 0.75][box3, right of = oracle2, node distance = 6em] (output2) {$y^{k}$};
		\node[scale = 0.75, left of= oracle2, node distance = 6em][box3] (input2) {$u^{k}$};
		\node[scale = 0.75][box6, right of = algo2, node distance = 16em] (algo3) {$L$};
		
		\draw[->]  (state1-|algo2.east) -- (state2);
		\draw[->]  (input2) |- (ref2-|algo2.west);
		\draw[->]  (output2) -- (oracle2);
		\draw[->]  (oracle2) -- (input2);
		\draw[->]  (ref2-|algo2.east) -| (output2);
		\draw[->]  (state2) -- (state1-|algo3.west);
		
		\node[scale = 0.75][right of = state2 , node distance = 16em] (end1) {\ldots};
		\node[scale = 0.75][box, right of = oracle2, node distance = 16em] (oracle3) {$\phi$};
		\node[scale = 0.75][box3, right of = oracle3, node distance = 6em] (output3) {$y^{k+1}$};
		\node[scale = 0.75, left of= oracle3, node distance = 6em][box3] (input3) {$u^{k+1}$};
		
		\draw[->]  (state1-|algo3.east) -- (end1);
		\draw[->]  (input3) |- (ref2-|algo3.west);
		\draw[->]  (output3) -- (oracle3);
		\draw[->]  (oracle3) -- (input3);
		\draw[->]  (ref2-|algo3.east) -| (output3);
		
		%botline
		\node[scale = 0.75][box6] at (0,-0.8) (algo10) {$\tilde{L}$};
		\node[scale = 0.75] at (0,-1) (ref10) {};
		\node[scale = 0.75] at (0,-0.6) (ref20) {};
		\node[scale = 0.75][box3, left of = ref10 , node distance = 8em] (state00) {$\tilde{x}^{k-1}$};
		\node[scale = 0.75][left of = ref10, node distance = 13em] (init10) {\ldots};
		\node[scale = 0.75][box3, right of = ref10, node distance = 8em] (state10) {$\tilde{x}^{k}$};
		\node[scale = 0.75][box] at (0, 0) (oracle10) {$\phi^{-1}$};
		\node[scale = 0.75][box3, right of = oracle10, node distance = 6em] (output10) {$\tilde{y}^{k-1}$};
		\node[scale = 0.75, left of= oracle10, node distance = 6em][box3] (input10) {$\tilde{u}^{k-1}$};
		\node[scale = 0.75][box6, right of = algo10, node distance = 16em] (algo20) {$\tilde{L}$};
		
		\draw[->]  (init10) -- (state00);
		\draw[->]  (state00) -- (ref10-|algo10.west);
		\draw[->]  (state10-|algo10.east) -- (state10);
		\draw[->]   (input10) |- (ref20-|algo10.west);
		\draw[->]  (output10) -- (oracle10);
		\draw[->]  (oracle10) -- (input10);
		\draw[->]  (ref20-|algo10.east) -| (output10);
		\draw[->]  (state10) -- (state10-|algo20.west);
		
		\node[scale = 0.75][box3, right of = state10 , node distance = 16em] (state20) {$\tilde{x}^{k+1}$};
		\node[scale = 0.75][box, right of = oracle10, node distance = 16em] (oracle20) {$\phi^{-1}$};
		\node[scale = 0.75][box3, right of = oracle20, node distance = 6em] (output20) {$\tilde{y}^{k}$};
		\node[scale = 0.75, left of= oracle20, node distance = 6em][box3] (input20) {$\tilde{u}^{k}$};
		\node[scale = 0.75][box6, right of = algo20, node distance = 16em] (algo30) {$\tilde{L}$};
		
		\draw[->]  (state10-|algo20.east) -- (state20);
		\draw[->]  (input20) |- (ref20-|algo20.west);
		\draw[->]  (output20) -- (oracle20);
		\draw[->]  (oracle20) -- (input20);
		\draw[->]  (ref20-|algo20.east) -| (output20);
		\draw[->]  (state20) -- (state10-|algo30.west);
		
		\node[scale = 0.75][right of = state20, node distance = 16em] (end10) {\ldots};
		\node[scale = 0.75][box, right of = oracle20, node distance = 16em] (oracle30) {$\phi^{-1}$};
		\node[scale = 0.75][box3, right of = oracle30, node distance = 6em] (output30) {$\tilde{y}^{k+1}$};
		\node[scale = 0.75, left of= oracle30, node distance = 6em][box3] (input30) {$\tilde{u}^{k+1}$};
		
		\draw[->]  (state10-|algo30.east) -- (end10);
		\draw[->]  (input30) |- (ref20-|algo30.west);
		\draw[->]  (output30) -- (oracle30);
		\draw[->]  (oracle30) -- (input30);
		\draw[->]  (ref20-|algo30.east) -| (output30);
		
		% connections
		
		\draw[-{Straight Barb[left]}] ($(input1.south) + (0.02,0)$) -- ($(output10.north) + (0.02,0.02)$);
		\draw[-{Straight Barb[left]}] ($(output10.north) + (-0.02,0)$) -- ($(input1.south) + (-0.02,-0.02)$);
		
		\draw[-{Straight Barb[left]}] ($(input2.south) + (0.02,0)$) -- ($(output20.north) + (0.02,0.02)$);
		\draw[-{Straight Barb[left]}] ($(output20.north) + (-0.02,0)$) -- ($(input2.south) + (-0.02,-0.02)$);
		
		\draw[-{Straight Barb[left]}] ($(input3.south) + (0.02,0)$) -- ($(output30.north) + (0.02,0.02)$);
		\draw[-{Straight Barb[left]}] ($(output30.north) + (-0.02,0)$) -- ($(input3.south) + (-0.02,-0.02)$);
		
		\draw[-{Straight Barb[right]}] ($(output1.south) + (-0.02,0)$) -- ($(input10.north) + (-0.02,0.02)$);
		\draw[-{Straight Barb[right]}] ($(input10.north) + (0.02,0)$) -- ($(output1.south) + (0.02,-0.02)$);
		
		\draw[-{Straight Barb[right]}] ($(output2.south) + (-0.02,0)$) -- ($(input20.north) + (-0.02,0.02)$);
		\draw[-{Straight Barb[right]}] ($(input20.north) + (0.02,0)$) -- ($(output2.south) + (0.02,-0.02)$);
		
		\draw[-{Straight Barb[right]}] ($(output3.south) + (-0.02,0)$) -- ($(input30.north) + (-0.02,0.02)$);
		\draw[-{Straight Barb[right]}] ($(input30.north) + (0.02,0)$) -- ($(output3.south) + (0.02,-0.02)$);
	\end{tikzpicture}
	\caption{Unrolled block-diagram representation of algorithm conjugation.}
	\label{fig10}
\end{figure}
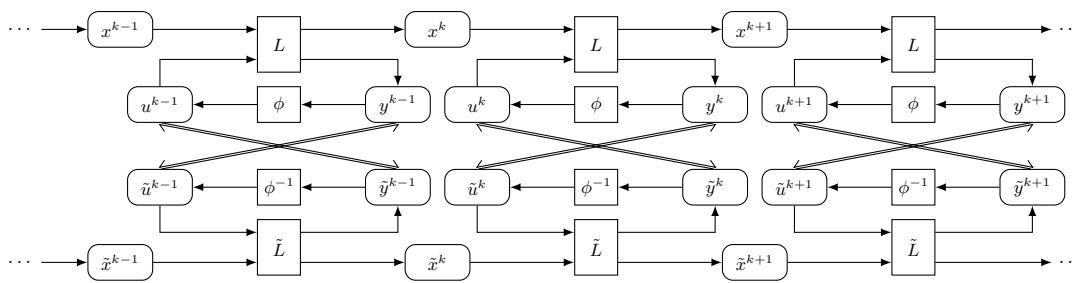

First, let's introduce a bit of standard notation.
Suppose an algorithm $\mathcal{A}$ contains $n$ oracle calls in each iteration.
% At time step $k$, the inputs and outputs
% are $y^k_1, \ldots, y^k_n$ and $u^k_1, \ldots, u^k_n$ respectively.
The cardinality of a subset $\kappa \subseteq [n]$ is
$ \left | \kappa \right |$ and the complement is $\bar{\kappa}=[n] \setminus \kappa$.
% The elements of $\kappa$ is assumed to be in ascending order. - doesn't matter
For any matrix $M\in \R^{n\times n}$,
$M[\kappa, \nu]$ is the sub-matrix of $M$ whose rows and columns
are indexed by $\kappa$ and $\nu \subseteq [n]$, respectively.
We write $M[\kappa, \kappa]$ as $M[\kappa]$ for simplicity.
For $i \in [n]$, the conjugation operator $\C_{i}$ conjugates oracle $i$:
it replaces the $i$th oracle by its inverse.
% swaps the input and output for this oracle and invert the corresponding input-output map.
The operator $\C_{\kappa}$ conjugates all oracles in the set $\kappa \subseteq [n]$
to produce the conjugate algorithm $\C_{\kappa}\mathcal{A}$.

\begin{proposition}\label{prop8}
	Suppose $\mathcal{A}$ has state-space realization $(A, B, C, D)$ and transfer function $\hat H(z)$,
	and $D[\kappa]$ is invertible.
	Then $\mathcal{B}$ is equivalent to $\C_{\kappa}\mathcal{A}$ if and
	only if the transfer function $\hat H'(z)$ of $\mathcal{B}$ satisfies
	\begin{equation}\label{eq28}
		P{\hat H'(z)}P^{T}=
		\begin{bmatrix}
			\hat H[\kappa]^{-1}(z) & -\hat H[\kappa]^{-1}(z)\hat H[\kappa, \bar{\kappa}](z)\\
			\hat H[\bar{\kappa}, \kappa](z)\hat H[\kappa]^{-1}(z) &
			\hat H[\bar{\kappa}](z)-\hat H[\bar{\kappa}, \kappa](z)\hat H[\kappa]^{-1}(z)\hat H[\kappa, \bar{\kappa}](z)
		\end{bmatrix}.
	\end{equation}
	Here $P$ is a permutation matrix that swaps rows and columns so indices in $\kappa$ come first:
	\begin{equation}\label{eq29}
		P\hat{H}(z)P^{T}=
		\begin{bmatrix}
			\hat H[\kappa](z) & \hat H[\kappa, \bar{\kappa}](z)\\
			\hat H[\bar{\kappa}, \kappa](z) & \hat H[\bar{\kappa}](z)
		\end{bmatrix}.
	\end{equation}
\end{proposition}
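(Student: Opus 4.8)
The plan is to recognize that conjugating the oracles indexed by $\kappa$ is, at the level of the underlying linear system, exactly the partial-inverse operation already analyzed in \cref{eqp13}, and then simply quote that formula. First I would fix the bookkeeping. Recall from \cref{figp1} that an algorithm $\mathcal A$ is a causal linear system $\bH$ (mapping $\bu$ to $\by$) in feedback with the oracles $\bu=\bPhi\by$. Let $P$ be the permutation of oracle indices in \cref{eq29} that brings the indices of $\kappa$ to the front, and work throughout with $P\hat H(z)P^T$, so that its leading principal block is $\hat H[\kappa](z)$; partition $\bu=(\bu_1,\bu_2)$ and $\by=(\by_1,\by_2)$ conformally (subscript $1$ = the $\kappa$-part). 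The identity $(\partial f)^{-1}=\partial f^\star$ (and, for prox oracles, the reformulation $u=\prox_f(y)\iff y\in u+\partial f(u)$ reducing to (sub)gradients) means that replacing oracle $i\in\kappa$ by its conjugate replaces the relation $u_i=\phi_i(y_i)$ by $y_i=\phi_i^{-1}(u_i)$; that is, it swaps the roles of input and output on the $\kappa$-indices, exactly as depicted in \cref{fig10}. Consequently, for $\mathcal B=\C_\kappa\mathcal A$ to be an implementable linear algorithm of the form \cref{eqp2}, its linear part must be the system $\bH'$ that takes $(\by_1,\bu_2)$ as input and produces $(\bu_1,\by_2)$ as output --- precisely the partial inverse of $\bH$ with respect to the first block. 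Since $D[\kappa]$ is invertible by hypothesis (this is the $D_{11}$-invertibility condition required in the discussion around \cref{eqp12,eqp13}), this partial inverse exists.

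For sufficiency I would then invoke \cref{eqp13} verbatim under the correspondence $\hat H_{11}\leftrightarrow\hat H[\kappa]$, $\hat H_{12}\leftrightarrow\hat H[\kappa,\bar\kappa]$, $\hat H_{21}\leftrightarrow\hat H[\bar\kappa,\kappa]$, $\hat H_{22}\leftrightarrow\hat H[\bar\kappa]$: the transfer function of the partially inverted system is
\[
\left[\begin{array}{c c}
\hat H[\kappa]^{-1}(z) & -\hat H[\kappa]^{-1}(z)\hat H[\kappa,\bar\kappa](z)\\
\hat H[\bar\kappa,\kappa](z)\hat H[\kappa]^{-1}(z) & \hat H[\bar\kappa](z)-\hat H[\bar\kappa,\kappa](z)\hat H[\kappa]^{-1}(z)\hat H[\kappa,\bar\kappa](z)
\end{array}\right],
\]
which is exactly $P\hat H'(z)P^T$ as claimed in \cref{eq28}. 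If an explicit realization is wanted rather than an appeal to \cref{eqp13}, I would start from $L=\bmat{A&B\\ C&D}$, reorder rows and columns by $P$ to expose $D[\kappa]$ in the top-left oracle block, and apply the closed-form state-space inverse from the ``inverse of state-space realization'' paragraph to that block; the matrix-inversion-lemma manipulation is the routine part.

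For necessity I would argue as in the proofs of \cref{prop1} and \cref{prop4}: the transfer function uniquely determines the input--output behavior of a linear algorithm (up to oracle equivalence), and the conjugated oracle relations force the linear part of $\mathcal B$ to be the partial inverse of that of $\mathcal A$; hence $\C_\kappa\mathcal A$ must have the transfer function in \cref{eq28}, and any algorithm with that transfer function is oracle-equivalent to $\C_\kappa\mathcal A$ and so counts as a conjugate of $\mathcal A$. The step I expect to require the most care is the very first one --- rigorously justifying that ``rewriting the algorithm with conjugate oracles'' yields a bona fide causal algorithm whose linear part is the partial inverse, rather than a merely formal manipulation. This is exactly where invertibility of $D[\kappa]$ is essential: it guarantees the partial inverse is proper (its value at $z=\infty$ is the finite matrix $D[\kappa]^{-1}$), so that a well-defined $D$ matrix and state-space realization exist; and one must also check that the causal ordering of the remaining (non-conjugated) updates is preserved after the swap, so that the resulting $D$ block can be made lower triangular as in the discussion following \cref{eq18}.
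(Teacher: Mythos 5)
Your proposal is correct and follows essentially the same route as the paper: sufficiency by reducing (after the permutation $P$, or WLOG taking $\kappa$ first) to the partial-inversion formula \cref{eqp13} under the substitutions $\hat H_{11}\leftrightarrow\hat H[\kappa]$, etc., and necessity by appeal to \cref{prop1}. The extra care you devote to justifying that conjugation of the $\kappa$-oracles really is the partial inverse of the linear part (and that invertibility of $D[\kappa]$ makes this a proper, causal realization) is material the paper relegates to the surrounding discussion rather than the proof itself, but it does not change the argument.
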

\begin{proof}
	Sufficiency. Without loss of generality, suppose the  oracles $\kappa = \{1,\ldots,|\kappa|\}$ appear first,
	\begin{displaymath}
		\hat H(z) = \begin{bmatrix}
			\hat H[\kappa](z) & \hat H[\kappa, \bar{\kappa}](z)\\
			\hat H[\bar{\kappa}, \kappa](z) & \hat H[\bar{\kappa}](z)
		\end{bmatrix}, \qquad D = \begin{bmatrix}
			D[\kappa] & D[\kappa, \bar{\kappa}]\\
			D[\bar{\kappa}, \kappa] & D[\bar{\kappa}] \end{bmatrix},
	\end{displaymath}
	and consequently the permutation matrix $P$ is the identity.
	We obtain the desired results from \cref{eqp13}
	by setting $D_{11} = D[\kappa]$, $\hat H_{11}(z) = \hat H[\kappa](z)$,
	$\hat H_{12}(z) = \hat H[\kappa, \bar{\kappa}](z)$, $\hat H_{21}(z) = \hat H[\bar{\kappa}, \kappa](z)$,
	and $\hat H_{22}(z) = \hat H[\bar{\kappa}](z)$.
	
	Necessity is provided by \cref{prop1} as the transfer function uniquely
	characterizes an equivalence class of algorithms.
	
	% Here proposition 6.1 is applied to $\mathcal B$ and any other potential algorithms
	% that have the same transfer function as $\mathcal B$.
	% We know that any algorithm that has the same transfer function as $\mathcal B$
	% is oracle-equivalent to $\mathcal B$ and conjugate to $\mathcal A$ with respect to oracle set $\kappa$.
\end{proof}

From \cref{prop8}, the transfer function $\hat H(z)$ of algorithm $\mathcal{A}$
is partially inverted when the algorithm is conjugated by $\C_{\kappa}$.
The new transfer function $\hat H'(z)$ results from applying the Sweep operator with indices $\kappa$ to $\hat H(z)$
\cite{10.2307/2683825, TSATSOMEROS2000151}.
If we consider the input and output sequences for each oracle separately,
for any oracle in $\kappa$, the input sequence corresponding to $\C_{\kappa}\mathcal{A}$ is
the original output sequence in $\mathcal{A}$
and the output sequence corresponding to $\C_{\kappa}\mathcal{A}$
is the original input sequence in $\mathcal{A}$.
The input and output sequences of oracles in $[n] \setminus \kappa$
remain unchanged in the new algorithm $\C_{\kappa}\mathcal{A}$.
Here, \cref{assump:compare} ensures the algorithms
compared call either same oracles or their corresponding conjugate oracles and
in each iteration the number of oracle calls are the same.

\Cref{prop8} assumes that $D[\kappa]$ is invertible.
In fact, 
$\C_{\kappa}\mathcal A$ is a causal algorithm
if and only if $D[\kappa]$ is invertible.
We need not condition on causality in the proposition,
since any algorithm that can be written down as a set of update equations
is necessarily causal.

Now we consider two special cases: conjugating
1) a single oracle, or
2) all of the oracles.
% Moreover, two common cases of conjugation are stated in \cref{coro1}
% as direct results of Proposition \ref{prop8},
% where conjugation is performed on a single oracle and on all the oracles involved, respectively.

\begin{corollary}\label{coro1}
	Consider algorithm $\mathcal{A}$ with state-space realization $(A, B, C, D)$ and transfer
	function $\hat H(z) \in \R^{n\times n}$.
	\begin{enumerate}[(a)]
		\item Suppose $D_{kk} \neq 0$ for any $k \in [n]$. Then
		the new transfer function $\hat H'(z)$ of $\C_{k}\mathcal{A}$ can be expressed entrywise as
		\begin{equation}\label{eq33}
			h'_{ij}(z) =\begin{cases}
				1/h_{kk}(z) & i=k,~j=k
				\\
				-h_{kj}(z)/h_{kk}(z)& i=k,~j\neq k
				\\
				h_{ik}(z)/h_{kk}(z)& i\neq k,~j = k
				\\
				h_{ij}(z)-h_{ik}(z)h_{kj}(z)/h_{kk}(z)& i\neq k,~j\neq k,
			\end{cases}
		\end{equation}
		% \begin{equation}\label{eq33}
		% 	h'_{ij}(z) =\left\{\begin{matrix*}[l]
		% 		1/h_{kk}(z) & i=k,j=k
		% 		\\
		% 		-h_{kj}(z)/h_{kk}(z)& i=k,j\neq k
		% 		\\
		% 		h_{ik}(z)/h_{kk}(z)& i\neq k,j = k
		% 		\\
		% 		h_{ij}(z)-h_{ik}(z)t_{kj}(z)/h_{kk}(z)& i\neq k,j\neq k
		% 	\end{matrix*}\right.,
		% \end{equation}
		as $h_{ij}(z)$ and $h'_{ij}(z)$ $1\leq i,j \leq n$ denote the entries of $\hat H(z)$ and $\hat H'(z)$ respectively.
		
		\item	Suppose $D$ is invertible. Then the transfer
		function $\hat H'(z)$ of $\C_{[n]}\mathcal{A}$ satisfies $\hat H'(z) = \hat H^{-1}(z)$.
		
	\end{enumerate}
	
\end{corollary}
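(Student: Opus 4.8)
The plan is to obtain both parts as immediate specializations of \cref{prop8}, choosing $\kappa$ to be a singleton in part (a) and the full index set $[n]$ in part (b); the only work is bookkeeping with the permutation matrix $P$ and checking that the invertibility hypotheses line up.

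For part (a), fix $k$ with $D_{kk}\neq 0$ and set $\kappa = \{k\}$, so $\bar{\kappa} = [n]\setminus\{k\}$. Then $D[\kappa] = D_{kk}$ is a scalar, and the hypothesis $D_{kk}\neq 0$ is exactly the requirement in \cref{prop8} that $D[\kappa]$ be invertible. Under the partition induced by $\kappa$ the blocks of $\hat H(z)$ degenerate: $\hat H[\kappa](z) = h_{kk}(z)$ is a scalar, $\hat H[\kappa,\bar{\kappa}](z)$ is the row of entries $h_{kj}(z)$ with $j\neq k$, $\hat H[\bar{\kappa},\kappa](z)$ is the column of entries $h_{ik}(z)$ with $i\neq k$, and $\hat H[\bar{\kappa}](z)$ is the submatrix of the remaining $h_{ij}(z)$. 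Substituting these into \cref{eq28} and reading the four blocks of $P\hat H'(z)P^{T}$ entrywise yields exactly the four cases of \cref{eq33}; since $P$ merely relabels indices and the formulas in \cref{eq33} are phrased in terms of the original row/column indices $i,j$ and the distinguished index $k$, the entrywise description is unchanged when we multiply back by $P^{T}$ on the left and $P$ on the right.

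For part (b), take $\kappa = [n]$, so $\bar{\kappa} = \emptyset$ and $P$ is the identity. Now $D[\kappa] = D$ is invertible by hypothesis, $\hat H[\kappa](z) = \hat H(z)$, and the blocks $\hat H[\kappa,\bar{\kappa}]$, $\hat H[\bar{\kappa},\kappa]$, $\hat H[\bar{\kappa}]$ in \cref{eq28} are all vacuous. The right-hand side of \cref{eq28} collapses to the single block $\hat H[\kappa]^{-1}(z) = \hat H(z)^{-1}$, so $\hat H'(z) = \hat H(z)^{-1}$, which is the claim. This also matches the inverse-of-a-realization formula recalled in \cref{preliminary}, applied to the whole system rather than a sub-block.

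There is no substantial obstacle: the mathematical content is carried entirely by \cref{prop8}, and the corollary is just its two extreme cases. The one point worth a word of justification is that the rational function $1/h_{kk}(z)$ appearing in part (a) is well defined, i.e.\ that $h_{kk}(z)\not\equiv 0$; this holds because $\hat H(z)\to D$ as $z\to\infty$ forces $h_{kk}(z)\to D_{kk}\neq 0$, and, more to the point, invertibility of $D[\kappa]$ is precisely the hypothesis under which the partial inverse in \cref{eqp13} (hence \cref{prop8}) is guaranteed to exist.
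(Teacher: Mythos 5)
Your proposal is correct and is exactly the route the paper intends: the corollary is stated without a separate proof precisely because it is the specialization of \cref{prop8} to $\kappa=\{k\}$ (part (a)) and $\kappa=[n]$ (part (b)), and your entrywise reading of \cref{eq28} reproduces \cref{eq33} with the right signs. Your added remark that $h_{kk}(z)\to D_{kk}\neq 0$ as $z\to\infty$, so $1/h_{kk}(z)$ is a well-defined rational function, is a small but welcome point the paper leaves implicit.
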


% a function oracle $i \in \kappa$ in $\mathcal{A}$,
% we can equivalently evaluate its conjugate function oracle in $C_{\kappa}\mathcal{A}$ (i.e. Fenchel conjugate)
% However, it is not a direct substitution
% for function oracles in $\kappa$ with their corresponding conjugate function oracles,
% necessary adjustments are required in order to satisfy the requirements for input and output sequences.
%
% From Fenchel-Young's inequality, for a proper closed convex function $f$,
% $s\in \partial f(x) \Leftrightarrow x\in \partial f^{\star}(s)$ \cites[\S3]{boyd_vandenberghe_2004}{rockafellar1966}.
% Moreover, if $f$ is differentiable, $s = \nabla f(x) \Leftrightarrow  x = \nabla f^{\star}(s)$.
% In other words, $s = \nabla f(x) \Leftrightarrow x = \nabla f^{-1}(s)$.
% Thus, if we perform conjugation $C_{\kappa}$ and
% the involved function oracles in $\kappa$ are gradients or subgradients of convex functions,
% the input-output maps corresponding to function oracles in $\kappa$ are inverted,
% and the resulting algorithm is exactly the one that uses the conjugate function oracles corresponding to oracles in $\kappa$.

\paragraph{Proximal gradient}
Now we can revisit \cref{algo11,algo12} and
show that they are conjugate.
The transfer functions of \cref{algo11,algo12} are computed
as $\hat H_{14}(z)$ and $\hat H_{15}(z)$ below.
Note that the state-space realizations are written in terms of (sub)gradients.
From \cref{coro1}, they are conjugate
with respect to the second oracle.

\begin{displaymath}
	\hat H_{14}(z) = \left[\begin{array}{ c c }
		-\frac{t}{z-1} & -\frac{t}{z-1}  \\
		-\frac{tz}{z-1} & -\frac{tz}{z-1}  \\
	\end{array}\right], \qquad
	\hat H_{15}(z) = \left[\begin{array}{ c c }
		0 & \frac{1}{z}  \\
		-1 & -\frac{z-1}{tz}  \\
	\end{array}\right]
\end{displaymath}

\begin{algorithm}[H]
	\caption{Chambolle-Pock method}
	\centering
	\label{algo14}
	\begin{algorithmic}
		\FOR{$k=0, 1, 2,\ldots$}
		\STATE{$x^{k+1}_1 = \textnormal{prox}_{\tau f}(x^k_1 - \tau M^T x^k_2)$}
		\STATE{$x^{k+1}_2 = \textnormal{prox}_{\sigma g^*}(x^k_2 + \sigma M (2x^{k+1}_1 - x^k_1))$}
		\ENDFOR
	\end{algorithmic}
\end{algorithm}

\paragraph{DR and Chambolle-Pock}
Another important example is the relation between DR (\cref{algo7})
and the primal-dual optimization method proposed
by Chambolle and Pock (\cref{algo14}~\cites{chambolle2011first}{o2018equivalence}).
Note that \cref{algo7} has parameter $t$ and linear operator $L$,
and \cref{algo14} has parameters $\tau$ and $\sigma$ and linear operator $M$.
Let $M = L$ so that \cref{algo7,algo14} solve the same problem.
Further suppose that $M$ is invertible and $MM^T = \delta I$ for any $\delta > 0$.
By \cref{coro1}, we know that they are conjugate with respect to the second oracle
if $\tau = t$ and $\sigma = 1/(\delta t)$.
So DR and the Chambolle-Pock method
(when the parameter value $\tau = t$ and $\sigma = 1/(\delta t)$) are conjugate.
The transfer functions of \cref{algo7,algo14} are provided below as $\hat H_{10}(z)$ and $\hat H_{16}(z)$ respectively.
We will say more about how to discover the correct parameter restriction in \cref{package}.

\begin{displaymath}
	\hat H_{10}(z) = \begin{bmatrix}
		-\frac{tz}{z-1}I & -\frac{t}{z-1}L^T\\
		\frac{t(1-2z)}{z-1}L & -\frac{tz}{z-1}LL^T
	\end{bmatrix}, \quad
	\hat H_{16}(z) \xrightarrow[\sigma = \frac{1}{\delta t}, \tau = t]{M=L, LL^T = \delta I}
	\begin{bmatrix}
		\frac{t(1-z)}{z}I & \frac{1}{z}L^T(LL^T)^{-1}\\
		\frac{1-2z}{z}(LL^T)^{-1}L & \frac{1-z}{tz}(LL^T)^{-1}
	\end{bmatrix}
\end{displaymath}

In order to test equivalence, all algorithms must use the same set of oracles.
This requirement becomes tricky when algorithms are written in terms of
an argmin: what is the oracle?
To resolve this issue, we compute the state-space realization of every algorithm
in this section using the subgradient as the oracle.
All these subgradient oracles are associated with proximal operators,
and so they are unique-valued, even though subgradients are generally set-valued:
the input-output pairs match those returned by the proximal operator.
(These subgradient oracles are used for the analysis but need not be computed explicitly.)
% Our method would detect the same equivalent pairs by using proximal operators as the oracle,
% but the equations would be somewhat more complicated.

% If choosing compatibile oracles is impossible,
% the algorithm cannot have a conjugate, as shown by the following argument.
% Consider an algorithm $\mathcal A$ with state-space realization $(A, B, C, D)$.
% Recall \cref{eqp2}: the input and output $u$ and $y$ should
% satisfy $y = Cx + Du$ and $u = \phi(y)$.
% By Proposition 9.1, we know that if $\mathcal A$ has a conjugate algorithm,
% $D$ must be invertible. Therefore, for any $y$, there is a unique $u$ so that
% the pair $(y,u)$ satisfies \cref{eqp2}.
% This ensures compatibility for oracles.
% This analysis also applies for the case of multiple oracles, where we
% require the invertibility of a subblock of $D$.

The fixed points
of an algorithm and its conjugate are related as stated in \cref{prop14}.
\begin{proposition}\label{prop14}
	If an algorithm $\mathcal{A}$ converges to a fixed point
	$(y[\kappa]^\star, y[\bar{\kappa}]^\star, u[\kappa]^\star, u[\bar{\kappa}]^\star, x^\star)$,
	then its conjugate $\C_{\kappa}\mathcal A$ converges to fixed point
	$(u[\kappa]^\star, y[\bar{\kappa}]^\star, y[\kappa]^\star, u[\bar{\kappa}]^\star, x^\star)$.
\end{proposition}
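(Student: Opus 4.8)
The plan is to exhibit the ``swapped'' tuple $(u[\kappa]^\star, y[\bar{\kappa}]^\star, y[\kappa]^\star, u[\bar{\kappa}]^\star, x^\star)$ as a fixed point of $\mathcal{B} = \C_\kappa\mathcal{A}$, and then to upgrade ``is a fixed point'' to ``converges to'' using the fact that conjugation leaves the state trajectory unchanged. As in the rest of \cref{conjugation} I would work with (sub)gradient oracles, so that for $i\in\kappa$ the oracle relation $u_i^\star = \phi_i(y_i^\star)$ reads $u_i^\star\in\partial f_i(y_i^\star)$, and conjugation replaces $\phi_i$ by $\phi_i^{-1}=\partial f_i^\star$; recall from \cref{prop8} that at the level of the linear system, $\C_\kappa$ partially inverts $\hat H$ (equivalently $L_\mathcal{A}$) with respect to the block indexed by $\kappa$, while the $\bar\kappa$-oracles and their input/output signals are untouched.

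First I would dispatch the oracle relations. A fixed point of $\mathcal{A}$ satisfies $u[\kappa]^\star=\phi[\kappa](y[\kappa]^\star)$ and $u[\bar\kappa]^\star=\phi[\bar\kappa](y[\bar\kappa]^\star)$. By the identity $(\partial f)^{-1}=\partial f^\star$, the first relation is equivalent to $y[\kappa]^\star=\phi[\kappa]^{-1}(u[\kappa]^\star)$, which is precisely the oracle relation demanded of $\mathcal{B}$ once the inputs and outputs of the $\kappa$-oracles are interchanged; the $\bar\kappa$-relations carry over verbatim. Hence the oracle half of the fixed-point conditions for $\mathcal{B}$ holds for the swapped tuple for free.

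Next I would handle the linear half. Partitioning the fixed-point equations of $\mathcal{A}$ (in the style of \eqref{eq36}) conformally with $\kappa/\bar\kappa$ and solving the $\kappa$-output equation for $u[\kappa]^\star$ using invertibility of $D[\kappa]$, I would substitute $\tilde x^\star=x^\star$, $\tilde u[\kappa]^\star=y[\kappa]^\star$, $\tilde y[\kappa]^\star=u[\kappa]^\star$, together with the unchanged $\bar\kappa$-signals, into the three state-space equations of the partial-inverse realization of $\mathcal{B}$ (obtained by applying the inverse-realization formula of \cref{control}, proved in \cref{apd1}, to the $\kappa$ block of $L_\mathcal{A}$) and check that each one collapses, after cancelling $D[\kappa]^{-1}$, to one of the original fixed-point equations of $\mathcal{A}$. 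This is the computational core, but it is short and parallels the bookkeeping already carried out for cyclic permutations around \eqref{eq39}. Conceptually the same fact reads: the linear part of $\mathcal{B}$ is by construction the partial-inverse system mapping $(\by[\kappa],\bu[\bar\kappa])\mapsto(\bu[\kappa],\by[\bar\kappa])$ along the \emph{same} internal state sequence, so a run of $\mathcal{A}$ sitting at a fixed point (constant $\bu^\star,\by^\star$, state constant at $x^\star$) becomes a run of $\mathcal{B}$ with constant signals given by the swap and state still constant at $x^\star$. Combining with the previous paragraph, the swapped tuple is a fixed point of $\mathcal{B}$.

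Finally, to get convergence I would invoke that conjugation preserves the state sequence: the run of $\mathcal{B}$ matching a given run of $\mathcal{A}$ has $\tilde x^k=x^k$ for every $k$, with input/output sequences equal to the $\kappa$-swapped versions of those of $\mathcal{A}$; hence if $(x^k,u^k,y^k)\to(x^\star,u^\star,y^\star)$ then the iterates of $\mathcal{B}$ converge to the claimed limit. The main obstacle I anticipate is bookkeeping rather than mathematics: keeping the $\kappa/\bar\kappa$ partition, the permutation matrix $P$ of \cref{prop8}, and the (unchanged) state dimension aligned throughout, so that the limiting state is genuinely $x^\star$ and not an object of a different size; the algebraic verification that the partial-inverse realization fixes the fixed point is routine and mirrors the full-inverse computation in \cref{apd1}.
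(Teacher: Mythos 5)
Your proposal is correct and follows essentially the same route as the paper's proof in \cref{apd5}: partition the realization conformally with $\kappa/\bar\kappa$, solve the $\kappa$-output equation for $u[\kappa]^\star$ using invertibility of $D[\kappa]$, and verify that the swapped tuple satisfies the fixed-point equations of the partial-inverse realization together with the inverted oracle relation $y[\kappa]^\star=\phi[\kappa]^{-1}(u[\kappa]^\star)$. Your explicit remark that one must upgrade ``is a fixed point'' to ``converges to'' via preservation of the state sequence is a point the paper's proof leaves implicit, but it does not change the argument.
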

For simplicity, detailed proof is provided in \cref{proof-conjugation}.
Intuitively, as we invert the input-output map of $u[\kappa]$ and $y[\kappa]$,
the corresponding parts in the fixed point are also inverted.

\begin{proposition}\label{prop10}
	Suppose algorithm $\mathcal{A}$ has state-space realization $(A, B, C, D)$,
	where $D_{ii} \neq 0$ and $D_{jj} \neq 0$.
	Then $\C_i\C_j\mathcal{A} = \C_j\C_i\mathcal{A} = \C_{\{ij\}}\mathcal{A}$.
\end{proposition}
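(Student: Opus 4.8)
The plan is to argue that $\C_i$ and $\C_j$ are, at bottom, the operation of partially inverting the linear system's input--output map on the disjoint channels associated to oracles $i$ and $j$, and that such partial inversions commute and compose. Semantically, conjugating oracle $i$ replaces the constitutive relation $u^k_i = \phi_i(y^k_i)$ by $y^k_i = \phi_i^{-1}(u^k_i)$ and re-derives a causal state-space realization; conjugating oracle $j$ does the same to the pair $(y^k_j,u^k_j)$. Since $i\neq j$, these two operations act on disjoint pairs of sequences, so as operations on the relation that defines the algorithm they manifestly commute, and carrying out both --- in either order, or simultaneously as $\C_{\{ij\}}$ --- produces the same relation and hence the same algorithm. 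To make this rigorous within our framework I would instead verify the claim at the level of transfer functions: by \cref{prop1}, two algorithms with the same number of oracle calls per iteration are oracle-equivalent if and only if their transfer functions agree, so it suffices to show $\hat H_{\C_i\C_j\mathcal A}(z)=\hat H_{\C_j\C_i\mathcal A}(z)=\hat H_{\C_{\{ij\}}\mathcal A}(z)$.

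First I would settle well-definedness. By \cref{prop8}, $\C_j\mathcal A$ requires $D_{jj}$ invertible (given), and the resulting realization has $(i,i)$-block equal to the Schur complement $D_{ii}-D_{ij}D_{jj}^{-1}D_{ji}$; this is invertible exactly when $D[\{i,j\}]$ is invertible. Symmetrically, $\C_j(\C_i\mathcal A)$ is well-defined exactly when $D[\{i,j\}]$ is invertible, and so is $\C_{\{ij\}}\mathcal A$. Thus all three algorithms are defined under a single hypothesis --- which is the natural reading of the statement, the conditions $D_{ii}\neq 0$, $D_{jj}\neq 0$ being what one checks to make the individual conjugations meaningful before computing the Schur complement.

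Then I would prove the algebraic heart. Writing $\hat H(z)$ in the $3\times 3$ block form indexed by $\{i\}$, $\{j\}$, and $\bar\kappa=[n]\setminus\{i,j\}$, apply \cref{eqp13} with the first channel equal to $\{j\}$, then apply \cref{eqp13} again to the resulting matrix with respect to $\{i\}$, expand, and collect terms; the claim is that the result coincides with \cref{eqp13} applied with the first channel $\{i,j\}$, i.e.\ the partial inverse of $\hat H$ on the two-block index set. This is exactly the composition rule $\mathrm{Sweep}_i\circ\mathrm{Sweep}_j=\mathrm{Sweep}_j\circ\mathrm{Sweep}_i=\mathrm{Sweep}_{\{i,j\}}$ for the Sweep operator, applied here to a matrix whose entries are rational functions of $z$; since the derivation uses only block matrix identities and never symmetry or positivity, the standard constant-matrix argument goes through verbatim. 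The commutation $\C_i\C_j\mathcal A=\C_j\C_i\mathcal A$ follows by relabeling $i\leftrightarrow j$, and equality with $\C_{\{ij\}}\mathcal A$ is the ``sweep two singletons $=$ sweep the pair'' identity; \cref{prop1} then upgrades equality of transfer functions to equality of the algorithms. The only real obstacle is bookkeeping: tracking which sub-blocks multiply which when \cref{eqp13} is iterated, and checking that the Schur-complement-of-a-Schur-complement collapses correctly --- the quotient property of Schur complements --- so that the nested inversions telescope into the single joint partial inverse. I would also use \cref{coro1}(a) as a sanity check on the special case $n=2$, $\bar\kappa=\emptyset$, where the identity reduces to two entrywise sweeps returning the full inverse $\hat H^{-1}(z)$, matching \cref{coro1}(b).
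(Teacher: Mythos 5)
Your overall route---reduce to transfer functions via \cref{prop1}, realize each conjugation as a partial inversion (Sweep) of $\hat H(z)$ via \cref{eqp13}, and invoke the composition/commutation identity for Sweep---is the same as the paper's, which simply cites the commutativity of the Sweep operator from the literature rather than re-deriving the Schur-complement telescoping as you propose. That part of your plan is sound, if heavier than necessary.

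There is, however, one genuine gap in your well-definedness step. You correctly identify that the second conjugation requires the Schur complement $D_{ii}-D_{ij}D_{jj}^{-1}D_{ji}$ (equivalently $D[\{i,j\}]$) to be invertible, but you then declare that this ``is the natural reading of the statement'' --- in effect you strengthen the hypothesis rather than proving the proposition as stated. For a general matrix $D$, the assumptions $D_{ii}\neq 0$ and $D_{jj}\neq 0$ do \emph{not} imply $\det D[\{i,j\}] = D_{ii}D_{jj}-D_{ij}D_{ji}\neq 0$, so as written your argument does not close. The paper supplies the missing link using causality: since $\mathcal A$ is a causal algorithm, its $D$ matrix is lower triangular (see the discussion around \cref{eq18}), so one of $D_{ij}$, $D_{ji}$ vanishes, $\det D[\{i,j\}]=D_{ii}D_{jj}\neq 0$, and all three of $\C_i\C_j\mathcal A$, $\C_j\C_i\mathcal A$, $\C_{\{ij\}}\mathcal A$ are well-defined under exactly the stated hypotheses. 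Adding that one observation repairs your proof; the rest (the iterated application of \cref{eqp13}, the quotient property of Schur complements, and the consistency check against \cref{coro1}) goes through as you describe.
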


\begin{proof}
	By \cref{coro1}, if $D_{ii} \neq 0$ and $D_{jj} \neq 0$,
	then $\C_i\mathcal A$ and $\C_j\mathcal A$ are causal.
	Note that entries above diagonal of $D$ are all zero because $\mathcal A$ is causal.
	Thus, $\det(D[\{ij\}]) = D_{ii}D_{jj} \neq 0$ and $\C_{\{ij\}}\mathcal A$ is causal.
	The commutative property of the Sweep operator gives the
	result $\C_i\C_j\mathcal A = \C_j\C_i\mathcal A = \C_{\{ij\}}\mathcal A$ \cite{10.2307/2683825,TSATSOMEROS2000151}.
\end{proof}

\Cref{prop10} states that conjugation of different oracles commutes.
This justifies our notation $\C_\kappa$ for set $\kappa$, as the order of the
oracles in $\kappa$ is irrelevant.
Further, conjugation and cyclic permutation also commute;
see \cref{prop11} and proof in \cref{proof-commutative}.

\paragraph{DR and ADMM}
We showed in \cref{app-shift} that the DR (\cref{algo7}) and ADMM (\cref{algo8})
are related by permutation with a certain choice of parameters.
Here, we show that they are related by permutation and conjugation (in either order, as they commute),
with a different choice of parameters:
$A = L^T, B = I, c = 0, \rho = t$ for ADMM.
Further suppose that linear operator $L$ is invertible.
The transfer function of this special parameterization of ADMM is shown as $\hat H_{17}(z)$.
Relations between DR and ADMM can be illustrated as follows.
Recall $\hat H_{10}(z)$ is the transfer function of DR.
Here we can observe that different choices of parameters of algorithms
can lead to different relations between algorithms.

\small
\begin{displaymath}
	\hat H_{17}(z) =  \begin{bmatrix}
		-\frac{z}{t(z-1)}I & \frac{z}{t(z-1)}L^{-1} \\
		\frac{2z-1}{tz(z-1)}L^{-T}& -\frac{z}{t(z-1)}(LL^T)^{-1}
	\end{bmatrix} \xrightarrow{\C_{12}}
	\begin{bmatrix}
		-\frac{tz}{z-1}I & -\frac{tz}{z-1}L^T\\
		\frac{t(1-2z)}{z(z-1)}L & -\frac{tz}{z-1}LL^T
	\end{bmatrix} \xrightarrow{ P_{21}}
	\begin{bmatrix}
		-\frac{tz}{z-1}I & -\frac{t}{z-1}L^T\\
		\frac{t(1-2z)}{z-1}L & -\frac{tz}{z-1}LL^T
	\end{bmatrix}= \hat H_{10}(z)
\end{displaymath}
\normalsize

The commutative property is important to identify relations between algorithms efficiently.
For example, suppose we would like to identify the relations between \cref{algo7,algo8},
with transfer functions $\hat H_{10}(z)$ and $\hat H_{17}(z)$.
We can first perform conjugation and next permutation on \cref{algo7},
and then test equivalence between the resulting algorithm and \cref{algo8}.
We need not try permutation followed by conjugation;
as these commute, both orders lead to the same transfer function.

We have already shown several relations between
DR (\cref{algo7}),
ADMM (\cref{algo8}), and the Chambolle-Pock method (\cref{algo14})
using conjugation and permutation.
We represent these relations in \cref{fig11}.
The figure relates 8 different algorithms:
Starting from DR, since it contains 2 oracles,
there are 2 possible different algorithms by permutation.
From the state-space realization, we can conjugate both oracles,
which yields 4 different algorithms by conjugation of different oracles.
Therefore, in total there are 2 $\times$ 4 = 8 possible different algorithms,
including both ADMM and Chambolle-Pock.
In the figure, $\C_1$ and $\C_2$ denote conjugation with respect to the first and second oracles respectively,
$P$ denotes permutation,
and we can move between algorithms by applying the transformation on each edge,
in either direction,
as each transformation is an involution.

\begin{figure}[tbhp]
	\centering
	\includegraphics[scale=0.55]{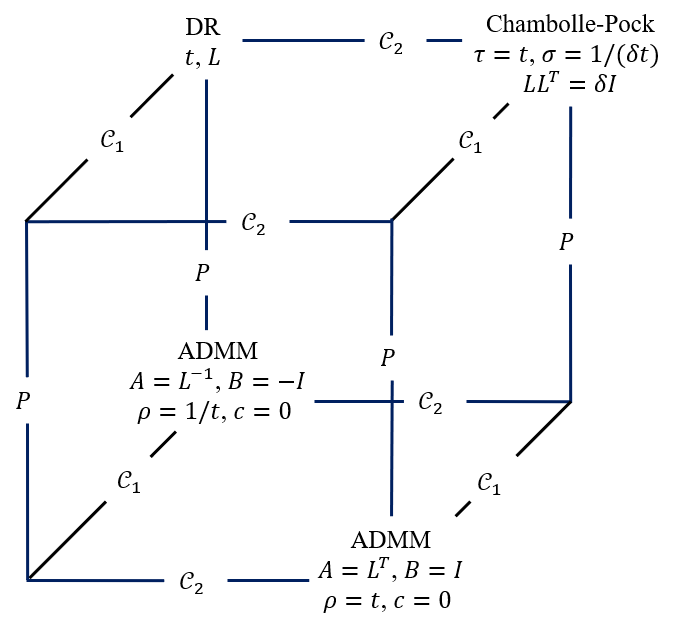}
	\caption{Connections between DR, ADMM, and Chambolle-Pock method.}
	\label{fig11}
\end{figure}

\section{Linnaeus}\label{package}
We have presented a framework for detecting equivalence between iterative
algorithms for continuous optimization.
In this section, we briefly introduce a software package called \lin{}
that implements these ideas.
The implementation and documentation are available at
\url{https://github.com/udellgroup/Linnaeus_software}.
More detailed information can be found in \cref{package_detailed}.

The input is an algorithm described in user-friendly syntax
with variables, parameters, functions, oracles, and update equations. 
The system will automatically translate the input algorithm into a canonical form
(the transfer function)
and use the canonical form to identify whether the algorithm is equivalent
to any reference algorithm,
possibly after transformations such as permutation, conjugation, or repetition.
All expressions in \lin{} are defined symbolically,
using the python package for symbolic mathematics \emph{sympy}.

Given two input algorithms,
\lin{} computes the transfer functions
and can compare them to detect equivalence and other relations.
Some algorithms are equivalent or related only when the parameters satisfy a certain
condition: for example, DR and ADMM.
If the transfer functions of each algorithm use different parameters,
\lin{} form symbolic equations and solve the equations
to determine conditions that, if satisfied by the algorithm parameters,
yield the desired relation between the algorithms;
see \cref{eq3} in \cref{charac-oracle}.

This package can be used by researchers (or peer reviewers) who wish
to understand the novelty of new algorithmic ideas and connections
to existing algorithms.
Further, the software can also serve as a search engine,
which will identify connections from the input algorithm
to existing algorithms in the literature
that appear in \lin{}'s algorithm library.

\section{Conclusion and future work}
In this paper, we have presented a framework for reasoning about equivalence
between a broad class of iterative algorithms
by using ideas from control theory to represent optimization algorithms.
The main insight is that by representing an algorithm as a
linear dynamical system in feedback with a static nonlinearity,
we can recognize equivalent algorithms by detecting
algebraic relations between the transfer functions of the associated linear systems.
This framework can identify algorithms
that result in the same sequence of oracle calls,
or algorithms that are the same up to shifts of the update equations,
repetition of the updates with the same unit block,
and conjugation of the function oracles.
These ideas are implemented in the software package \lin{},
which allows researchers to search for algorithms that are related
to a given input and identify parameter settings that make the algorithms equivalent.
Our goal is to allow researchers add new algorithms to \lin{} as they
are developed, so that \lin{} can remain a valuable resource for
algorithm designers seeking to understand connections (if any) to previous methods.

Our framework requires that the algorithm is linear in the state and oracle outputs,
but not necessarily in the parameters.
This constraint still allows us to handle a surprisingly large class of algorithms.
There are several interesting directions for future work.

Can we detect equivalence between stochastic or randomized algorithms?
Our framework applies to such algorithms with almost no modifations,
simply by allowing random oracles.
For example, we can accept oracles like
random search $\argmin \{ f(x + \omega_i): i=1,\ldots,k \}$,
stochastic gradient $\nabla f(x) + \omega$,
or noisy gradient $\nabla f(x + \omega)$.
The definition of oracle equivalence would need a slight modification:
for algorithms that use (pseudo-)randomized oracles,
two algorithms are oracle-equivalent if they
generate identical sequences of oracle calls given the same random seed.

Can we detect equivalence between parallel or distributed algorithms?
Surprisingly, our framework still works for parallel or distributed algorithms.
Notice that in a parallel algorithm, many oracle calls may be independently executed
on different processors at about the same time.
The precise ordering of these calls is not determined by the algorithm,
and so different runs of the algorithm can generate different oracle sequences.
However, all the possible oracle sequences generated by the same algorithm
share the same dependence graph.
Using the formalism defined in \cref{odg},
we can see that our framework can identify equivalence between parallel or
distributed algorithms using the expanded definition of oracle equivalence:
two algorithms are oracle-equivalent if there exists a
way of writing each algorithm as a sequence of updates so that
they generate identical sequences of oracle calls.

Can we detect equivalence between adaptive or nonlinear algorithms?
Transfer functions are only defined for linear time-invariant (LTI) systems, so the LTI assumption in our framework is critical. Nevertheless, many of the other concepts from \cref{control} do extend to systems that are \emph{almost} LTI.
For example, an algorithm with parameters that change on a fixed schedule but is otherwise linear, such as gradient descent with a diminishing stepsize, can be regarded as a linear time-varying (LTV) system~\cite{antsaklis2006linear}, and the notion of a transfer function has been generalized to LTV systems~\cite{LTV_TF}.
If, instead, the parameters change adaptively based on the other state variables,
the system can be regarded as a linear parameter varying (LPV) system~\cite{LPV_book} or a switched system~\cite{sun2006switched}. Examples of such algorithms include nonlinear conjugate gradient methods and quasi-Newton methods.

For these more complicated cases, it is still reasonable to ask whether two algorithms invoke the same sequence of oracle calls.
Discovering representations for nonlinear or time-varying algorithms that suffice to check equivalence is an interesting direction for future research.

\bibliographystyle{siamplain}
\bibliography{references}

\newpage
\appendix

\section{Linnaeus}\label{package_detailed}
In this section, we introduce our software package called \lin{}
that implements these ideas in detail.
This package can be used by researchers (or peer reviewers) who wish
to understand the novelty of new algorithmic ideas and connections
to existing algorithms.
The input is an algorithm described in user-friendly syntax
with variables, parameters, functions, oracles, and update equations.
The system will automatically translate the input algorithm into a canonical form
(the transfer function)
and use the canonical form to identify whether the algorithm is equivalent
to any reference algorithm,
possibly after transformations such as permutation, conjugation, or repetition.
Further, the software can also serve as a search engine,
which will identify connections from the input algorithm
to existing algorithms in the literature
that appear in \lin{}'s algorithm library.

\subsection{Illustrative examples}
We use \lin{} to identify the relations
between algorithms presented previously in the paper.
These examples demonstrate the power and simplicity of \lin{}.
Code for these examples can be found at \url{https://github.com/udellgroup/Linnaeus_software}.

\titleparagraph{\Cref{algo1,algo2}}
The following code identifies that
\cref{algo1,algo2} are oracle-equivalent.
We input \cref{algo1,algo2} with variables, oracles, and update equations,
and parse them into state-space realizations.
Then we check oracle equivalence using the function \texttt{is\_equivalent}.
The system returns \texttt{True},
consistent with our analytical results in \cref{example,charac-oracle}.

% code can be found at https://github.com/udellgroup/Linnaeus/blob/main/examples/sec9_linnaeus.ipynb, or
% https://github.com/QCGroup/linnaeus/blob/master/examples/sec9_linnaeus.ipynb
\begin{changemargin}{1cm}{1cm}
\begin{tcolorbox}[breakable, size=fbox, boxrule=1pt, pad at break*=1mm,colback=cellbackground, colframe=cellborder]
\begin{Verbatim}[commandchars=\\\{\}]
\PY{c+c1}{\PYZsh{} define Algorithm 3.1}
\PY{n}{algo1} \PY{o}{=} \PY{n}{Algorithm}\PY{p}{(}\PY{l+s+s2}{\PYZdq{}}\PY{l+s+s2}{Algorithm 3.1}\PY{l+s+s2}{\PYZdq{}}\PY{p}{)}

\PY{c+c1}{\PYZsh{} add oracle gradient of f to Algorithm 3.1}
\PY{n}{gradf} \PY{o}{=} \PY{n}{algo1}\PY{o}{.}\PY{n}{add\PYZus{}oracle}\PY{p}{(}\PY{l+s+s2}{\PYZdq{}}\PY{l+s+s2}{gradf}\PY{l+s+s2}{\PYZdq{}}\PY{p}{)}

\PY{c+c1}{\PYZsh{} add variables x1, x2, and x3 to Algorithm 3.1}
\PY{n}{x1}\PY{p}{,} \PY{n}{x2}\PY{p}{,} \PY{n}{x3} \PY{o}{=} \PY{n}{algo1}\PY{o}{.}\PY{n}{add\PYZus{}var}\PY{p}{(}\PY{l+s+s2}{\PYZdq{}}\PY{l+s+s2}{x1}\PY{l+s+s2}{\PYZdq{}}\PY{p}{,} \PY{l+s+s2}{\PYZdq{}}\PY{l+s+s2}{x2}\PY{l+s+s2}{\PYZdq{}}\PY{p}{,} \PY{l+s+s2}{\PYZdq{}}\PY{l+s+s2}{x3}\PY{l+s+s2}{\PYZdq{}}\PY{p}{)}

\PY{c+c1}{\PYZsh{} add update equations}
\PY{c+c1}{\PYZsh{} x3 \PYZlt{}\PYZhy{} 2x1 \PYZhy{} x2 }
\PY{n}{algo1}\PY{o}{.}\PY{n}{add\PYZus{}update}\PY{p}{(}\PY{n}{x3}\PY{p}{,} \PY{l+m+mi}{2}\PY{o}{*}\PY{n}{x1} \PY{o}{\PYZhy{}} \PY{n}{x2}\PY{p}{)}
\PY{c+c1}{\PYZsh{} x2 \PYZlt{}\PYZhy{} x1}
\PY{n}{algo1}\PY{o}{.}\PY{n}{add\PYZus{}update}\PY{p}{(}\PY{n}{x2}\PY{p}{,} \PY{n}{x1}\PY{p}{)}
\PY{c+c1}{\PYZsh{} x1 \PYZlt{}\PYZhy{} x3 \PYZhy{} 1/10*gradf(x3)}
\PY{n}{algo1}\PY{o}{.}\PY{n}{add\PYZus{}update}\PY{p}{(}\PY{n}{x1}\PY{p}{,} \PY{n}{x3} \PY{o}{\PYZhy{}} \PY{l+m+mi}{1}\PY{o}{/}\PY{l+m+mi}{10}\PY{o}{*}\PY{n}{gradf}\PY{p}{(}\PY{n}{x3}\PY{p}{)}\PY{p}{)}

\PY{c+c1}{\PYZsh{} parse Algorithm 3.1, translate it into canonical form}
\PY{n}{algo1}\PY{o}{.}\PY{n}{parse}\PY{p}{(}\PY{p}{)}
\end{Verbatim}
\end{tcolorbox}
\end{changemargin}

\begin{Verbatim}[commandchars=\\\{\}]
	--------------------------------------------------------------
	Parse Algorithm 3.1:
\end{Verbatim}

$\quad \qquad
\begin{aligned} 
	x_3 & \gets 2x_1 - x2 \\
	x_2 & \gets x_1 \\
	x_1 & \gets x_3 - 0.1\text{gradf}(x_3)
\end{aligned}$

\begin{Verbatim}[commandchars=\\\{\}]
	--------------------------------------------------------------
\end{Verbatim}

\begin{changemargin}{1cm}{1cm}
\begin{tcolorbox}[breakable, size=fbox, boxrule=1pt, pad at break*=1mm,colback=cellbackground, colframe=cellborder]
\begin{Verbatim}[commandchars=\\\{\}]
\PY{n}{algo2} \PY{o}{=} \PY{n}{Algorithm}\PY{p}{(}\PY{l+s+s2}{\PYZdq{}}\PY{l+s+s2}{Algorithm 3.2}\PY{l+s+s2}{\PYZdq{}}\PY{p}{)}
\PY{n}{xi1}\PY{p}{,} \PY{n}{xi2}\PY{p}{,} \PY{n}{xi3} \PY{o}{=} \PY{n}{algo2}\PY{o}{.}\PY{n}{add\PYZus{}var}\PY{p}{(}\PY{l+s+s2}{\PYZdq{}}\PY{l+s+s2}{xi1}\PY{l+s+s2}{\PYZdq{}}\PY{p}{,} \PY{l+s+s2}{\PYZdq{}}\PY{l+s+s2}{xi2}\PY{l+s+s2}{\PYZdq{}}\PY{p}{,} \PY{l+s+s2}{\PYZdq{}}\PY{l+s+s2}{xi3}\PY{l+s+s2}{\PYZdq{}}\PY{p}{)}
\PY{n}{gradf} \PY{o}{=} \PY{n}{algo2}\PY{o}{.}\PY{n}{add\PYZus{}oracle}\PY{p}{(}\PY{l+s+s2}{\PYZdq{}}\PY{l+s+s2}{gradf}\PY{l+s+s2}{\PYZdq{}}\PY{p}{)}

\PY{c+c1}{\PYZsh{} xi3 \PYZlt{}\PYZhy{} xi1}
\PY{n}{algo2}\PY{o}{.}\PY{n}{add\PYZus{}update}\PY{p}{(}\PY{n}{xi3}\PY{p}{,} \PY{n}{xi1}\PY{p}{)}
\PY{c+c1}{\PYZsh{} xi1 \PYZlt{}\PYZhy{} xi1 \PYZhy{} xi2 \PYZhy{} 1/5*gradf(xi1)}
\PY{n}{algo2}\PY{o}{.}\PY{n}{add\PYZus{}update}\PY{p}{(}\PY{n}{xi1}\PY{p}{,} \PY{n}{xi1} \PY{o}{\PYZhy{}} \PY{n}{xi2} \PY{o}{\PYZhy{}} \PY{l+m+mi}{1}\PY{o}{/}\PY{l+m+mi}{5}\PY{o}{*}\PY{n}{gradf}\PY{p}{(}\PY{n}{xi3}\PY{p}{)}\PY{p}{)}
\PY{c+c1}{\PYZsh{} xi2 \PYZlt{}\PYZhy{} xi2 + 1/10*gradf(xi3)}
\PY{n}{algo2}\PY{o}{.}\PY{n}{add\PYZus{}update}\PY{p}{(}\PY{n}{xi2}\PY{p}{,} \PY{n}{xi2} \PY{o}{+} \PY{l+m+mi}{1}\PY{o}{/}\PY{l+m+mi}{10}\PY{o}{*}\PY{n}{gradf}\PY{p}{(}\PY{n}{xi3}\PY{p}{)}\PY{p}{)}

\PY{n}{algo2}\PY{o}{.}\PY{n}{parse}\PY{p}{(}\PY{p}{)}
\end{Verbatim}
\end{tcolorbox}
\end{changemargin}

\begin{Verbatim}[commandchars=\\\{\}]
	--------------------------------------------------------------
	Parse Algorithm 3.2:
\end{Verbatim}

$\quad \qquad
\begin{aligned} 
	\xi_3 & \gets \xi_1 \\
	\xi_1 & \gets \xi_1 - \xi_2 - 0.2\text{gradf}(\xi_3) \\
	\xi_2 & \gets \xi_2 + 0.1\text{gradf}(\xi_3)
\end{aligned}$

\begin{Verbatim}[commandchars=\\\{\}]
	--------------------------------------------------------------
\end{Verbatim}

\begin{changemargin}{1cm}{1cm}
\begin{tcolorbox}[breakable, size=fbox, boxrule=1pt, pad at break*=1mm,colback=cellbackground, colframe=cellborder]
\begin{Verbatim}[commandchars=\\\{\}]
\PY{c+c1}{\PYZsh{} check oracle equivalence}
\PY{n}{lin}\PY{o}{.}\PY{n}{is\PYZus{}equivalent}\PY{p}{(}\PY{n}{algo1}\PY{p}{,} \PY{n}{algo2}\PY{p}{,} \PY{n}{verbose} \PY{o}{=} \PY{k+kc}{True}\PY{p}{)}
\end{Verbatim}
\end{tcolorbox}
\end{changemargin}

\begin{Verbatim}[commandchars=\\\{\}]
	--------------------------------------------------------------
	Algorithm 3.1 is equivalent to Algorithm 3.2.
	--------------------------------------------------------------
	True
\end{Verbatim}

\titleparagraph{\Cref{algo5,algo6}}
The second example identifies that
\cref{algo5,algo6} are shift-equivalent.
We input and parse the algorithms into state-space realizations
and then check shift equivalence (cyclic permutation)
using the function \texttt{is\_permutation}.
The system returns \texttt{True},
consistent with results in \cref{example,charac-shift}.

\begin{changemargin}{1cm}{1cm}
\begin{tcolorbox}[breakable, size=fbox, boxrule=1pt, pad at break*=1mm,colback=cellbackground, colframe=cellborder]
\begin{Verbatim}[commandchars=\\\{\}]
\PY{n}{algo5} \PY{o}{=} \PY{n}{Algorithm}\PY{p}{(}\PY{l+s+s2}{\PYZdq{}}\PY{l+s+s2}{Algorithm 3.5}\PY{l+s+s2}{\PYZdq{}}\PY{p}{)}
\PY{n}{x1}\PY{p}{,} \PY{n}{x2}\PY{p}{,} \PY{n}{x3} \PY{o}{=} \PY{n}{algo5}\PY{o}{.}\PY{n}{add\PYZus{}var}\PY{p}{(}\PY{l+s+s2}{\PYZdq{}}\PY{l+s+s2}{x1}\PY{l+s+s2}{\PYZdq{}}\PY{p}{,} \PY{l+s+s2}{\PYZdq{}}\PY{l+s+s2}{x2}\PY{l+s+s2}{\PYZdq{}}\PY{p}{,} \PY{l+s+s2}{\PYZdq{}}\PY{l+s+s2}{x3}\PY{l+s+s2}{\PYZdq{}}\PY{p}{)}
\PY{n}{proxf}\PY{p}{,} \PY{n}{proxg} \PY{o}{=} \PY{n}{algo5}\PY{o}{.}\PY{n}{add\PYZus{}oracle}\PY{p}{(}\PY{l+s+s2}{\PYZdq{}}\PY{l+s+s2}{proxf}\PY{l+s+s2}{\PYZdq{}}\PY{p}{,} \PY{l+s+s2}{\PYZdq{}}\PY{l+s+s2}{proxg}\PY{l+s+s2}{\PYZdq{}}\PY{p}{)}

\PY{c+c1}{\PYZsh{} x1 \PYZlt{}\PYZhy{} proxf(x3)}
\PY{n}{algo5}\PY{o}{.}\PY{n}{add\PYZus{}update}\PY{p}{(}\PY{n}{x1}\PY{p}{,} \PY{n}{proxf}\PY{p}{(}\PY{n}{x3}\PY{p}{)}\PY{p}{)}
\PY{c+c1}{\PYZsh{} x2 \PYZlt{}\PYZhy{} proxg(2x1 \PYZhy{} x3)}
\PY{n}{algo5}\PY{o}{.}\PY{n}{add\PYZus{}update}\PY{p}{(}\PY{n}{x2}\PY{p}{,} \PY{n}{proxg}\PY{p}{(}\PY{l+m+mi}{2}\PY{o}{*}\PY{n}{x1} \PY{o}{\PYZhy{}} \PY{n}{x3}\PY{p}{)}\PY{p}{)}
\PY{c+c1}{\PYZsh{} x3 \PYZlt{}\PYZhy{} x3 + x2 \PYZhy{} x1}
\PY{n}{algo5}\PY{o}{.}\PY{n}{add\PYZus{}update}\PY{p}{(}\PY{n}{x3}\PY{p}{,} \PY{n}{x3} \PY{o}{+} \PY{n}{x2} \PY{o}{\PYZhy{}} \PY{n}{x1}\PY{p}{)}

\PY{n}{algo5}\PY{o}{.}\PY{n}{parse}\PY{p}{(}\PY{p}{)}
\end{Verbatim}
\end{tcolorbox}
\end{changemargin}

\begin{Verbatim}[commandchars=\\\{\}]
	--------------------------------------------------------------
	Parse Algorithm 3.5:
\end{Verbatim}

$\quad \qquad
\begin{aligned} 
	x_1 & \gets \text{proxf}(x_3) \\
	x_2 & \gets \text{proxg}(2x_1 - x_3) \\
	x_3 & \gets x_3 + x_2 - x_1
\end{aligned}$

\begin{Verbatim}[commandchars=\\\{\}]
	--------------------------------------------------------------
\end{Verbatim}

\begin{changemargin}{1cm}{1cm}
\begin{tcolorbox}[breakable, size=fbox, boxrule=1pt, pad at break*=1mm,colback=cellbackground, colframe=cellborder]
\begin{Verbatim}[commandchars=\\\{\}]
\PY{n}{algo4} \PY{o}{=} \PY{n}{Algorithm}\PY{p}{(}\PY{l+s+s2}{\PYZdq{}}\PY{l+s+s2}{Algorithm 3.6}\PY{l+s+s2}{\PYZdq{}}\PY{p}{)}
\PY{n}{xi1}\PY{p}{,} \PY{n}{xi2} \PY{o}{=} \PY{n}{algo4}\PY{o}{.}\PY{n}{add\PYZus{}var}\PY{p}{(}\PY{l+s+s2}{\PYZdq{}}\PY{l+s+s2}{xi1}\PY{l+s+s2}{\PYZdq{}}\PY{p}{,} \PY{l+s+s2}{\PYZdq{}}\PY{l+s+s2}{xi2}\PY{l+s+s2}{\PYZdq{}}\PY{p}{)}
\PY{n}{proxf}\PY{p}{,} \PY{n}{proxg} \PY{o}{=} \PY{n}{algo4}\PY{o}{.}\PY{n}{add\PYZus{}oracle}\PY{p}{(}\PY{l+s+s2}{\PYZdq{}}\PY{l+s+s2}{proxf}\PY{l+s+s2}{\PYZdq{}}\PY{p}{,} \PY{l+s+s2}{\PYZdq{}}\PY{l+s+s2}{proxg}\PY{l+s+s2}{\PYZdq{}}\PY{p}{)}

\PY{c+c1}{\PYZsh{} xi1 \PYZlt{}\PYZhy{} proxg(\PYZhy{}xi1 + 2xi2) + xi1 \PYZhy{} xi2}
\PY{n}{algo4}\PY{o}{.}\PY{n}{add\PYZus{}update}\PY{p}{(}\PY{n}{xi1}\PY{p}{,} \PY{n}{proxg}\PY{p}{(}\PY{o}{\PYZhy{}}\PY{n}{xi1} \PY{o}{+} \PY{l+m+mi}{2}\PY{o}{*}\PY{n}{xi2}\PY{p}{)} \PY{o}{+} \PY{n}{xi1} \PY{o}{\PYZhy{}} \PY{n}{xi2}\PY{p}{)}
\PY{c+c1}{\PYZsh{} xi2 \PYZlt{}\PYZhy{} proxf(xi1)}
\PY{n}{algo4}\PY{o}{.}\PY{n}{add\PYZus{}update}\PY{p}{(}\PY{n}{xi2}\PY{p}{,} \PY{n}{proxf}\PY{p}{(}\PY{n}{xi1}\PY{p}{)}\PY{p}{)}

\PY{n}{algo4}\PY{o}{.}\PY{n}{parse}\PY{p}{(}\PY{p}{)}
\end{Verbatim}
\end{tcolorbox}
\end{changemargin}

\begin{Verbatim}[commandchars=\\\{\}]
	--------------------------------------------------------------
	Parse Algorithm 3.6:
\end{Verbatim}

$\quad \qquad
\begin{aligned} 
	\xi_1 & \gets \text{proxg}(-\xi_1 + 2\xi_2) + \xi_1 - \xi_2 \\
	\xi_2 & \gets \text{proxf}(\xi_1) 
\end{aligned}$

\begin{Verbatim}[commandchars=\\\{\}]
	--------------------------------------------------------------
\end{Verbatim}

\begin{changemargin}{1cm}{1cm}
\begin{tcolorbox}[breakable, size=fbox, boxrule=1pt, pad at break*=1mm,colback=cellbackground, colframe=cellborder]
\begin{Verbatim}[commandchars=\\\{\}]
\PY{c+c1}{\PYZsh{} check cyclic permutation (shift equivalence)}
\PY{n}{lin}\PY{o}{.}\PY{n}{is\PYZus{}permutation}\PY{p}{(}\PY{n}{algo5}\PY{p}{,} \PY{n}{algo6}\PY{p}{,} \PY{n}{verbose} \PY{o}{=} \PY{k+kc}{True}\PY{p}{)}
\end{Verbatim}
\end{tcolorbox}
\end{changemargin}

\begin{Verbatim}[commandchars=\\\{\}]
	--------------------------------------------------------------
	Algorithm 3.5 is a permutation of Algorithm 3.6.
	--------------------------------------------------------------
	True
\end{Verbatim}

\titleparagraph{DR and ADMM}
The third illustrative example shows that DR and ADMM are related
by permutation and conjugation, as we saw in \cref{conjugation}.
Further, \lin{} can even reveal the specific parameter choice
required for the relation to hold.
Just as in \cref{conjugation}, suppose both DR and ADMM solve problem \cref{eqp1}
with $A = L^T$, $B = I$, and $c = 0$.
We input and parse DR and ADMM.
To detect the relations,
we use function \texttt{test\_conjugate\_permutation} to check
conjugation and permutation between DR and ADMM.
The results are the same as \cref{conjugation}.

\begin{changemargin}{1cm}{1cm}
\begin{tcolorbox}[breakable, size=fbox, boxrule=1pt, pad at break*=1mm,colback=cellbackground, colframe=cellborder]
\begin{Verbatim}[commandchars=\\\{\}]
\PY{n}{DR} \PY{o}{=} \PY{n}{Algorithm}\PY{p}{(}\PY{l+s+s2}{\PYZdq{}}\PY{l+s+s2}{Douglas\PYZhy{}Rachford splitting}\PY{l+s+s2}{\PYZdq{}}\PY{p}{)}
\PY{n}{x1}\PY{p}{,} \PY{n}{x2}\PY{p}{,} \PY{n}{x3} \PY{o}{=} \PY{n}{DR}\PY{o}{.}\PY{n}{add\PYZus{}var}\PY{p}{(}\PY{l+s+s2}{\PYZdq{}}\PY{l+s+s2}{x1}\PY{l+s+s2}{\PYZdq{}}\PY{p}{,} \PY{l+s+s2}{\PYZdq{}}\PY{l+s+s2}{x2}\PY{l+s+s2}{\PYZdq{}}\PY{p}{,} \PY{l+s+s2}{\PYZdq{}}\PY{l+s+s2}{x3}\PY{l+s+s2}{\PYZdq{}}\PY{p}{)}
\PY{n}{t} \PY{o}{=} \PY{n}{DR}\PY{o}{.}\PY{n}{add\PYZus{}parameter}\PY{p}{(}\PY{l+s+s2}{\PYZdq{}}\PY{l+s+s2}{t}\PY{l+s+s2}{\PYZdq{}}\PY{p}{)}
\PY{n}{L} \PY{o}{=} \PY{n}{DR}\PY{o}{.}\PY{n}{add\PYZus{}parameter}\PY{p}{(}\PY{l+s+s2}{\PYZdq{}}\PY{l+s+s2}{L}\PY{l+s+s2}{\PYZdq{}}\PY{p}{,} \PY{n}{commutative} \PY{o}{=} \PY{k+kc}{False}\PY{p}{)}

\PY{c+c1}{\PYZsh{} x1 \PYZlt{}\PYZhy{} prox\PYZus{}tf(x3)}
\PY{n}{DR}\PY{o}{.}\PY{n}{add\PYZus{}update}\PY{p}{(}\PY{n}{x1}\PY{p}{,} \PY{n}{lin}\PY{o}{.}\PY{n}{prox}\PY{p}{(}\PY{n}{f}\PY{p}{,} \PY{n}{t}\PY{p}{)}\PY{p}{(}\PY{n}{x3}\PY{p}{)}\PY{p}{)}
\PY{c+c1}{\PYZsh{} x2 \PYZlt{}\PYZhy{} prox\PYZus{}tgL(2x1 \PYZhy{} x3)}
\PY{n}{DR}\PY{o}{.}\PY{n}{add\PYZus{}update}\PY{p}{(}\PY{n}{x2}\PY{p}{,} \PY{n}{lin}\PY{o}{.}\PY{n}{prox}\PY{p}{(}\PY{n}{g}\PY{p}{,} \PY{n}{t}\PY{p}{,} \PY{n}{L}\PY{p}{)}\PY{p}{(}\PY{l+m+mi}{2}\PY{o}{*}\PY{n}{x1} \PY{o}{\PYZhy{}} \PY{n}{x3}\PY{p}{)}\PY{p}{)}
\PY{c+c1}{\PYZsh{} x3 \PYZlt{}\PYZhy{} x3 + x2 \PYZhy{} x1}
\PY{n}{DR}\PY{o}{.}\PY{n}{add\PYZus{}update}\PY{p}{(}\PY{n}{x3}\PY{p}{,} \PY{n}{x3} \PY{o}{+} \PY{n}{x2} \PY{o}{\PYZhy{}} \PY{n}{x1}\PY{p}{)}

\PY{n}{DR}\PY{o}{.}\PY{n}{parse}\PY{p}{(}\PY{p}{)}
\end{Verbatim}
\end{tcolorbox}
\end{changemargin}

\begin{Verbatim}[commandchars=\\\{\}]
	--------------------------------------------------------------
	Parse Douglas-Rachford splitting:
\end{Verbatim}

$\quad \qquad
\begin{aligned} 
	x_1 & \gets \text{prox}_{tf}(x_3) \\
	x_2 & \gets \text{prox}_{t(g \circ L)}(2x_1 - x_3) \\
	x_3 & \gets x_3 + x_2 - x_1
\end{aligned}$

\begin{Verbatim}[commandchars=\\\{\}]
	--------------------------------------------------------------
\end{Verbatim}

\begin{changemargin}{1cm}{1cm}
\begin{tcolorbox}[breakable, size=fbox, boxrule=1pt, pad at break*=1mm,colback=cellbackground, colframe=cellborder]
\begin{Verbatim}[commandchars=\\\{\}]
\PY{n}{ADMM} \PY{o}{=} \PY{n}{Algorithm}\PY{p}{(}\PY{l+s+s2}{\PYZdq{}}\PY{l+s+s2}{ADMM}\PY{l+s+s2}{\PYZdq{}}\PY{p}{)}
\PY{n}{f}\PY{p}{,} \PY{n}{g} \PY{o}{=} \PY{n}{ADMM}\PY{o}{.}\PY{n}{add\PYZus{}function}\PY{p}{(}\PY{l+s+s2}{\PYZdq{}}\PY{l+s+s2}{f}\PY{l+s+s2}{\PYZdq{}}\PY{p}{,} \PY{l+s+s2}{\PYZdq{}}\PY{l+s+s2}{g}\PY{l+s+s2}{\PYZdq{}}\PY{p}{)}
\PY{n}{rho} \PY{o}{=} \PY{n}{ADMM}\PY{o}{.}\PY{n}{add\PYZus{}parameter}\PY{p}{(}\PY{l+s+s2}{\PYZdq{}}\PY{l+s+s2}{rho}\PY{l+s+s2}{\PYZdq{}}\PY{p}{)}
\PY{n}{L} \PY{o}{=} \PY{n}{ADMM}\PY{o}{.}\PY{n}{add\PYZus{}parameter}\PY{p}{(}\PY{l+s+s2}{\PYZdq{}}\PY{l+s+s2}{L}\PY{l+s+s2}{\PYZdq{}}\PY{p}{,} \PY{n}{commutative} \PY{o}{=} \PY{k+kc}{False}\PY{p}{)}
\PY{n}{xi1}\PY{p}{,} \PY{n}{xi2}\PY{p}{,} \PY{n}{xi3} \PY{o}{=} \PY{n}{ADMM}\PY{o}{.}\PY{n}{add\PYZus{}var}\PY{p}{(}\PY{l+s+s2}{\PYZdq{}}\PY{l+s+s2}{xi1}\PY{l+s+s2}{\PYZdq{}}\PY{p}{,} \PY{l+s+s2}{\PYZdq{}}\PY{l+s+s2}{xi2}\PY{l+s+s2}{\PYZdq{}}\PY{p}{,} \PY{l+s+s2}{\PYZdq{}}\PY{l+s+s2}{xi3}\PY{l+s+s2}{\PYZdq{}}\PY{p}{)}

\PY{c+c1}{\PYZsh{} xi1 \PYZlt{}\PYZhy{} argmin(x1, g\PYZca{}*(xi1) + 1/2*rho*||T(L)xi1 + xi2 + xi3||\PYZca{}2)}
\PY{n}{ADMM}\PY{o}{.}\PY{n}{add\PYZus{}update}\PY{p}{(}\PY{n}{xi1}\PY{p}{,} \PY{n}{lin}\PY{o}{.}\PY{n}{argmin}\PY{p}{(}\PY{n}{xi1}\PY{p}{,} \PY{n}{g}\PY{p}{(}\PY{n}{xi1}\PY{p}{)} \PY{o}{+} \PY{l+m+mi}{1}\PY{o}{/}\PY{l+m+mi}{2}\PY{o}{*}\PY{n}{rho}\PY{o}{*}\PY{n}{lin}\PY{o}{.}\PY{n}{norm\PYZus{}square}\PY{p}{(}\PY{n}{T}\PY{p}{(}\PY{n}{L}\PY{p}{)}\PY{o}{*}\PY{n}{xi1} \PY{o}{+} \PY{n}{xi2} \PY{o}{+} \PY{n}{xi3}\PY{p}{)}\PY{p}{)}\PY{p}{)}
\PY{c+c1}{\PYZsh{} xi2 \PYZlt{}\PYZhy{} argmin(x2, f\PYZca{}*(xi2) + 1/2*rho*||T(L)xi1 + xi2 + xi3||\PYZca{}2)}
\PY{n}{ADMM}\PY{o}{.}\PY{n}{add\PYZus{}update}\PY{p}{(}\PY{n}{xi2}\PY{p}{,} \PY{n}{lin}\PY{o}{.}\PY{n}{argmin}\PY{p}{(}\PY{n}{xi2}\PY{p}{,} \PY{n}{f}\PY{p}{(}\PY{n}{xi2}\PY{p}{)} \PY{o}{+} \PY{l+m+mi}{1}\PY{o}{/}\PY{l+m+mi}{2}\PY{o}{*}\PY{n}{rho}\PY{o}{*}\PY{n}{lin}\PY{o}{.}\PY{n}{norm\PYZus{}square}\PY{p}{(}\PY{n}{T}\PY{p}{(}\PY{n}{L}\PY{p}{)}\PY{o}{*}\PY{n}{xi1} \PY{o}{+} \PY{n}{xi2} \PY{o}{+} \PY{n}{xi3}\PY{p}{)}\PY{p}{)}\PY{p}{)}
\PY{c+c1}{\PYZsh{} xi3 \PYZlt{}\PYZhy{} xi3 + T(L)xi1 + xi2}
\PY{n}{ADMM}\PY{o}{.}\PY{n}{add\PYZus{}update}\PY{p}{(}\PY{n}{xi3}\PY{p}{,} \PY{n}{xi3} \PY{o}{+} \PY{n}{T}\PY{p}{(}\PY{n}{L}\PY{p}{)}\PY{o}{*}\PY{n}{xi1} \PY{o}{+} \PY{n}{xi2}\PY{p}{)}

\PY{n}{ADMM}\PY{o}{.}\PY{n}{parse}\PY{p}{(}\PY{p}{)}
\end{Verbatim}
\end{tcolorbox}
\end{changemargin}

\begin{Verbatim}[commandchars=\\\{\}]
	--------------------------------------------------------------
	Parse ADMM:
\end{Verbatim}

$\quad \qquad
\begin{aligned} 
	\xi_1 & \gets \text{argmin}_{\xi_1}\{ g(\xi_1) + 0.5 \rho \text{norm}_{\text{square}}(T(L)\xi_1 + \xi_2 + \xi_3) \} \\
	\xi_2 & \gets \text{argmin}_{\xi_2}\{ f(\xi_2) + 0.5 \rho \text{norm}_{\text{square}}(T(L)\xi_1 + \xi_2 + \xi_3) \} \\
	\xi_3 & \gets T(L)\xi_1 + \xi_2 + \xi_3
\end{aligned}$

\begin{Verbatim}[commandchars=\\\{\}]
	--------------------------------------------------------------
\end{Verbatim}

\begin{changemargin}{1cm}{1cm}
\begin{tcolorbox}[breakable, size=fbox, boxrule=1pt, pad at break*=1mm,colback=cellbackground, colframe=cellborder]
\begin{Verbatim}[commandchars=\\\{\}]
\PY{c+c1}{\PYZsh{} check permutation and conjugation }
\PY{c+c1}{\PYZsh{} between DR and ADMM}
\PY{n}{lin}\PY{o}{.}\PY{n}{test\PYZus{}conjugate\PYZus{}permutation}\PY{p}{(}\PY{n}{DR}\PY{p}{,} \PY{n}{ADMM}\PY{p}{)}
\end{Verbatim}
\end{tcolorbox}
\end{changemargin}

\begin{Verbatim}[commandchars=\\\{\}]
	--------------------------------------------------------------
	==============================================================
	Parameters of Douglas-Rachford splitting:
\end{Verbatim}

$\quad \qquad t$, $L$

\begin{Verbatim}[commandchars=\\\{\}]
	Parameters of ADMM:
\end{Verbatim}

$\quad \qquad \rho$, $L$

\begin{Verbatim}[commandchars=\\\{\}]
	Douglas-Rachford splitting is a conjugate permutation of ADMM, if the parameters
	satisfy:
\end{Verbatim}

$\quad \qquad
\begin{aligned} 
	& \rho = t \\
	& L = L
\end{aligned}$

\begin{Verbatim}[commandchars=\\\{\}]
	==============================================================
	--------------------------------------------------------------
\end{Verbatim}

\subsection{Implementation}
In this subsection, we briefly describe the implementation of \lin{}.
All expressions in \lin{} are defined symbolically,
using the python package for symbolic mathematics \emph{sympy}.
In \lin{}, an algorithm is specified by defining
variables, parameters, functions, oracles, and update equations.
All variables and parameters are symbolic, so there is no need to specialize
problem dimensions or parameter choices.
The system automatically translates an input algorithm into its
state-space realization and computes the transfer function.
The transfer functions can be compared and manipulated as needed
to establish various kinds of equivalences or other relations between algorithms.

% Since the major goal of \lin{} is to analyze abstract algorithms,
% specific formations of expressions make no difference and
% all the expressions are regarded as abstract expressions,
% such as the dimension of a variable or the specific formation
% of a function.

\titleparagraph{Parameter declaration}
Parameters of the algorithm can be declared as scalar (commutative)
or vector or matrix (noncommutative).
% If users would like to declare and add a vector or matrix,
% the argument \texttt{commutative} should be set as \texttt{False} with function \texttt{add\_parameter}.
The following code shows how to add scalar \texttt{t} and matrix \texttt{L} to \texttt{algo1}.
\begin{changemargin}{1cm}{1cm}
\begin{tcolorbox}[breakable, size=fbox, boxrule=1pt, pad at break*=1mm,colback=cellbackground, colframe=cellborder]
\begin{Verbatim}[commandchars=\\\{\}]
\PY{c+c1}{\PYZsh{} add a scalar parameter t}
\PY{n}{t} \PY{o}{=} \PY{n}{algo1}\PY{o}{.}\PY{n}{add\PYZus{}parameter}\PY{p}{(}\PY{l+s+s2}{\PYZdq{}}\PY{l+s+s2}{t}\PY{l+s+s2}{\PYZdq{}}\PY{p}{)}
\PY{c+c1}{\PYZsh{} add a matrix parameter L}
\PY{n}{L} \PY{o}{=} \PY{n}{algo1}\PY{o}{.}\PY{n}{add\PYZus{}parameter}\PY{p}{(}\PY{l+s+s2}{\PYZdq{}}\PY{l+s+s2}{L}\PY{l+s+s2}{\PYZdq{}}\PY{p}{,} \PY{n}{commutative} \PY{o}{=} \PY{k+kc}{False}\PY{p}{)}
\end{Verbatim}
\end{tcolorbox}
\end{changemargin}

\titleparagraph{Parameter specification}
Given two input algorithms,
\lin{} computes the transfer functions
and can compare them to detect equivalence and other relations.
Some algorithms are equivalent or related only when the parameters satisfy a certain
condition: for example, DR and ADMM.
If the transfer functions of each algorithm use different parameters,
\lin{} form symbolic equations and solve the equations
to determine conditions that, if satisfied by the algorithm parameters,
yield the desired relation between the algorithms;
see \cref{eq3} in \cref{charac-oracle}.

\titleparagraph{Oracles and function}
Oracles play the starring role in our framework: oracle equivalence
is possible only if two algorithms share the same oracles.
In \lin{}, we provide two approaches to declare and add oracles to an algorithm.
The black-box approach is to define oracles as black boxes.
When parsing the algorithm, the system treats each oracle as a distinct entity
unrelated to any other oracle.
An oracle declared using syntax \texttt{add\_oracle} uses the black-box approach.
For example, we may
add oracles $\nabla f$ and $\textnormal{prox}_g$ to algorithm \texttt{algo1}:
\begin{changemargin}{1cm}{1cm}
\begin{tcolorbox}[breakable, size=fbox, boxrule=1pt, pad at break*=1mm,colback=cellbackground, colframe=cellborder]
\begin{Verbatim}[commandchars=\\\{\}]
\PY{c+c1}{\PYZsh{} add oracle gradient of f in the first approach}
\PY{n}{gradf} \PY{o}{=} \PY{n}{algo1}\PY{o}{.}\PY{n}{add\PYZus{}oracle}\PY{p}{(}\PY{l+s+s2}{\PYZdq{}}\PY{l+s+s2}{gradf}\PY{l+s+s2}{\PYZdq{}}\PY{p}{)}
\PY{c+c1}{\PYZsh{} add oracle prox of g in the first approach}
\PY{n}{proxg} \PY{o}{=} \PY{n}{algo1}\PY{o}{.}\PY{n}{add\PYZus{}oracle}\PY{p}{(}\PY{l+s+s2}{\PYZdq{}}\PY{l+s+s2}{proxg}\PY{l+s+s2}{\PYZdq{}}\PY{p}{)}
\end{Verbatim}
\end{tcolorbox}
\end{changemargin}

The functional approach is to define oracles in terms of the (sub)gradient of a function.
When parsing an algorithm, all the oracles will be decomposed into (sub)gradients and
the state-space realization given in terms of (sub)gradients.
We say that two algorithms are oracle-equivalent in terms of functional oracles
if they are oracle-equivalent after rewriting the algorithm
to use only (sub)gradient oracles.
This approach is critical to allow us to identify algorithm conjugation,
since conjugate algorithms use different (conjugate) oracles.
If every algorithm is represented in terms of (sub)gradients,
algorithm conjugation can be detected using \cref{prop8}.
Fortunately, common oracles such as prox and argmin
can be easily written in terms of (sub)gradients:
for example, $\prox_f(x) = (I - \partial f)^{-1}(x)$ and argmin as \cref{eq58}.

\mnote{Have we defined what it means to use an argmin as an oracle, or given any examples? Please add one.}
\snote{In \cref{charac-shift}, there is an example of ADMM using argmin oracle, is that enough, or shall we provide another example here?
	Or we provide a more explicit way to define argmin in the proof of black-box vs functional, might add a reference?}

To use the functional approach, users must define and add functions
to the algorithm first using \texttt{add\_function}
and then declare and add oracles.
The following code shows how to use the functional approach to declare and add oracles $\nabla f$ and $\textnormal{prox}_f$.
\begin{changemargin}{1cm}{1cm}
\begin{tcolorbox}[breakable, size=fbox, boxrule=1pt, pad at break*=1mm,colback=cellbackground, colframe=cellborder]
\begin{Verbatim}[commandchars=\\\{\}]
\PY{c+c1}{\PYZsh{} add function f}
\PY{n}{f} \PY{o}{=} \PY{n}{algo1}\PY{o}{.}\PY{n}{add\PYZus{}function}\PY{p}{(}\PY{l+s+s2}{\PYZdq{}}\PY{l+s+s2}{f}\PY{l+s+s2}{\PYZdq{}}\PY{p}{)}
\PY{c+c1}{\PYZsh{} gradient of f with repect to x1}
\PY{n}{lin}\PY{o}{.}\PY{n}{grad}\PY{p}{(}\PY{n}{f}\PY{p}{)}\PY{p}{(}\PY{n}{x1}\PY{p}{)}
\PY{c+c1}{\PYZsh{} prox of f with repect to x2 and parameter t}
\PY{n}{lin}\PY{o}{.}\PY{n}{prox}\PY{p}{(}\PY{n}{f}\PY{p}{,}\PY{n}{t}\PY{p}{)}\PY{p}{(}\PY{n}{x2}\PY{p}{)}
\end{Verbatim}
\end{tcolorbox}
\end{changemargin}

\subsection{Black-box vs functional oracles}
Are two algorithms equivalent with respect to black-box oracles
if and only if they are equivalent with respect to functional oracles?
Intuitively, when oracles are defined
in terms of (sub)gradients,
it might be possible to identify more relations with other algorithms.
However, as stated in \cref{prop12}, for algorithms
that use only proximal operators, argmins, and (sub)gradients as oracles,
equivalence is preserved under both black-box and functional definitions of oracles.

\begin{proposition}\label{prop12}
	Suppose two algorithms
	use only proximal operators, argmins, and (sub)gradients as oracles.
	Then the two algorithms are equivalent with respect to black-box oracles if and only if
	they are also equivalent with respect to functional oracles.
	
	% Suppose two algorithms use only three kinds of oracles:
	% proximal operators, argmins, and (sub)gradients.
	% the user inputs the update equations in terms of these oracles
	% and the system translates them into algorithm equations in terms of (sub)gradients
	% and determines they are oracle-equivalent with respect to (sub)gradients.
	% Then they are also oracle-equivalent in terms of the original oracles and vice versa.
\end{proposition}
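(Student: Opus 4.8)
The plan is to show that, for algorithms whose oracles are proxes, argmins, and (sub)gradients, passing from the black-box realization to the functional realization is an \emph{invertible} reparametrization of the oracle input--output pairs, hence induces a bijection on transfer functions; the claim then follows from \cref{prop1} applied on both sides.

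\emph{Step 1: oracle-by-oracle.} A (sub)gradient oracle is unchanged, so its pair $(y_i,u_i)$ maps to itself. For a proximal oracle, the resolvent identity $u=\prox_{tf_i}(y)\iff\tfrac1t(y-u)\in\partial f_i(u)$ shows that after rewriting, $\partial f_i$ is queried at $\hat y_i:=u_i$ and returns $\hat u_i:=\tfrac1t(y_i-u_i)$, i.e.\ $\sbmat{\hat y_i\\\hat u_i}=T_i\sbmat{y_i\\u_i}$ with the invertible matrix $T_i=\sbmat{0&I\\\frac1t I&-\frac1t I}$; argmin oracles give an invertible $T_i$ in the same way via \cref{eq58}. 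Two observations matter: each $T_i$ depends only on the oracle (its function and parameters), so algorithms built from the same oracles get the same $T_i$ --- and this holds automatically for any two oracle-equivalent algorithms, since oracle equivalence forces identical oracle sequences; and the lower-right block $F_i$ of $T_i$ is always invertible ($I$ for a (sub)gradient, $-\frac1t I$ for a prox, and similarly for an argmin by \cref{eq58}).

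\emph{Step 2: lift to the algorithm, and conclude.} Stacking the $T_i$ gives an invertible block matrix $T=\sbmat{\mathbf C&\mathbf D\\\mathbf E&\mathbf F}$ (obtained from $\diag(T_1,\dots,T_n)$ by a permutation of coordinates) relating the black-box sequences $(\by,\bu)$ to the functional ones: $\by^f=\mathbf C\by+\mathbf D\bu$, $\bu^f=\mathbf E\by+\mathbf F\bu$. Since $\by=\bH^{bb}\bu$, one gets $\by^f=\bH^{fn}\bu^f$ with $\bH^{fn}=(\mathbf C\bH^{bb}+\mathbf D)(\mathbf E\bH^{bb}+\mathbf F)^{-1}$, and correspondingly on transfer functions $\hat H^{bb}(z)\mapsto\hat H^{fn}(z)$ --- a matrix linear-fractional transformation by the invertible matrix $T$, whose inverse is the linear-fractional transformation by $T^{-1}$ (precisely the map carrying the functional realization back to the black-box one). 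So $\hat H^{bb}\mapsto\hat H^{fn}$ is a bijection, the \emph{same} one for any two algorithms built from the same oracles. Hence, by \cref{prop1}, $\hat H^{bb}_1=\hat H^{bb}_2$ iff $\hat H^{fn}_1=\hat H^{fn}_2$: the two algorithms are oracle-equivalent with respect to black-box oracles iff they are oracle-equivalent with respect to functional oracles. (Reading ``equivalent'' as shift-equivalent costs nothing extra: one composes with the per-oracle $z$-shifts of \cref{prop4}, which act only on output coordinates and commute with the block structure of $T$.)

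\emph{Main obstacle.} The one genuinely delicate point is that $\mathbf E\hat H^{bb}(z)+\mathbf F$ must be a nonsingular rational matrix for $\hat H^{fn}$ to be defined and the transformation to be invertible. I would settle this by letting $z\to\infty$: then $\hat H^{bb}(z)\to D^{bb}$, which is \emph{strictly} lower triangular over the oracle blocks (a black-box oracle cannot read its own output), while $\mathbf F$ is an invertible diagonal matrix by Step 1; therefore $\mathbf E D^{bb}+\mathbf F$ is lower triangular with invertible diagonal, so $\det(\mathbf E\hat H^{bb}(z)+\mathbf F)\not\equiv 0$ and the needed inverse exists wherever it is used. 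Everything else is the routine bookkeeping of relabeling inputs and outputs and reordering update equations so the functional realization is presented causally.
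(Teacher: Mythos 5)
Your proof is correct and rests on the same core mechanism as the paper's: rewrite each prox/argmin call as a subgradient inclusion, observe that the transfer function undergoes an invertible linear-fractional transformation, and conclude via \cref{prop1}. The organization is genuinely different, though. The paper groups all argmin oracles into one block and all (sub)gradient oracles into another, then computes the transformed state-space realization \cref{eq34} and transfer function \cref{eq35} explicitly; you instead attach a $2\times 2$ block matrix $T_i$ to each oracle and stack them, which is more modular and makes the per-oracle change of variables transparent. More importantly, the paper's proof only establishes one direction explicitly (``if $\hat H(z)$ is fixed then $\hat H'(z)$ is also fixed''), leaving the converse to the implicit invertibility of the map in \cref{eq35}; your explicit bijection argument --- the inverse linear-fractional transformation given by $T^{-1}$, with nonsingularity of $\mathbf{E}\hat H(z)+\mathbf{F}$ settled by letting $z\to\infty$ and using strict block lower-triangularity of $D$ --- closes that gap and is the more complete treatment of the stated ``if and only if.'' Your nonsingularity argument plays the same role as the paper's remark that $I+M_1D_{11}$ is invertible by causality. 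Two small caveats, both shared with the paper's own proof: the claim that both algorithms receive the \emph{same} $T_i$ requires that they call the same oracles with the same parameters (e.g., the same $t$ in $\prox_{tf}$), which is the standing assumption behind oracle equivalence; and invertibility of the lower-left block of $T_i$ for a general argmin oracle implicitly requires the cross-term matrix $Q_{12}$ (equivalently $M_1$, $M_2$ in the paper's notation) to be invertible, a nondegeneracy condition the paper also uses silently.
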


\begin{proof}
	Since for any function $g$ and any $t$,
	$\textnormal{prox}_{tg}(x) = \textnormal{argmin}_y\{tg(y) + \frac{1}{2} \|x - y\|^2\}$,
	we can treat proximal operator as a special case of argmin.
	Without loss of generality, any argmin oracle in a linear algorithm has the form
	\begin{displaymath}
		z = \textnormal{argmin}_x \left\{ \lambda g(x) + \frac{1}{2}\left[\begin{array}{c} x \\ y \end{array} \right]^T
		\left[\begin{array}{cc}	Q_{11} & Q_{12}\\ Q_{21} & Q_{22}
		\end{array} \right] \left[\begin{array}{c}	x \\ y \end{array} \right]\right\}.
	\end{displaymath}
	Here $z$ is the value of the oracle and $y$ can be regarded as the argument,
	which means from the perspective of a linear system,
	$z$ is the input and $y$ is the output.
	The parameter $\lambda$ can be a scalar or matrix, $g$ is a function,
	and $Q_{11}$, $Q_{12}$, $Q_{21}$, $Q_{22}$ are parameter matrices.
	Specifically,
	\begin{displaymath}
		\left[\begin{array}{cc}	Q_{11} & Q_{12}\\ Q_{21} & Q_{22}
		\end{array} \right]
	\end{displaymath}
	is a symmetric matrix and
	\begin{displaymath}
		\frac{1}{2}\left[\begin{array}{c} x \\ y \end{array} \right]^T
		\left[\begin{array}{cc}	Q_{11} & Q_{12}\\ Q_{21} & Q_{22}
		\end{array} \right] \left[\begin{array}{c}	x \\ y \end{array} \right]
	\end{displaymath}
	is a quadratic term with respect to $x$ and $y$.
	The matrix $Q_{11}$ must be invertible if the argmin oracle is single-valued.
	To recover the proximal operator,
	choose a scalar $\lambda$ and set
	\begin{displaymath}
		\left[\begin{array}{cc}	Q_{11} & Q_{12}\\ Q_{21} & Q_{22}
		\end{array} \right] = \left[\begin{array}{cc}	I & -I\\ -I & I
		\end{array} \right].
	\end{displaymath}
	If $g$ is a convex function, the argmin oracle can be
	written in terms of the subgradient oracle $\partial g$
	as follows,
	
	\begin{equation}\label{eq58}
		z \in -Q_{11}^{-1}\lambda \partial g (z) - Q_{11}^{-1}Q_{12}y.
	\end{equation}
	
	Suppose we have an algorithm with $n+m$ oracles in total,
	consisting of $n$ argmins and $m$ (sub)gradients.
	We can group the argmins and the (sub)gradients together respectively
	and partition the state-space realization accordingly as
	\begin{equation}\label{eq54}
		\left[\begin{array}{c:c c}
			A & B_1 & B_2\\
			\hdashline
			C_1 & D_{11}& D_{12} \\
			C_2 & D_{21} & D_{22}
		\end{array}\right],
	\end{equation}
	where $C_1$, $B_1$ correspond to the argmins, $C_2$, $B_2$ correspond to the (sub)gradients,
	and $D$ is partitioned accordingly into $D_{11}$, $D_{12}$, $D_{21}$, and $D_{22}$.
	The transfer function can be represented accordingly as
	\begin{displaymath}
		\hat H(z) = \left[\begin{array}{c c}
			\hat H_{11}(z) & \hat H_{12}(z) \\
			\hat H_{21}(z) & \hat H_{22}(z) \\
		\end{array}\right] = \left[\begin{array}{c c}
			C_1(zI - A)^{-1}B_1 + D_{11} & C_1(zI - A)^{-1}B_2 + D_{12} \\
			C_2(zI - A)^{-1}B_1 + D_{21} & C_2(zI - A)^{-1}B_2 + D_{22} \\
		\end{array}\right].
	\end{displaymath}
	The input and output are partitioned as $(\bar{u}_1, \bar{u}_2)$ and $(\bar{y}_1, \bar{y}_2)$,
	where $\bar{y}_1 = (y_1, \dots, y_n)$, $\bar{y}_2 = (y_{n+1}, \dots, y_{n+m})$,
	$\bar{u}_1 = (z_1, \dots, z_n)$, and $\bar{u}_2 = (\nabla f_{n+1}(y_{n+1}), \dots, \nabla f_{n+m}(y_{n+m}))$.
	For each $i \in \{1, \dots, n\}$ we have
	\begin{equation}\label{eq55}
		z_i = \textnormal{argmin}_x \left\{ \lambda_i f_i(x) + \frac{1}{2}\left[\begin{array}{c} x \\ y_i \end{array} \right]^T
		\left[\begin{array}{cc}	Q_{11}^i & Q_{12}^i\\ Q_{21}^i & Q_{22}^i
		\end{array} \right] \left[\begin{array}{c}	x \\ y_i \end{array} \right]\right\}
	\end{equation}
	where $Q_{11}^i$ is invertible for any $i \in \{1, \dots, n\}$.
	
	Now we rewrite the linear system so that the nonlinearities
	corresponding to the argmins for the new linear system are (sub)gradients.
	Let $\lambda = \textnormal{diag}(\lambda_{1}, \dots, \lambda_n)$,
	$Q_1 = \textnormal{diag}(Q^1_{11}, \dots, Q_{11}^n)$,
	$Q_2 = \textnormal{diag}(Q^1_{12}, \dots, Q_{12}^n)$,
	and $M_1 = Q_1^{-1}Q_2 $, and $M_2 = Q_1^{-1}\lambda$.
	The new state-space realization in terms of the (sub)gradient oracles is
	\begin{equation}\label{eq34}
		\left[\begin{array}{c:c c }
			A - B_1(I + M_1D_{11})^{-1}M_1C_1 & -B_1(I + M_1D_{11})^{-1}M_2
			&  B_2- B_1(I + M_1D_{11})^{-1}M_1D_{12} \\
			\hdashline
			-(I + M_1D_{11})^{-1}M_1C_1 & -(I + M_1D_{11})^{-1}M_2
			& -(I + M_1D_{11})^{-1}M_1D_{12} \\
			C_2-D_{21}(I + M_1D_{11})^{-1}M_1C_1 & -D_{21}(I + M_1D_{11})^{-1}M_2
			& D_{22}-D_{21}(I + M_1D_{11})^{-1}M_1D_{12}   \\
		\end{array}\right].
	\end{equation}
	We can compute the transfer function as
	\small
	\begin{equation}\label{eq35}
		\hat H'(z) = \left[\begin{array}{c c}
			\hat H'_{11}(z) & \hat H'_{12}(z) \\
			\hat H'_{21}(z) & \hat H'_{22}(z) \\
		\end{array}\right] = \left[\begin{array}{c c}
			-(I + M_1\hat H_{11}(z))^{-1}M_2 & -(I + M_1 \hat H_{11}(z))^{-1}M_1\hat H_{12}(z) \\
			-\hat H_{21}(z)(I +M_1\hat H_{11}(z))^{-1}M_2 & \hat H_{22}(z)-\hat H_{21}(z)(I +M_1\hat H_{11}(z))^{-1}M_1\hat H_{12}(z) \\
		\end{array}\right].
	\end{equation}
	\normalsize
	Note that $I + M_1D_{11}$ is invertible
	(otherwise the algorithm is not causal) and
	consequently $I +M_1\hat H_{11}(z)$ is invertible.
	The matrix $Q_1$ is also invertible, since $Q^i_{11}$ is invertible for any $i \in \{1, \dots, n\}$.
	A detailed proof of \cref{eq34} and \cref{eq35} is provided in \cref{proof-oracles}.
	Therefore, we know that if $\hat H(z)$ is fixed then $\hat H'(z)$ is also fixed.
	\mnote{Could end proof here; is it clear enough?}
	\snote{Yes, ending here should be clear. Then, if we end here, we might change the previous sentence as
		``if $\hat H(z)$ is fixed then $\hat H'(z)$ is also fixed and vice versa"?}
\end{proof}

\section{Proof of (4.14)}\label{proof-inversesystem}
Since $\bu$ and $\by$ are partitioned as $\bu = (\bu_1, \bu_2)$ and $\by = (\by_1, \by_2)$,
the state-space realization can be partitioned accordingly as
\begin{displaymath}
	\left[\begin{array}{c:c c}
		A & B_1 & B_2 \\
		\hdashline
		C_1 & D_{11} & D_{12} \\
		C_2 & D_{21} & D_{22}
	\end{array}\right].
\end{displaymath}
We can express the transfer function $\hat H(z)$ as
\begin{displaymath}
	\hat H(z) =  \left[\begin{array}{c c}
		\hat H_{11}(z) & \hat H_{12}(z) \\
		\hat H_{21}(z) & \hat H_{22}(z) \\
	\end{array}\right]=\left[\begin{array}{c c}
		C_1(zI - A)^{-1}B_1 + D_{11} & C_1(zI - A)^{-1}B_2 + D_{12} \\
		C_2(zI - A)^{-1}B_1 + D_{21} & C_2(zI - A)^{-1}B_2 + D_{22} \\
	\end{array}\right].
\end{displaymath}
The system equations show as
\begin{equation}\label{eq50}
	\begin{aligned}
		x^{k+1} & = Ax^k + B_1u^k_1 + B_2u^k_2 \\
		y^k_1 & = C_1x^k + D_{11}u^k_1 + D_{12}u^k_2 \\
		y^k_2 & = C_2x^k + D_{21}u^k_1 + D_{22}u^k_2. \\
	\end{aligned}
\end{equation}
As we invert the input-output map corresponding to $\by_1$ and $\bu_1$,
the input of this system becomes $(y^k_1, u^k_2)$ and
the output is $(u^k_1, y^k_2)$ at time $k$.
From \cref{eq50}, as $D_{11}$ is invertibe, we have
\begin{equation*}
	u^k_1 = -D_{11}^{-1}C_1x^k + D_{11}^{-1}y^k_1 - D_{11}^{-1}D_{12}u^k_2.
\end{equation*}
The new system equations change to
\begin{equation*}
	\begin{aligned}
		x^{k+1} & = (A-B_1D_{11}^{-1}C_1)x^k + B_1D_{11}^{-1}y^k_1 + (B_2-B_1D_{11}^{-1}D_{12})u^k_2 \\
		u^k_1 & = -D_{11}^{-1}C_1x^k + D_{11}^{-1}y^k_1 - D_{11}^{-1}D_{12}u^k_2 \\
		y^k_2 & = (C_2 - D_{21}D_{11}^{-1}C_1)x^k + D_{21}D_{11}^{-1}y^k_1 + (D_{22}- D_{21}D_{11}^{-1}D_{12})u^k_2,
	\end{aligned}
\end{equation*}
which correspond to state-space realization
\begin{equation}\label{eq31}
	\left[
	\begin{array}{c:c c}
		A-B_1D_{11}^{-1}C_1 & B_1D_{11}^{-1} & B_2-B_1D_{11}^{-1}D_{12} \\
		\hdashline
		-D_{11}^{-1}C_1 & D_{11}^{-1} & - D_{11}^{-1}D_{12} \\
		(C_2 - D_{21}D_{11}^{-1}C_1) & D_{21}D_{11}^{-1}  & D_{22}- D_{21}D_{11}^{-1}D_{12}
	\end{array}
	\right].
\end{equation}

To calculate the transfer function, note that
\begin{align*}
	(zI - A + B_1D_{11}^{-1}C_1)^{-1} &= (zI -A)^{-1} + (zI -A)^{-1}B_1(-D_{11} - C_1(zI-A)^{-1}B_1)^{-1}C_1(zI-A)^{-1}\\
	&= (zI -A)^{-1} - (zI -A)^{-1}B_1\hat H_{11}^{-1}(z)C_1(zI-A)^{-1}.
\end{align*}
We have
\begin{align*}
	\hat H'_{11}(z) & = -D_{11}^{-1}C_1((zI -A)^{-1} - (zI -A)^{-1}B_1\hat H_{11}^{-1}(z)C_1(zI-A)^{-1})B_1D_{11}^{-1} + D_{11}^{-1} \\
	& = -D_{11}^{-1}(\hat H_{11}(z) - D_{11} - (\hat H_{11}(z) - D_{11})\hat H_{11}^{-1}(z)(\hat H_{11}(z) - D_{11}))D_{11}^{-1} + D_{11}^{-1} \\
	& = -D_{11}^{-1}(\hat H_{11}(z) - D_{11})(I - \hat H_{11}^{-1}(z)(\hat H_{11}(z) - D_{11}))D_{11}^{-1} + D_{11}^{-1} \\
	& = -D_{11}^{-1}(\hat H_{11}(z) - D_{11})\hat H_{11}^{-1}(z)+ D_{11}^{-1} \\
	& = \hat H_{11}^{-1}(z)\\
	\hat H'_{12}(z) & = -D_{11}^{-1}C_1((zI -A)^{-1} - (zI -A)^{-1}B_1\hat H_{11}^{-1}(z)C_1(zI-A)^{-1})B_2 - \hat H_{11}^{-1}(z)D_{12} \\
	& = -D_{11}^{-1}(\hat H_{12}(z) - D_{12} - (\hat H_{11}(z) - D_{11})\hat H_{11}^{-1}(z)(\hat H_{12}(z) - D_{12})) - \hat H_{11}^{-1}(z)D_{12} \\
	& = -D_{11}^{-1}(I - (\hat H_{11}(z) - D_{11})\hat H_{11}^{-1}(z))(\hat H_{12}(z) - D_{12}) - \hat H_{11}^{-1}(z)D_{12} \\
	& = -\hat H_{11}^{-1}(z)(\hat H_{12}(z) - D_{12})- \hat H_{11}^{-1}(z)D_{12} \\
	& = -\hat H_{11}^{-1}(z)\hat H_{12}(z)\\
	\hat H'_{21}(z) & = C_2((zI -A)^{-1} - (zI -A)^{-1}B_1\hat H_{11}^{-1}(z)C_1(zI-A)^{-1})B_1D_{11}^{-1} + D_{21}\hat H_{11}^{-1}(z) \\
	& =(\hat H_{21}(z) - D_{21} - (\hat H_{21}(z) - D_{21})\hat H_{11}^{-1}(z)(\hat H_{11}(z) - D_{11}))D_{11}^{-1} + D_{21}\hat H_{11}^{-1}(z) \\
	& = (\hat H_{21}(z) - D_{21})(I - \hat H_{11}^{-1}(z)(\hat H_{11}(z) - D_{11}))D_{11}^{-1} + D_{21}\hat H_{11}^{-1}(z) \\
	& = (\hat H_{21}(z) - D_{21})\hat H_{11}^{-1}(z)+ D_{21}\hat H_{11}^{-1}(z) \\
	& = \hat H_{21}(z)\hat H_{11}^{-1}(z)\\
	\hat H'_{22}(z) & = \hat H_{22}(z) - (\hat H_{21}(z)- D_{21})\hat H_{11}^{-1}(z)(\hat H_{12}(z) - D_{12}) - D_{21}\hat H_{11}^{-1}(z)(\hat H_{12}(z)- D_{12}) \\
	& -(\hat H_{21}(z)- D_{21})\hat H_{11}^{-1}(z)D_{12}- D_{21}\hat H_{11}^{-1}(z)D_{12} \\
	& = \hat H_{22}(z) - \hat H_{21}(z)\hat H_{11}^{-1}(z)\hat H_{12}(z).
\end{align*}
Thus, we get the desired results as \cref{eqp13}.

\section{Proof of proposition 6.1}\label{proof-oracleequivalence}

Given an algorithm $\mathcal A$ with state-space realization $(A, B, C, D)$, 
the relation between the input $u$ and output $y$ can be expressed as 
\begin{equation}\label{input-output}
	y^k= C(A)^kx^0+ \sum_{j=0}^{k-1}C(A)^{k-(j+1)}Bu^j + Du^k.
\end{equation}
Relation \cref{input-output} is obtained by \cref{eqp3}, without the assumption that $x^0 = 0$.
The output $y^k$ is the sum of $C(A)^kx^0$, which is due to the initial condition $x^0$, and
$\sum_{j=0}^{k-1}C(A)^{k-(j+1)}Bu^j + Du^k$, which is due to the inputs $\{u^0,\dots,u^k\}$.
The linearity of $\mathcal A$ ($\mathcal A$ is treated as a linear system) 
allows the decomposition of two contributions and they can be studied separately:
\[
(\text{total response}) = \underbrace{(\text{zero input response})}_{\text{set $u^k=0$ for $k \geq 0$}}
\,+\, \underbrace{(\text{zero state response})}_{\text{set $x^0=0$}}.
\]
Since we would like to characterize $\mathcal A$ with its input-output map, 
we can only focus on the zero state response,
which allows us to avoid details about initialization.
With the definition of impulse response, 
\begin{equation*}
	H^k = \begin{cases}
		D 		  & k = 0 \\
		C(A)^{k-1}B & k \geq 1
	\end{cases}, 
\end{equation*}
we can express the zero state response as 
\begin{equation*}
	y^k = H^k u^0 + H^{k-1} u^1 + \cdots + H^1 u^{k-1} + H^0u^k.
\end{equation*}

Transfer function provides a compact form to represent the $H^k$ series 
by taking the $z$-transform. 
The transfer function $H(z)$ shows as follows, 
\begin{equation*}
	\hat H(z) = \left[\begin{array}{c|c}
		A & B\\
		\hline
		C & D
	\end{array}\right] = D+\sum_{k=1}^{\infty}C(A)^{k-1}Bz^{-k} = C(zI - A)^{-1}B +D.
\end{equation*}
Therefore, we know that algorithm $\mathcal A$ is uniquely characterized by its input-output map, 
and thus uniquely characterized by its impulse response and transfer function. 

From the definition of oracle equivalence, 
oracle-equivalent algorithms have identical sequences of output $\by$ 
for each possible sequence of input $\bu$ if initialized properly. 
Here $\by = \{u^k\}^\infty_{k = 0}$ and $\bu = \{y^k\}^\infty_{k = 0}$. 
Thus, they must have identical impulse responses and consequently identical transfer functions. 
This completes the proof. 

\section{Proof of proposition 7.1}\label{proof-cyclicpermutation}

\renewcommand{\thealgorithm}{D.\arabic{algorithm}}
\setcounter{algorithm}{0}
\begin{algorithm}[H]
	\centering
	\caption{General form of algorithm $\mathcal A$}
	\label{algo15}
	\begin{algorithmic}
		\FOR{$k=0, 1, 2,\ldots$}
		\STATE{$x^{k+1}_1 = L_1(x^{k}_1, \dots, x^{k}_m)$}
		\STATE{$x^{k+1}_2 = L_2(x^{k+1}_1, x^k_2, \dots, x^{k}_m)$}
		\STATE{$\vdots$}
		\STATE{$x^{k+1}_{i} = L_i(x^{k+1}_1, \dots, x^{k+1}_{i-1}, x^{k}_{i}, \dots, x^{k}_m, u^{k+1}_{1})$}
		\STATE{$\vdots$}
		\STATE{$x^{k+1}_{\tilde i} = L_{\tilde i}(x^{k+1}_1, \dots, x^{k+1}_{\tilde i -1}, x^{k}_{\tilde i}, \dots, x^{k}_m, u^{k+1}_{n})$}
		\STATE{$\vdots$}
		\STATE{$x^{k+1}_m = L_m(x^{k+1}_1, \dots, x^{k+1}_{m-1}, x^{k}_m)$}
		\ENDFOR
	\end{algorithmic}
\end{algorithm}

First, we prove that cyclic permutation implies shift equivalence. 
Without loss of generality, we can express algorithm $\mathcal A$
in the general form as \cref{algo15}.
Since $\mathcal A$ is an linear algorithm, $L_1, \dots, L_m$ are linear functions.
Given an initialization $\{x^{0}_1, \dots, x^{0}_m\}$,
$\mathcal{A}$ generates
state sequence $(x^k_1, \ldots, x^k_m)_{k\ge 0}$,
input sequence $(u^k_1, \ldots,u^k_n)_{k\ge 1}$,
and output sequence $(y^k_1, \ldots, y^k_n)_{k\ge 1}$.
The $i$th update equation is the first update equation that
contains an oracle call, corresponding to $u^k_1$ and $y^k_1$.
The $\tilde i$th update equation is the last update equation that
contains an oracle call, corresponding to $u^k_n$ and $y^k_n$.
The outputs are also linear functions of the states.
Specifically, we have
\begin{displaymath}
	\begin{aligned}
		y^k_1 = & Y_1(x^{k+1}_1, \dots, x^{k+1}_{i-1}, x^{k}_{i}, \dots, x^{k}_m),\ \ u^k_1 = \phi_1(y^k_1)\\
		& \vdots \\
		y^k_n = & Y_n(x^{k+1}_1, \dots, x^{k+1}_{\tilde i-1}, x^{k}_{\tilde i}, \dots, x^{k}_m),\ \ u^k_n = \phi_n(y^k_n).
	\end{aligned}
\end{displaymath}
Functions $Y_1, \dots, Y_n$ are linear functions and $\phi_1, \dots, \phi_n$ denote the oracle calls.
Without loss of generality, suppose permutation $\tilde \pi = (\tilde l+1,\ldots, m, 1, \ldots , \tilde l)$
with $1 < \tilde l < m$.

First case. Suppose the new order of oracle calls within one iteration
is a cyclic permutation $\pi$ of $(n)$ (not identical to $(n)$).
Without loss of generality, suppose $\pi = (j+1,\ldots, n, 1, \ldots , j)$ with $1 < j < n$,
the $j$th oracle call corresponds to the $\tilde j$th update equation,
and the $j+1$th oracle call corresponds to the $\tilde p$th update equation.
By definition, we have $i \le \tilde j < \tilde l +1 \le \tilde p \le \tilde i$.
At the first time step $k = 1$, the first input is $u^1_1$ and the first output is $y^1_1$,
and the $j+1$th input and output are $u^1_{j+1}$ and $y^1_{j+1}$.
We have
\begin{displaymath}
	\begin{aligned}
		y^1_1 & = Y_1(x^{1}_1, \dots, x^{1}_{i-1}, x^{0}_{i}, \dots, x^{0}_m) \\
		x^1_{\tilde l+1} & = L_{\tilde l+1}(x^{1}_1, \dots, x^{1}_{\tilde l}, x^{0}_{\tilde l+1}, \dots, x^{0}_m) \\
		y^1_{j+1} & = Y_1(x^{1}_1, \dots, x^{1}_{\tilde p-1}, x^{0}_{\tilde p}, \dots, x^{0}_m).
	\end{aligned}
\end{displaymath}
Here without loss of generality, suppose the $\tilde l+1$th update equation does not contain an oracle call.
In other words, $\tilde j < \tilde l +1 < \tilde p$.
By definition, $\mathcal{B}$ calls the update equations in the order $\tilde \pi$.
At the first time step, the $\tilde l +1$th update equation is first called.
If $\mathcal{B}$ is suitably initialized
with states $\{x^{1}_1, \dots, x^{1}_{\tilde l}, x^{0}_{\tilde l+1}, \dots, x^{0}_m\}$,
it will generate
state sequence $(x^{k}_{\tilde l+1}, \ldots, x^k_m, x^{k+1}_1, \dots, x^{k+1}_{\tilde l})_{k\ge 0}$,
input sequence
$(u^k_{j+1},\ldots,u^k_n, u^{k+1}_1, \ldots, u^{k+1}_{j})_{k\ge 1}$,
and output sequence $(y^k_{j+1},\ldots, y^k_n, y^{k+1}_1, \ldots, y^{k+1}_{j})_{k\ge 1}$.
The input and output sequences of $\mathcal{A}$ and $\mathcal{B}$ match up to prefixes
$(u^1_1, \ldots, u^1_{j})$ and $(y^1_1, \ldots, y^{1}_j)$ respectively.
Therefore, $\mathcal{A}$ and $\mathcal{B}$ are shift-equivalent.

Second case. Suppose the order of oracle calls within one iteration remain unchanged (identical to $(n)$).
By definition, we have $1 < \tilde l +1 \le i$.
We have
\begin{displaymath}
	\begin{aligned}
		x^1_{\tilde l+1} & = L_{\tilde l+1}(x^{1}_1, \dots, x^{1}_{\tilde l}, x^{0}_{\tilde l+1}, \dots, x^{0}_m) \\
		y^1_1 & = Y_1(x^{1}_1, \dots, x^{1}_{i-1}, x^{0}_{i}, \dots, x^{0}_m).
	\end{aligned}
\end{displaymath}
Here without loss of generality, suppose the $\tilde l+1$th update equation does not contain an oracle call.
In other words, $\tilde l +1 < i$.
By definition, $\mathcal{B}$ calls the update equations in the order $\tilde \pi$.
At the first time step, the $\tilde l +1$th update equation is first called.
If $\mathcal{B}$ is suitably initialized
with states $\{x^{1}_1, \dots, x^{1}_{\tilde l}, x^{0}_{\tilde l+1}, \dots, x^{0}_m\}$,
it will generate
state sequence $(x^{k}_{\tilde l+1}, \ldots, x^k_m, x^{k+1}_1, \dots, x^{k+1}_{\tilde l})_{k\ge 0}$.
The input and output sequences remain unchanged.
Therefore, $\mathcal{A}$ and $\mathcal{B}$ are oracle-equivalent.
Meanwhile, since oracle equivalence can be regarded as a special case of shift equivalence,
$\mathcal{A}$ and $\mathcal{B}$ are also shift-equivalent.

Next, we prove that shift equivalence implies cyclic permutation.
Suppose algorithms $\mathcal A$ and $\mathcal B$ are shift-equivalent.
If they are also oracle-equivalent, then they can be written using the same set of update equations, which is trivially related by a cyclic permutation (where the permutation is the identity).
Now suppose they are not oracle-equivalent.
Let $(u^k_1, \dots, u^k_{m})_{k \ge 0}$ and
$(\tilde u^k_1, \dots, \tilde u^k_{m})_{k \ge 0}$ be
the input sequences for $\mathcal A$ and $\mathcal B$.
The input sequences match up to a non-empty prefix.
Without loss of generality, suppose the length of this prefix is $q$: that is,
if we remove a prefix of length $q$ from the input
sequence of $\mathcal A$, then $\mathcal A$ and $\mathcal B$ have the same input sequence.
Recall we only need to consider the case $q < m$. If $q > m$, it is equivalent to consider
$q = q \bmod m$.
Comparing the input sequences of $\mathcal A$ and $\mathcal B$, and using the prefix length $q$,
we can write $(u^{k-1}_{q+1}, \dots, u^{k-1}_{m}, u^k_{1}, \dots, u^k_{q})=(\tilde u^k_1, \dots, \tilde u^k_{m})$
for $k \ge 1$.
The output sequences of $\mathcal A$ and $\mathcal B$ have the same relation.
Therefore, $\mathcal B$ and this shifted version of $\mathcal A$ are oracle equivalent,
and so we can write $\mathcal B$ and this shifted version of $\mathcal A$
using the same set of update equations. To undo the shift of $\mathcal A$,
we simply move the first $q$ update equations to the end of the algorithm.

\section{Proof of proposition 7.3}\label{proof-shiftequivalence}

The state-space realization of $\mathcal A$ corresponds to the state update equations
\begin{equation}\label{eq9}
	\begin{aligned}
		x^{k+1} & = Ax^k + B_1\bar{u}^k_1 + B_2\bar{u}^k_2 \\
		\bar{y}^k_1 & = C_1x^k + D_{11}\bar{u}^k_1 + D_{12}\bar{u}^k_2 \\
		\bar{y}^k_2 & = C_2x^k + D_{21}\bar{u}^k_1 + D_{22}\bar{u}^k_2.
	\end{aligned}
\end{equation}
Sufficiency. We will derive the state-space realization of $P_{\pi}\mathcal{A}$:
\begin{equation*}
	\left[\begin{array}{c c :c c}
		A & B_1 & 0 & B_2 \\
		0 & 0 & I & 0\\
		\hdashline
		C_1A & C_1B_1 & D_{11} & C_1B_2 \\
		C_2 & D_{21} & 0 & D_{22}
	\end{array}\right].
\end{equation*}
To verify this realization is correct,
we can write the system equations of this state-space realization as
\begin{equation}\label{eq13}
	\begin{aligned}
		x^{k+1} & = Ax^k + B_1\bar{u}^k_1 + B_2\bar{u}^k_2 \\
		\bar{u}^{k+1}_1 & = \bar{u}^{k+1}_1 \\
		\bar{y}^{k+1}_1 & = C_1Ax^k + C_1B_1\bar{u}^k_1 +D_{11}\bar{u}^{k+1}_1 +  C_1B_2\bar{u}^k_2 \\
		\bar{y}^k_2 & = C_2x^k + D_{21}\bar{u}^k_1 + D_{12}\bar{u}^k_2. \\
	\end{aligned}
\end{equation}
Note that equations \cref{eq13} are the results of equations \cref{eq9} after
applying permutation $\pi$.
As we perform cyclic permutation $\pi$, within each iteration,
the update order of the oracles is shifted as $(j+1,\ldots, n, 1,\ldots, j)$,
indicating oracles $(j+1, \ldots, n)$ are updated before $(1, \ldots, j)$.
Further, the input and output sequences within one iteration at time step $k$
become $(\bar{u}^k_{2}, \bar{u}^{k+1}_1)$ and $(\bar{y}^k_{2}, \bar{y}^{k+1}_1)$.
From the state-space realization, we may compute the transfer function as
\begin{equation}\label{eq14b}
	\hat H_{\mathcal{B}}(z) = \left[\begin{array}{c c}
		C_1(zI - A)^{-1}B_1 + D_{11} & zC_1(zI - A)^{-1}B_2 \\
		C_2(zI - A)^{-1}B_1/z + D_{21}/z & C_2(zI - A)^{-1}B_2 + D_{22} \\
	\end{array}\right] = \left[\begin{array}{c c}
		\hat H_{11}(z) & z\hat H_{12}(z) \\
		\hat H_{21}(z)/z & \hat H_{22}(z) \\
	\end{array}\right].
\end{equation}
To arrive at \cref{eq14b}, we have used the fact that $D_{12} = 0$ by assumption, and
\begin{equation*}
	\left( zI - \left[\begin{array}{c c}
		A & B_1 \\
		0 & 0 \\
	\end{array}\right]\right)^{-1} = \left[\begin{array}{c c}
		(zI - A)^{-1} & \frac{1}{z}(zI - A)^{-1}B_1 \\
		0 &  \frac{1}{z}I \\
	\end{array}\right].
\end{equation*}

Necessity is provided by \cref{prop1}.
Equivalent algorithms must have identical transfer functions.
Thus, if we find an algorithm and its transfer function is the same as \cref{eq11},
it must be equivalent to $\mathcal{B}$.

\section{Discussions on permutation and its generalization}\label{timedelay}
To take a revisit of \cref{prop4},
it can be found that as algorithm $\mathcal{A}$ is permuted
to make the order of oracle calls within one iteration
as $(j+1,\ldots, n, 1,\ldots, j)$ from $(1,\ldots, n)$,
the resulting transfer function is exactly the same as adding a one-step time delay
to channels (oracles) $(1,\ldots, j)$ according to results in control theory.
Another interpretation of adding a one-step time delay comes from the system equations \cref{eq13}.
We can see that the input and output corresponding to channels (oracles) $(1,\ldots, j)$ are
the input and output for the next time step $\bar{u}^{k+1}_1$ and $\bar{y}^{k+1}_1$,
however, the input and output of channels (oracles) $(j+1, \ldots, n)$ are still
the ones for the current time step $\bar{u}^{k}_2$ and $\bar{y}^{k}_2$.
Intrinsically, after cyclic permutation,
the intrinsic update order of oracles does not change,
but a one-step time delay is added to the oracles
that we would like to update latterly.

Using the idea of time delay,
we can generalize algorithm permutation as
adding any step of time delay to any channel (oracle) of an algorithm.
Suppose we add time delay to oracle $i$ of algorithm $\mathcal{A}$ by $d_i$ for any $i \in (n) $,
where $d_i$ can be any integer,
the resulting algorithm $\mathcal{B}$ has transfer function $\hat H_{\mathcal{B}}(z)$ as
\begin{equation}\label{eq16}
	\hat H_{\mathcal{B}}(z) = \begin{bmatrix}
		z^{d_1}&  &  & \\
		& z^{d_2} & & \\
		&  & \ddots  & \\
		&  &  & z^{d_n}
	\end{bmatrix} \hat H_{\mathcal{A}}(z) \begin{bmatrix}
		z^{-d_1}&  &  & \\
		& z^{-d_2} & & \\
		&  & \ddots  & \\
		&  &  & z^{-d_n}
	\end{bmatrix},
\end{equation}
where $\hat H_{\mathcal{A}}(z)$ is the transfer function of $\mathcal{A}$.

To be more specific, suppose we add time delay $d_i$
to oracle $i$ for algorithm $\mathcal{A}$,
$h^{kl}_{\mathcal{A}}(z)$ with $1\leq k \leq n$ and $1\leq l \leq n$
denotes the entry of $\hat H_{\mathcal{A}}(z)$.
The transfer function of the resulting algorithm $\mathcal{B}$
can be expressed entrywise as
\begin{equation}\label{eq17}
	h^{kl}_{\mathcal{B}}(z) = \left\{\begin{matrix*}[l]
		h^{ii}_{\mathcal{A}}(z) & k=i\ l=i\\
		h^{il}_{\mathcal{A}}(z)z^{d_i} & k=i\ l\neq i\\
		h^{ki}_{\mathcal{A}}(z)z^{-d_i}& k\neq i\ l=i\\
		h^{kl}_{\mathcal{A}}(z) & k\neq i \ l\neq i
	\end{matrix*}\right.
\end{equation}
In this way, we know that \cref{prop4} is a special case of \cref{eq16}
with $d_1 = \dots = d_{j} = 1$.

However, there are restrictions so that we cannot add
any arbitrary step of time delay to any oracle.
From \cref{control}, transfer functions are rational (matrix) functions
with respect to $z$.
Further, the rational functions must be proper in order to make the transfer function realizable.
From \cref{eq17}, as we add time delay $d_i$ to oracle $i$ for $\mathcal{A}$,
the off-diagonal entries in the $i$th row of $\hat H_{\mathcal{A}}(z)$
are multiplied by $z^{d_i}$
and the off-diagonal entries in the $i$th column of $\hat H_{\mathcal{A}}(z)$
are multiplied by $z^{-d_i}$
while the $i$th diagonal entry remains unchanged.
From the perspective of relative degrees,
as relative degree is the difference
between the degree of denominator and the degree of numerator,
the relative degrees of
the off-diagonal entries in the $i$th row are decreased by $d_i$
but the relative degrees of the off-diagonal entries
in the $i$th column are increased by $d_i$.
Suppose the smallest relative degree
among the off-diagonal entries in the $i$th row is $r_i$,
then $d_i$ must satisfy $d_i \leq r_i$ to
maintain properness of the resulting off-diagonal entries in the $i$th row.
Similarly, suppose the smallest relative degree
among the off-diagonal entries of the $i$th column is $c_i$,
then $d_i$ must satisfy $-d_i \leq c_i$ to
maintain properness of the resulting off-diagonal entries in the $i$th column.
In other words, we can add time delay $d_i$ to oracle $i$
only if $-c_i \leq d_i \leq r_i$.
Otherwise, at least one off-diagonal entry
in the $i$th row or the $i$th column is no longer proper,
leading to an invalid transfer function.

For any algorithm with state-space realization $(A, B, C, D)$,
the transfer function is calculated by $C(zI-A)^{-1}B+D$.
Term $C(zI-A)^{-1}B$ is a strictly proper (matrix) function,
where strictly proper means that
the degree of $z$ in the numerator polynomial
is strictly less than the degree of $z$ in the denominator polynomial.
Thus, for any nonzero entry of $D$,
the corresponding entry in the transfer function has relative degree zero.
Take a revisit of cyclic permutation,
for any causal algorithm,
the entries above diagonal of the $D$ matrix must be zero,
especially after necessary reordering.
Thus, the entries above diagonal in the transfer function
have strictly positive relative degrees.
This implies that any cyclic permutation of an algorithm always exists.
Note that before performing cyclic permutation,
we are required to reorder the state-space realization if needed.

Reconsider \cref{algo7,algo8},
in \cref{eq19} and \cref{eq20}, comparing $\hat H_{10}(z)$ to $\hat H_{11}(z)$,
we add a one-step time delay to the first channel.
Term $\frac{1}{z-1}$ in $\hat H_{10}(z)$ is multiplied by $z$ and
term $\frac{2z-1}{z-1}$ is multiplied by $z^{-1}$.
Further, the off-diagonal entry in the first row of $\hat H_{10}(z)$ has relative degree 1
and the off-diagonal entry in the first column of $\hat H_{10}(z)$ has relative degree 0.
Thus, we can only add time delay $d_1 = 1$ to the first oracle of \cref{algo7},
as $ 0 \leq d_1 \leq 1$ to maintain properness.

\section{Proof of proposition 7.4}\label{proof-shiftfixedpoint}

Partition the oracle calls of algorithm $\mathcal A: \mathcal{X} \to \mathcal{X}$ 
into two (nonlinear) oracles $\phi_1$ and $\phi_2$. %: \mathcal X \to \mathcal X$
Formally, write the update equations as
\begin{equation}\label{eq38}
	\begin{aligned}
		x^\star & = Ax^\star + B_1\bar{u}^\star_1 + B_2\bar{u}^\star_2 \\
		\bar{y}^\star_1 & = C_1x^\star + D_{11}\bar{u}^\star_1 + D_{12}\bar{u}^\star_2 \\
		\bar{y}^\star_2 & = C_2x^\star + D_{21}\bar{u}^\star_1 + D_{22}\bar{u}^\star_2 \\
		u^\star_1 & = \phi_1(y^\star_1)\\
		u^\star_2 & = \phi_2(y^\star_2).
	\end{aligned}
\end{equation}
Here the cyclic permutation $\pi$ swaps the first and second set of oracle calls.
Then the cyclic permutation $P_{\pi}\mathcal{A}$ converges to fixed point
$(\bar{y}_2^\star, \bar{y}_1^\star, \bar{u}_2^\star, \bar{u}_1^\star, x^\star)$.
To verify this, since $D_{12} = 0$, plugging in the fixed point conditions \cref{eq38} to
the system equations of the shifted algorithm \cref{eq13}, we have
\begin{equation*}
	\begin{aligned}
		x^\star & = Ax^\star + B_1\bar{u}^\star_1 + B_2\bar{u}^\star_2 \\
		\bar{u}^\star_1 & = \bar{u}^\star_1 \\
		\bar{y}^\star_1 & = C_1Ax^\star + C_1B_1\bar{u}^\star_1 +D_{11}\bar{u}^\star_1 +  C_1B_2\bar{u}^\star_2 = C_1x^\star + D_{11}\bar{u}^\star_1\\
		\bar{y}^\star_2 & = C_2x^\star + D_{21}\bar{u}^\star_1 + D_{12}\bar{u}^\star_2 \\
		u^\star_1 & = \phi_1(y^\star_1)\\
		u^\star_2 & = \phi_2(y^\star_2).
	\end{aligned}
\end{equation*}
This completes the proof. 

\section{Proof of shift-equivalence of DR and ADMM continued}\label{dradmmsubgradient}
Suppose the oracles for both DR (\cref{algo7}) and ADMM (\cref{algo8})
are subgradients of $f$ and $g$.
Oracles prox and argmin can be expanded as inclusions involving subgradients.
The update equations of DR and ADMM can be rewritten into formations of
\cref{algo16,algo17} respectively.
Note that the update equations involving subgradients are inclusions. 

\renewcommand{\thealgorithm}{H.\arabic{algorithm}}
\setcounter{algorithm}{0}
\vspace{-1em}
\noindent
\hfil
\begin{minipage}[t]{0.47\textwidth}
	\begin{algorithm}[H]
		\centering
		\caption{DR}
		\label{algo16}
		\begin{algorithmic}
			\FOR{$k=0, 1, 2,\ldots$}
			\STATE{$x^{k+1}_1 \in x^k_3 - t\partial f(x^{k+1}_1)$}
			\STATE{$x^{k+1}_2 \in 2x^{k+1}_1 - x^k_3 - tL^T\partial g(Lx^{k+1}_2)$}
			\STATE{$x^{k+1}_3 = x^k_3 + x^{k+1}_2 - x^{k+1}_1$}
			\ENDFOR
		\end{algorithmic}
	\end{algorithm}
\end{minipage}
\hfil
\begin{minipage}[t]{0.47\textwidth}
	\begin{algorithm}[H]
		\centering
		\caption{ADMM}
		\label{algo17}
		\begin{algorithmic}
			\FOR{$k=0, 1, 2,\ldots$}
			\STATE{$\xi^{k+1}_1 \in L\xi^k_2 - L\xi^k_3 - \frac{1}{\rho}LL^T\partial g(L\xi^{k+1}_1)$}
			\STATE{$\xi^{k+1}_2 \in L^{-1}\xi^{k+1}_1 + \xi^k_3 - \frac{1}{\rho}\partial f(\xi^{k+1}_2) $}
			\STATE{$\xi^{k+1}_3 = \xi^{k}_3 + L^{-1}\xi^{k+1}_1 - \xi^{k+1}_2$}
			\ENDFOR
		\end{algorithmic}
	\end{algorithm}
\end{minipage}
\hfil
\vspace{1em}

We still assume $\rho = 1/t$ in ADMM.
The transfer functions are computed as $\hat H_{18}(z)$ and $\hat H_{19}(z)$ respectively.
Note that $\hat H_{19}(z)$ is not written in the causal order.
\begin{displaymath}
	\hat H_{18}(z) = \left[\begin{array}{ c c c | c c }
		0 & 0 & I & -tI & 0 \\
		0 & 0 & I & -2tI & -tL^T \\
		0 & 0 & I & -tI & -tL^T \\
		\hline
		0 & 0 & I & -tI & 0 \\
		0 & 0 & L & -2tL & -tLL^T \\
	\end{array}\right] = \left[\begin{array}{ c c }
		-\frac{tz}{z-1}I & -\frac{t}{z-1}L^T  \\
		\frac{t - 2tz}{z-1}L & -\frac{tz}{z-1}LL^T  \\
	\end{array}\right]
\end{displaymath}
\begin{displaymath}
	\hat H_{19}(z) = \left[\begin{array}{ c c c | c c }
		0 & L & -L & 0 & -tLL^T \\
		0 & I & 0 & -tI & -tL^T \\
		0 & 0 & 0 & tI & 0 \\
		\hline
		0 & I & 0 & -tI & -tL^T \\
		0 & L & -L & 0 & -tLL^T \\
	\end{array}\right] =  \left[\begin{array}{ c c }
		-\frac{tz}{z-1}I & -\frac{tz}{z-1}L^T  \\
		\frac{t - 2tz}{z(z-1)}L & -\frac{tz}{z-1}LL^T  \\
	\end{array}\right]
\end{displaymath}
From \cref{prop3,prop4},
we know that they are still shift-equivalent.

\section{Proof of proposition 8.1}\label{proof-repetition}

Sufficiency. The update equations of $\mathcal{B}$ can be written as
\begin{equation*}
	\begin{aligned}
		x^k_1 & = Ax^{k}_\mathcal{B}+Bu^k_1 \\
		y^k_1 & = Cx^{k}_\mathcal{B}+Du^{k}_1 \\
		x^{k+1}_\mathcal{B} & = Ax^k_1+Bu^k_2 \\
		y^k_2 & = Cx^{k}_1+Du^{k}_2,
	\end{aligned}
\end{equation*}
where $x^k_1$ is an intermediate state.
Eliminating the intermediate state $x^k_1$,
we arrive at the new update equations:
\begin{equation*}
	\begin{aligned}
		x^{k+1}_\mathcal{B} & = A^2x^{k}_\mathcal{B}+ABu^k_1+Bu^k_2 \\
		y^k_1 & = Cx^{k}_\mathcal{B}+Du^{k}_1 \\
		y^k_2 & = CAx^{k}_\mathcal{B} + CB u^k_1 +Du^{k}_2.
	\end{aligned}
\end{equation*}
The corresponding state-space realization has transfer function
\begin{equation*}
	\left[\begin{array}{c|c c}
		A^2 & AB & B  \\
		\hline
		C & D & 0 \\
		CA & CB & D
	\end{array}\right] =
	\left[\begin{array}{c c}
		C(zI - A^2)^{-1}AB +D & C(zI - A^2)^{-1}B \\
		CA(zI - A^2)^{-1}AB+CB & CA(zI - A^2)^{-1}B+D
	\end{array}\right].
\end{equation*}

Necessity is provided by \cref{prop1} since the transfer function uniquely
characterizes an equivalence class of algorithms.
% Here proposition 6.1 is applied to $\mathcal B$ and any other algorithms
% with the same transfer function as $\mathcal B$.
% We know that any algorithm that has the same transfer function as $\mathcal B$
% is oracle-equivalent to $\mathcal B$ and also a repetition of $\mathcal A$.

\section{Proof of proposition 8.3}\label{proof-repetitionfixedpoint}
Suppose the oracles of algorithm $\mathcal{A}: \mathcal{X} \to \mathcal{X}$
can be represented as $\phi: \mathcal{X} \to \mathcal{X}$.
Since $\mathcal{A}$ converges to fixed point $(y^\star, u^\star, x^\star)$, it satisfies
\begin{align*}\label{eq40}
	x^\star & = A x^\star + B u^\star \\
	y^\star & = C x^\star + D u^\star \\
	u^\star & = \phi (y^\star).
\end{align*}
Therefore, we have
\begin{displaymath}
	\begin{aligned}
		x^\star & = A x^\star + B u^\star \\
		& = A(A x^\star + B u^\star) + B u^\star \\
		& = A^2 x^\star + ABu^\star + Bu^\star \\
		& = \dots \\
		& = A^{n-1} x^\star + A^{n-2}B u^\star + \dots + AB u^\star + B u^\star \\
		& = A^n x^\star + A^{n-1}B u^\star + \dots + AB u^\star + B u^\star \\
		y^\star & = C x^\star + D u^\star \\
		& = C(A x^\star + B u^\star) + D u^\star \\
		& = CA x^\star + CBu^\star + Du^\star \\
		& = \dots \\
		& = CA^{n-1} x^\star + CA^{n-2}B u^\star + \dots + CB u^\star + D u^\star .
	\end{aligned}
\end{displaymath}
With \cref{eq25}, we have
\begin{align*}\label{eq41}
	x^\star & = A^n x^\star + A^{n-1}B u^\star + \dots + AB u^\star + B u^\star \\
	y^\star & = C x^\star + D u^\star \\
	y^\star & = CAx^\star+ CBu^\star + Du^\star \\
	\vdots & \\
	y^\star & = CA^{n-1}x^\star+ CA^{n-2}Bu^\star +  \dots  CBu^\star + Du^\star,
\end{align*}
which indicates that $\mathcal{A}^n$ converges to fixed point $(y', u', x^\star)$
with $y' = y^\star \bigotimes \mathbbm{1}^n$ and $u' = u^\star \bigotimes \mathbbm{1}^n$.

\section{Proof of proposition 9.3}\label{proof-conjugation}
Without loss of generality, let the permutation matrix equal to the identity as \cref{prop8}.
To simplify the notations, let
\begin{displaymath}
	\left[
	\begin{array}{c:c c}
		A & B[[n], \kappa] & B[[n], \bar{\kappa}] \\
		\hdashline
		C[\kappa, [n]] & D[\kappa] & D[\kappa, \bar{\kappa}] \\
		C[\bar{\kappa}, [n]] & D[\bar{\kappa}, \kappa] & D[\bar{\kappa}]
	\end{array}
	\right] = \left[\begin{array}{c:c c}
		A & B_1 & B_2 \\
		\hdashline
		C_1 & D_{11} & D_{12} \\
		C_2 & D_{21} & D_{22}
	\end{array}\right],
\end{displaymath}
\begin{displaymath}
	\begin{bmatrix}
		\hat H[\kappa](z) & \hat H[\kappa, \bar{\kappa}](z)\\
		\hat H[\bar{\kappa}, \kappa](z) & \hat H[\bar{\kappa}](z)
	\end{bmatrix} =  \left[\begin{array}{c c}
		\hat H_{11}(z) & \hat H_{12}(z) \\
		\hat H_{21}(z) & \hat H_{22}(z) \\
	\end{array}\right]=\left[\begin{array}{c c}
		C_1(zI - A)^{-1}B_1 + D_{11} & C_1(zI - A)^{-1}B_2 + D_{12} \\
		C_2(zI - A)^{-1}B_1 + D_{21} & C_2(zI - A)^{-1}B_2 + D_{22} \\
	\end{array}\right].
\end{displaymath}
In this way, $(y[\kappa]^\star, y[\bar{\kappa}]^\star, u[\kappa]^\star, u[\bar{\kappa}]^\star, x^\star)$
can be written as $(y_1^\star, y_2^\star, u_1^\star, u_2^\star, x^\star)$, and
$(u[\kappa]^\star, y[\bar{\kappa}]^\star, y[\kappa]^\star, u[\bar{\kappa}]^\star, x^\star)$ can be written as
$(u_1^\star, y_2^\star, y_1^\star, u_2^\star, x^\star)$.

Partition the oracle calls of algorithm $\mathcal{A}: \mathcal{X} \to \mathcal{X}$
into two nonlinear oracles $\phi_1$ and $\phi_2$.
Oracle $\phi_1$ corresponds to the oracle calls in set $\kappa$,
and $\phi_2$ corresponds to the remaining oracle calls.
Since $\mathcal{A}$ converges to fixed point $(y_1^\star, y_2^\star, u_1^\star, u_2^\star, x^\star)$,
it satisfies
\begin{align*}
	x^\star & = Ax^\star + B_1u^\star_1 + B_2u^\star_2 \\
	y^\star_1 & = C_1x^\star + D_{11}u^\star_1 + D_{12}u^\star_2 \\
	y^\star_2 & = C_2x^\star + D_{21}u^\star_1 + D_{22}u^\star_2 \\
	u^\star_1 & = \phi_1 (y^\star_1) \\
	u^\star_2 & = \phi_2 (y^\star_2).
\end{align*}
The state-space realization of $\C_{\kappa}\mathcal A$ is the same as \cref{eq31}. Note that $D_{11}$ is invertible, we have
\begin{align*}
	x^\star & = Ax^\star + B_2u^\star_2 + B_1u^\star_1 \\
	& = Ax^\star + B_2u^\star_2 + B_1(- D_{11}^{-1}C_1x^\star + D_{11}^{-1}y^\star_1 - D_{11}^{-1}D_{12}u^\star_2)\\
	& = (A - B_1D_{11}^{-1}C_1)x^\star + B_1D_{11}^{-1}y^\star_1 + (B_2-B_1D_{11}^{-1}D_{12})u^\star_2 \\
	u^\star_1 & = -D_{11}^{-1}C_1x^\star + D_{11}^{-1}y^\star_1 - D_{11}^{-1}D_{12}u^\star_2 \\
	y^\star_2 & = C_2x^\star + D_{22} u^\star_2 + D_{21}u^\star_2\\
	& = C_2x^\star + D_{22}u^\star_2 + D_{21}(D_{11}^{-1}y^\star_1- D_{11}^{-1}C_1x^\star - D_{11}^{-1}D_{12}u^\star_2 ) \\
	& = (C_2 - D_{21}D_{11}^{-1}C_1)x^\star + D_{21}D_{11}^{-1}y^\star_1 + (D_{22}- D_{21}D_{11}^{-1}D_{12})u^\star_2 \\
	y^\star_1 & = \phi_1^{-1} (u^\star_1) \\
	u^\star_2 & = \phi_2 (y^\star_2).
\end{align*}
Oracle $\phi_1^{-1}$ is the inverse oracle of oracle $\phi_1$.
Therefore, we get the desired results that
algorithm $\C_{\kappa}\mathcal A$ converges to fixed point $(u_1^\star, y_2^\star, y_1^\star, u_2^\star, x^\star)$.

\section{Commutativity between conjugation and cyclic permutation}\label{proof-commutative}
\begin{proposition}\label{prop11}
	Conjugation and cyclic permutation commute.
\end{proposition}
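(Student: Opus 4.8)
The plan is to reduce the statement to an algebraic identity about transfer matrices, since by \cref{prop1} transfer matrices characterize algorithms up to oracle equivalence and both orders of the two operations produce algorithms related to $\mathcal A$ by a permutation. By \cref{prop8}, conjugating the oracles in a set $\kappa$ replaces the transfer matrix $\hat H(z)$ by the partial inverse $\textnormal{Sweep}_\kappa \hat H(z)$, whose four blocks are given by \cref{eqp13}. By \cref{prop4} and its generalization \cref{eq16}, a cyclic permutation of the oracle calls replaces $\hat H(z)$ by $\Lambda(z)\hat H(z)\Lambda(z)^{-1}$, where $\Lambda(z) = \textnormal{diag}(z^{d_1},\dots,z^{d_n})$ is a diagonal matrix of monomials in $z$ (with $d_i\in\{0,1\}$ for a single cyclic shift; arbitrary integer delays in the generalized setting). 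So it suffices to prove that $\textnormal{Sweep}_\kappa$ commutes with conjugation by $\Lambda(z)$.

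First I would isolate the key lemma: for any invertible matrix $\Lambda$ that is block-diagonal with respect to the partition $(\kappa,\bar\kappa)$ — in particular for any diagonal $\Lambda$ — one has $\textnormal{Sweep}_\kappa(\Lambda M \Lambda^{-1}) = \Lambda\,(\textnormal{Sweep}_\kappa M)\,\Lambda^{-1}$. This is checked blockwise: writing $\Lambda = \textnormal{diag}(\Lambda_1,\Lambda_2)$ and $M = \sbmat{M_{11}&M_{12}\\M_{21}&M_{22}}$ conformally with $(\kappa,\bar\kappa)$, the factors $\Lambda_1$ and $\Lambda_2$ telescope through each of the four expressions in \cref{eqp13}: the $(1,1)$ block becomes $(\Lambda_1 M_{11}\Lambda_1^{-1})^{-1} = \Lambda_1 M_{11}^{-1}\Lambda_1^{-1}$, the $(1,2)$ block becomes $-\Lambda_1 M_{11}^{-1}M_{12}\Lambda_2^{-1}$, the $(2,1)$ block becomes $\Lambda_2 M_{21}M_{11}^{-1}\Lambda_1^{-1}$, and the $(2,2)$ block becomes $\Lambda_2(M_{22}-M_{21}M_{11}^{-1}M_{12})\Lambda_2^{-1}$, which together are exactly $\Lambda\,(\textnormal{Sweep}_\kappa M)\,\Lambda^{-1}$. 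Applying this with $M = \hat H(z)$ and $\Lambda = \Lambda(z)$ (diagonal, hence block-diagonal for every index partition, in particular for $(\kappa,\bar\kappa)$ after the permutation has relabeled the $\kappa$-channels) yields commutation of the transfer matrices, hence of the two operations on algorithms.

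The main obstacle I anticipate is not the block identity but the bookkeeping around it. Two points need care: (i) tracking how the cyclic permutation relabels the oracle index set, so that ``conjugate $\kappa$ then permute'' is correctly matched against ``permute then conjugate the image of $\kappa$'' in the transfer-function picture; and (ii) verifying that the invertibility hypotheses needed for each transformation survive the other — concretely, that $D[\kappa]$ (equivalently $\hat H[\kappa](z)$) stays invertible after the cyclic permutation, and that the permuted-then-conjugated algorithm is again causal so \cref{prop8} applies. Both should follow from the fact that a cyclic permutation only multiplies rows and columns of $\hat H$ by the units $z^{\pm 1}$, which does not affect invertibility of the $\kappa$-block as a rational matrix, together with the observation used throughout the paper that any algorithm written as a sequence of update equations is automatically causal. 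I would dispatch these preliminaries first, then invoke the block lemma to conclude.
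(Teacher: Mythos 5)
Your proposal is correct and follows essentially the same route as the paper's proof in \cref{apd6}: the paper likewise writes the cyclic permutation as conjugation by a diagonal matrix $Q=\diag(Q_1,Q_2)$ partitioned conformally with $(\kappa,\bar\kappa)$ and verifies blockwise that the four expressions of \cref{eqp13} telescope, yielding identical transfer functions for $\C_\kappa P_\pi\mathcal{A}$ and $P_\pi\C_\kappa\mathcal{A}$. Your packaging of the computation as a general lemma about the Sweep operator commuting with block-diagonal conjugation, and your explicit attention to index relabeling, invertibility of $D[\kappa]$, and causality, are minor refinements of the same argument rather than a different approach.
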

\begin{proof}
	Given an algorithm $\mathcal{A}$ with transfer function $\hat H(z)$.
	Suppose $\kappa$ is a subset of the oracles of $\mathcal{A}$, $D_{\kappa}$ is invertible,
	and $\pi = (m+1,\ldots, n, 1,\ldots, m)$ is an arbitrary cyclic permutation of
	the oracles of $\mathcal{A}$.
	We will show that the transfer functions of $\mathcal{C}_{\kappa}P_{\pi}\mathcal{A}$
	and $P_{\pi}\mathcal{C}_{\kappa}\mathcal{A}$ are identical.
	
	Suppose $\hat H^\star(z)$ is the transfer function of $P_{\pi}\mathcal{A}$, the results in \cref{prop4} can
	be written as
	\begin{equation}\label{eq56}
		\hat H^\star(z) = Q\hat H(z)Q^{-1}.
	\end{equation}
	Here $Q$ is a diagonal matrix where the first $m$ diagonal entries are all $z$ and the rest of the
	diagonal entries are all ones.
	We will use the same settings and notations as \cref{prop8} to express changes in transfer function
	of conjugation $\mathcal{C}_{\kappa}$.
	Without loss of generality,
	the transfer function $\hat H'(z)$ of $\mathcal{C}_{\kappa}\mathcal{A}$ satisfies
	\begin{equation}\label{eq57}
		\hat H'(z) = \left[\begin{array}{c c}
			\hat H_{11}^{-1}(z) & -\hat H_{11}^{-1}(z)\hat H_{12}(z) \\
			\hat H_{21}(z)\hat H_{11}^{-1}(z) & \hat H_{22}(z) - \hat H_{21}(z)\hat H_{11}^{-1}(z)\hat H_{12}(z)
		\end{array}\right].
	\end{equation}
	Thus we can partition matrix $Q$ as $\textnormal{diag}(Q_1, Q_2)$, where $Q_1$ corresponds to the oracles
	in $\kappa$ and $Q_2$ corresponds to the rest part of oracles. Consequently, $Q^{-1}$ can be written
	as $\textnormal{diag}({Q_1}^{-1}, {Q_2}^{-1})$.
	
	From \cref{eq56} and \cref{eq57}, we have
	
	\begin{displaymath}
		\begin{aligned}
			\hat H(z) & \xrightarrow{\mathcal{C}_{\kappa}} \left[\begin{array}{c c}
				\hat H_{11}^{-1}(z) & -\hat H_{11}^{-1}(z)\hat H_{12}(z) \\
				\hat H_{21}(z)\hat H_{11}^{-1}(z) & \hat H_{22}(z) - \hat H_{21}(z)\hat H_{11}^{-1}(z)\hat H_{12}(z)
			\end{array}\right] \\
			& \xrightarrow{P_{\pi}} \left[\begin{array}{c c}
				Q_1\hat H_{11}^{-1}(z)Q_1^{-1} & -Q_1\hat H_{11}^{-1}(z)\hat H_{12}(z)Q_2^{-1} \\
				Q_2\hat H_{21}(z)\hat H_{11}^{-1}(z)Q_1^{-1} & Q_2\hat H_{22}(z)Q_2^{-1} - Q_2\hat H_{21}(z)\hat H_{11}^{-1}(z)\hat H_{12}(z)Q_2^{-1}
			\end{array}\right], \\
			\hat H(z) & \xrightarrow{P_{\pi}} \left[\begin{array}{c c}
				Q_1\hat H_{11}(z)Q_1^{-1} & Q_1\hat H_{12}(z)Q_2^{-1} \\
				Q_2\hat H_{21}(z)Q_1^{-1} & Q_2\hat H_{22}(z)Q_2^{-1}
			\end{array}\right] \\
			& \xrightarrow{\mathcal{C}_{\kappa}} \left[\begin{array}{c c}
				Q_1\hat H_{11}^{-1}(z)Q_1^{-1} & -Q_1\hat H_{11}^{-1}(z)\hat H_{12}(z)Q_2^{-1} \\
				Q_2\hat H_{21}(z)\hat H_{11}^{-1}(z)Q_1^{-1} & Q_2\hat H_{22}(z)Q_2^{-1} - Q_2\hat H_{21}(z)\hat H_{11}^{-1}(z)\hat H_{12}(z)Q_2^{-1}
			\end{array}\right].
		\end{aligned}
	\end{displaymath}
	
	We get the desired results to show $\mathcal{C}_{\kappa}$ and $P_{\pi}$ commute. Therefore, conjugation
	and cyclic permutation commute.
\end{proof}

\section{Proof of (A.4) and (A.5)}\label{proof-oracles}
For each $i \in \{1, \dots, n\}$ we have
\begin{displaymath}
	z_i = \textnormal{argmin}_x \left\{ \lambda_i f_i(x) + \frac{1}{2}\left[\begin{array}{c} x \\ y_i \end{array} \right]^T
	\left[\begin{array}{cc}	Q_{11}^i & Q_{12}^i\\ Q_{21}^i & Q_{22}^i
	\end{array} \right] \left[\begin{array}{c}	x \\ y_i \end{array} \right]\right\}.
\end{displaymath}
Besides, $Q_{11}^i$ is invertible for any $i \in \{1, \dots, n\}$. Since $f_i$ is a convex function,
the argmin oracle can be written as $z_i \in -{Q_{11}^i}^{-1}\lambda \partial
f_i (z_i) - {Q_{11}^i}^{-1}Q_{12}^iy_i$ by treating $\partial f_i$ as the oracle.
Written into matrix form, we have
\begin{equation}\label{eq53}
	\bar{u}_1 = -{Q_1}^{-1}\lambda\tilde{u}_1 - {Q_1}^{-1}Q_2 \bar{y}_1,
\end{equation}
where $\tilde{u}_1 = [\partial f_1 (z_1), \dots, \partial f_n (z_n)]^T$.
Combine \cref{eq53} with the state-space realization \cref{eq54}, we get the desired results for \cref{eq34}.
The corresponding system equations show as
\small
\begin{displaymath}
	\begin{aligned}
		x^{k+1} & = (A - B_1(I + M_1D_{11})^{-1}M_1C_1)x^k -B_1(I + M_1D_{11})^{-1}M_2\tilde{u}^k_1 +
		(B_2- B_1(I + M_1D_{11})^{-1}M_1D_{12})\bar{u}^k_2 \\
		\bar{u}^k_1 & = -(I + M_1D_{11})^{-1}M_1C_1x^k -(I + M_1D_{11})^{-1}M_2\tilde{u}^k_1
		-(I + M_1D_{11})^{-1}M_1D_{12}\bar{u}^k_2 \\
		\bar{y}^k_2 & = (C_2-D_{21}(I + M_1D_{11})^{-1}M_1C_1)x^k -D_{21}(I + M_1D_{11})^{-1}M_2\tilde{u}^k_1
		+ (D_{22}-D_{21}(I + M_1D_{11})^{-1}M_1D_{12})\bar{u}^k_2.
	\end{aligned}
\end{displaymath}
\normalsize
To calculate the transfer function, note that
\small
\begin{displaymath}
	(zI - A + B_1(I+M_1D_{11})^{-1}M_1C_1)^{-1}
	= (zI -A)^{-1} - (zI -A)^{-1}B_1(I+M_1\hat H_{11}(z))^{-1}M_1C_1(zI-A)^{-1}.
\end{displaymath}
\normalsize
We have
\small
\begin{align*}
	\hat H'_{11}(z) & = (I + M_1D_{11})^{-1}M_1C_1(zI - A + B_1(I+M_1D_{11})^{-1}M_1C_1)^{-1}B_1(I + M_1D_{11})^{-1}M_2 -(I + M_1D_{11})^{-1}M_2\\
	& = (I + M_1D_{11})^{-1}(M_1\hat H_{11}(z) - M_1D_{11})(I - (I+M_1\hat H_{11}(z))^{-1}(M_1\hat H_{11}(z) - M_1D_{11}))(I + M_1D_{11})^{-1}M_2\\
	& -(I + M_1D_{11})^{-1}M_2 \\
	& = (I + M_1D_{11})^{-1}(M_1\hat H_{11}(z) - M_1D_{11})(I+M_1\hat H_{11}(z))^{-1}M_2-(I + M_1D_{11})^{-1}M_2 \\
	& = -(I + M_1\hat H_{11}(z))^{-1}M_2\\
	\hat H'_{12}(z) & = -(I + M_1D_{11})^{-1}M_1C_1(zI - A + B_1(I+M_1D_{11})^{-1}M_1C_1)^{-1}B_2 -(I + M_1\hat H_{11}(z))^{-1}M_1D_{12} \\
	& =-(I + M_1D_{11})^{-1}(I - (M_1\hat H_{11}(z) - M_1D_{11})(I+M_1\hat H_{11}(z))^{-1})(M_1\hat H_{12}(z) \\
	& - M_1D_{12}) -(I + M_1\hat H_{11}(z))^{-1}M_1D_{12} \\
	& = -(I + M_1\hat H_{11}(z))^{-1}(M_1\hat H_{12}(z) - M_1D_{12})-(I + M_1\hat H_{11}(z))^{-1}M_1D_{12} \\
	& = -(I + M_1 \hat H_{11}(z))^{-1}M_1\hat H_{12}(z)\\
	\hat H'_{21}(z) & = -C_2(zI - A + B_1(I+M_1D_{11})^{-1}M_1C_1)^{-1}B_1(I + M_1D_{11})^{-1}M_2 - D_{21}(I + M_1\hat H_{11}(z))^{-1}M_2 \\
	& = -(\hat H_{21}(z) - D_{21})(I - (I+M_1\hat H_{11}(z))^{-1}(M_1\hat H_{11}(z) - M_1D_{11}))(I + M_1D_{11})^{-1}M_2 \\
	& - D_{21}(I + M_1\hat H_{11}(z))^{-1}M_2 \\
	& = -(\hat H_{21}(z) - D_{21})(I + M_1\hat H_{11}(z))^{-1}M_2 - D_{21}(I + M_1\hat H_{11}(z))^{-1}M_2 \\
	& = -\hat H_{21}(z)(I +M_1\hat H_{11}(z))^{-1}M_2\\
	\hat H'_{22}(z) & = \hat H_{22}(z) - (\hat H_{21}(z)- D_{21})(I+M_1\hat H_{11}(z))^{-1}M_1(\hat H_{12}(z)- D_{12}) - D_{21}(I+M_1\hat H_{11}(z))^{-1}M_1(\hat H_{12}(z)- D_{12}) \\
	&- (\hat H_{21}(z)- D_{21})(I+M_1\hat H_{11}(z))^{-1}M_1D_{12}- D_{21}(I+M_1\hat H_{11}(z))^{-1}M_1D_{12} \\
	& = \hat H_{22}(z)-\hat H_{21}(z)(I +M_1\hat H_{11}(z))^{-1}M_1\hat H_{12}(z).
\end{align*}
\normalsize
Thus, we get the desired results as \cref{eq35}.

\end{document}